\numberwithin{equation}{section}
\theoremstyle{remark}
\newtheorem{rem}{\protect\remarkname}[section]
\theoremstyle{definition}
\newtheorem{defn}{\protect\definitionname}[section]
\theoremstyle{plain}
\newtheorem{lem}{\protect\lemmaname}[section]
\theoremstyle{plain}
\newtheorem{thm}{\protect\theoremname}[section]
\theoremstyle{plain}
\newtheorem{prop}{\protect\propositionname}[section]
\date{}
\providecommand{\definitionname}{Definition}
\providecommand{\lemmaname}{Lemma}
\providecommand{\propositionname}{Proposition}
\providecommand{\remarkname}{Remark}
\providecommand{\theoremname}{Theorem}
\begin{document}
\title{Lipschitz-stability of Controlled Rough Paths and Rough Differential
Equations}
\author{H. Boedihardjo\thanks{Department of Statistics, University of Warwick, Coventry, CV4 7AL,
United Kingdom. Email: horatio.boedihardjo@warwick.ac.uk.}$\ $ and X. Geng\thanks{School of Mathematics and Statistics, University of Melbourne, Parkville
VIC 3010, Australia. Email: xi.geng@unimelb.edu.au.}}
\maketitle
\begin{abstract}
We provide an account for the existence and uniqueness of solutions
to rough differential equations under the framework of controlled
rough paths. The case when the driving path is $\beta$-Hölder continuous,
for $\beta>1/3$, is widely available in the literature. In its extension
to the case when $\beta\leqslant1/3,$ a main challenge and missing
ingredient is to show that controlled roughs paths are closed under
composition with Lipschitz transformations. Establishing such a property
precisely, which has a strong algebraic nature, is a main purpose
of the present article.
\end{abstract}

\section{Introduction}

Multidimensional stochastic differential equations (SDEs) of the form
\begin{equation}
\mathrm{d}Y_{t}^{i}=\sum_{j=0}^{d}V_{j}(Y_{t})\mathrm{d}X_{t}^{j},\quad Y_{0}=y,\label{eq:SDE}
\end{equation}
where $X_{t}^{0}=t$, $(X_{t}^{j})_{j=1}^{d}$ is a $d$-dimensional
Brownian motion, and $(V_{j})_{j=0}^{d}$ are smooth vector fields
on $\mathbb{R}^{n}$, has been frequently used for modelling in mathematical
physics and finance (cf. \cite{Oks13} and the references therein).
The case when $V_{j}=0$ for all $j\geqslant0$ corresponds to ordinary
differential equations (ODEs). The SDE (\ref{eq:SDE}) also has applications
in pure mathematics. For instance, the distribution of its solution
can be used to study some second order linear parabolic and elliptic
differential equations, leading to probabilistic proofs of celebrated
results in PDE theory such as Hörmander's theorem (cf. Malliavin \cite{Mal78}).

When using Picard's iteration to establish the existence and uniqueness
of solutions to (\ref{eq:SDE}), the convergence of the iteration
is established under the $L^{2}$-norm with respect to the Wiener
measure. Partly inspired by the conjectures of H. Föllmer, Lyons \cite{Lyo98}
developed a \textit{pathwise} approach to construct the integral against
the ``$\mathrm{d}X_{t}^{j}$'s'' and showed the pathwise well-posedness
of the SDE. Lyons' pathwise estimates were performed through considering
the Brownian motion as an enhanced object by including the second
order structure given by an iterated integral process:
\[
\mathbf{X}_{s,t}=\big(X_{t}-X_{s},\int_{s<u_{1}<u_{2}<t}\mathrm{d}X_{u_{1}}\otimes\mathrm{d}X_{u_{2}}\big).
\]
In fact, given any function $(s,t)\rightarrow\mathbf{X}_{s,t}$ satisfying
certain algebraic and analytic conditions, a unique solution $Y$
to the equation (\ref{eq:SDE}) can be constructed in terms of $\mathbf{X}$,
so that the mapping $\mathbf{X}\rightarrow Y$ is continuous. Such
functions $\mathbf{X}$ are known as \textit{weakly geometric rough
paths}.

Lyons defined the solution for (\ref{eq:SDE}) effectively as 
\[
\mathbf{Y}_{s,t}=\big(Y_{t}-Y_{s},\int_{s<u_{1}<u_{2}<t}\mathrm{d}Y_{u_{1}}\otimes\mathrm{d}Y_{u_{2}}\big)
\]
so that the solution path $\mathbf{Y}$, like $\mathbf{X}$, is also
weakly geometric rough path. Lyons' rough path theory has an analytic
nature and goes way beyond the framework of Brownian motion. Later
on, Gubinelli \cite{Gub04} proposed an alternative way to interpret
the solution $Y$ as a \textit{controlled path}, which we will elaborate
below. The monograph of Friz and Hairer \cite{FH14} contains an excellent
exposition of the approach. Unlike the set of weakly geometric rough
paths, the set of controlled paths has a nice linear structure making
it a Banach space and some algebraic considerations are simplified
accordingly. Both \cite{Gub04,FH14} contains the complete theory
for the case when the Hölder exponenent $\beta$ of $\mathbf{X}$
is greater than $1/3$.

While for most parts it is commonly believed that the extension to
the case when $\beta<1/3$ is standard, the proofs and precise quantitative
estimates under the framework of controlled paths are not readily
available in the literature. Apart from this, there is an essential
ingredient whose extension to the case when $\beta<1/3$ is not obvious
at all. To be more specific, when formulating the differential equation
\begin{equation}
d{\cal Y}=F({\cal Y}){\rm d}{\bf X}\label{eq:RDEIntro}
\end{equation}
in the sense of controlled paths, one needs to prove that if $\mathcal{Y}$
is controlled by $\mathbf{X}$, then $F(\mathcal{Y})$ is also controlled
by ${\bf X}$ for suitably regular functions $F$. As we will see,
the challenge in this part has a strong algebraic nature that is not
similar to the usual H\"older regularity estimates. The ``geometric''
feature of ${\bf X}$ plays a critical role which is not needed in
the case when $\beta>1/3$. A major effort of the present article
is to develop this algebraic component carefully (cf. Section \ref{sec:Distortion-under-Lipschitz}
below). For completeness, we have also included a full proof towards
the well-posedness (existence, uniqueness and continuity) of the equation
(\ref{eq:RDEIntro}) under the framework of controlled paths. In our
modest opinion, having the controlled rough path framework properly
set-up in full generality along with the key quantitative estimates
may also be beneficial and convenient for the broader community.

Apart from Lyons' original approach and Gubinelli's controlled path
approach, there are numerous other approaches to study differential
equations driven by rough paths, some of which further develops the
idea of controlled paths (see for instance Davie \cite{Dav08}, Gubinelli
\cite{Gub10}, Hairer \cite{Hai14}, Lyons-Yang \cite{LY14}).

\vspace{2mm} \noindent \textbf{Organization.} The present article
is organized as follows. In Section 2, we recall the basic notions
of geometric rough paths and controlled rough paths. In Section 3,
we derive a H\"older estimate for controlled rough paths in terms
of the remainders. This estimate is needed for later purposes. In
Section 4, we prove the stability of controlled rough paths under
Lipschitz transformations. This part is a main ingredient of the present
article. In Sections 5 and 6, we study rough integration and rough
differential equations.

\section{Preliminary notions of rough paths}

We begin by recapturing some notions of geometric and controlled rough
paths over Banach spaces. This provides the basic framework on which
the present article is based.

\subsection{\label{subsec:GeoRP}Geometric rough paths}

Let $U$ and $V$ denote Banach spaces. The spaces $U$ and $V$ will
represent the space in which the paths $Y$ and $X$ in (\ref{eq:SDE})
take values respectively. A family of \textit{admissible tensor norms
}on $\big(V^{\otimes n}\big)_{n=1}^{\infty}$ (cf. Lyons-Qian \cite{LQ02})
is a family of norms, one for each of $V^{\otimes n}$, such that: 

\vspace{2mm} \noindent For $v\in V^{\otimes n}$ and $w\in V^{\otimes k}$,
\[
\Vert v\otimes w\Vert_{V^{\otimes(n+k)}}\leqslant\Vert v\Vert_{V^{\otimes n}}\Vert w\Vert_{V^{\otimes k}};
\]

\noindent Given a permutation $\sigma$ of order $n$, let $P_{\sigma}$
denote a linear transformation on $V^{\otimes n}$ such that 
\[
P_{\sigma}\big(v_{1}\otimes\ldots\otimes v_{n}\big)=v_{\sigma(1)}\otimes\ldots\otimes v_{\sigma(n)}.
\]
Then for all $v\in V^{\otimes n}$, 
\[
\Vert P_{\sigma}(v)\Vert_{V^{\otimes n}}=\Vert v\Vert_{V^{\otimes n}}.
\]
Throughout the rest, whenever working with Banach tensor products,
we always assume that a family of admissible tensor norms is given
fixed. For simplicity, we always use $|\cdot|$ to denote norms of
tensors, and use $\|\cdot\|$ to denote Hölder norms of paths.

Let ${\cal L}(U;V)$ denotes the space of bounded linear operators
from $U$ to $V$. We frequently identify spaces ${\cal L}(U;{\cal L}(U;V))$
and ${\cal L}(U^{\otimes2};V)$, and similarly for more general cases
$\mathcal{L}(U^{\otimes n};V)$.

Let $0<\beta\leqslant1/2$ and set $N\triangleq[1/\beta].$ The number
$\beta$ is fixed throughout this article, and all constants in the
article will, without further comment, depend on $\beta$.

A continuous mapping $X^{i}:\Delta_{T}\triangleq\{(s,t):0\leqslant s\leqslant t\leqslant T\}\rightarrow V^{\otimes i}$
is\textit{ $\beta$-Hölder continuous} if 
\[
\|X^{i}\|_{i\beta}\triangleq\sup_{0\leqslant s<t\leqslant T}\frac{|X_{s,t}^{i}|}{(t-s)^{i\beta}}<\infty.
\]
Let $T^{(N)}(V)$ denote the truncated tensor algebra $1\oplus V\oplus\cdots\oplus V^{\otimes N}$.
A mapping $\mathbf{X}:\Delta_{T}\rightarrow T^{(N)}(V)$ is called
\textit{multiplicative} if for any $s\leqslant u\leqslant t$,
\[
\mathbf{X}_{s,u}\otimes\mathbf{X}_{u,t}=\mathbf{X}_{s,t}.
\]

The following algebraic structure will be used in Section \ref{sec:Distortion-under-Lipschitz}
in a crucial way. For each $k\geqslant1$, consider the algebra 
\[
T^{(N)}(V)^{\boxtimes k}\triangleq\underbrace{T^{(N)}(V)\boxtimes\cdots\boxtimes T^{(N)}(V)}_{k}.
\]
Here $\boxtimes$ denotes the tensor product whose notation is used
to distinguish from the one $\otimes$ over $T^{(N)}(V).$ The product
structure $*$ over $T^{(N)}(V)^{\boxtimes k}$ is induced by 
\[
(\xi_{1}\boxtimes\cdots\boxtimes\xi_{k})*(\eta_{1}\boxtimes\cdots\boxtimes\eta_{k})\triangleq(\xi_{1}\otimes\eta_{1})\boxtimes\cdots\boxtimes(\xi_{k}\otimes\eta_{k}).
\]
If $f_{1},\cdots,f_{k}\in{\cal L}(T^{(N)}(V);U),$ we denote 
\[
f_{1}\boxtimes\cdots\boxtimes f_{k}:T^{(N)}(V)^{\boxtimes k}\rightarrow U^{\boxtimes k}
\]
as the mapping induced by 
\[
f_{1}\boxtimes\cdots\boxtimes f_{k}(\xi_{1}\boxtimes\cdots\boxtimes\xi_{k})\triangleq f_{1}(\xi_{1})\boxtimes\cdots\boxtimes f_{k}(\xi_{k}).
\]

There is an algebra homomorphism 
\[
\delta_{k}:(T^{(N)}(V),\otimes)\rightarrow\big(T^{(N)}(V)^{\boxtimes k},*\big)
\]
induced by 
\[
\delta_{k}(v)\triangleq v\boxtimes{\bf 1}\boxtimes\cdots\boxtimes{\bf 1}+\cdots+{\bf 1}\boxtimes\cdots\boxtimes{\bf 1}\boxtimes v,\ \ \ v\in V.
\]
See \cite{Reu93}, Section 1.4 for further details about $\delta_{k}$.
Let $\xi=v_{1}\otimes\cdots\otimes v_{r}\in V^{\otimes r}.$ Given
$I=\left\{ i_{1},\cdots,i_{m}\right\} $ with $i_{1}<\cdots<i_{m}$,
we define
\[
\xi|_{I}\triangleq v_{i_{1}}\otimes\ldots\otimes v_{i_{m}}
\]
and we adopt the convention that $\xi|_{\emptyset}$ is the scalar
$1$. One useful property of $\delta_{k}$ is that 

\begin{equation}
\delta_{k}\big(\xi\big)=\sum_{(I_{\alpha})}v|_{I_{1}}\boxtimes\ldots\boxtimes v|_{I_{k}}\label{eq:DeltaShuffle}
\end{equation}
where the above summation is taken over all partitions $(I_{\alpha})$
of $\{1,\ldots,r\}$ into disjoint subsets $I_{1},\cdots,I_{k}$ (some
of them can be $\emptyset$).

The \textit{free nilpotent group of order} $N$, denoted as $G^{(N)}(V)$,
is characterized by 
\begin{align}
G^{N}(V) & =\big\{\xi=(\xi^{0},\cdots,\xi^{N})\in T^{(N)}(V):\nonumber \\
 & \ \ \ \ \ \ \ \ \ \ \delta_{k}(\xi)=\sum_{0\leqslant l_{1}+\ldots+l_{k}\leqslant N}\xi^{l_{1}}\boxtimes\ldots\boxtimes\xi^{l_{k}}\;\forall k\geqslant2\big\}\label{eq:ShufPropk}
\end{align}

\begin{rem}
The above characterization of the free nilpotent group of order $N$
is equivalent to a common definition in terms of the exponential of
Lie series. Indeed, according to \cite{Reu93}, Theorem 3.2, an element
$\xi\in T^{(\infty)}(V)$ (the algebra of infinite tensor series)
is the exponential of a formal Lie series if and only if
\[
\tilde{\delta}_{2}(\xi)=\xi\boxtimes\xi,
\]
where $\tilde{\delta}_{2}$ is the canonical extension of $\delta_{2}$
onto $T^{(\infty)}(V)$. By a similar proof, this is also equivalent
to
\begin{equation}
\tilde{\delta}_{k}(\xi)=\xi^{\boxtimes k}\ \ \ \forall k\geqslant2.\label{eq:GroupLikeCharacterization}
\end{equation}
To see the equivalence between (\ref{eq:GroupLikeCharacterization})
and (\ref{eq:ShufPropk}), given $r$ and $k$ let us introduce the
projection 
\[
P_{r}:T^{(\infty)}(V)^{\boxtimes k}\rightarrow\bigoplus_{l_{1}+\ldots+l_{k}=r}V^{\otimes l_{1}}\boxtimes\cdots\boxtimes V^{\otimes l_{k}}\subseteq T^{(N)}(V)^{\boxtimes k}.
\]
If $\xi=(\xi^{0},\xi^{1},\xi^{2},\cdots)$ with $\xi^{i}\in V^{\otimes i}$,
then 
\[
P_{r}(\tilde{\delta}_{k}(\xi))=\sum_{l_{1}+\cdots+l_{k}=r}\xi^{l_{1}}\boxtimes\cdots\boxtimes\xi^{l_{k}}.
\]
As (\ref{eq:DeltaShuffle}) implies that $\delta_{k}$ sends $V^{\otimes r}$
to $\bigoplus_{l_{1}+\ldots+l_{k}=r}V^{\otimes l_{1}}\boxtimes\cdots\boxtimes V^{\otimes l_{k}}$,
we see that
\[
\delta_{k}(\xi^{r})=P_{r}(\tilde{\delta}_{k}(\xi))=\sum_{l_{1}+\cdots+l_{k}=r}\xi^{l_{1}}\boxtimes\cdots\boxtimes\xi^{l_{k}}.
\]
Therefore, if $(\xi^{0},\xi^{1},\ldots,\xi^{N})$ is the exponential
of a Lie series on $T^{(N)}(V)$, we have 
\[
\delta_{k}((\xi^{0},\ldots,\xi^{N}))=\sum_{0\leqslant l_{1}+\cdots+l_{k}\leqslant N}\xi^{l_{1}}\boxtimes\cdots\boxtimes\xi^{l_{k}}.
\]
\end{rem}
\begin{defn}
\label{def:geometricroughpath}A $\beta$-Hölder \textit{geometric
rough path} ${\bf X}$ is a multiplicative functional 
\[
{\bf X}=(1,X^{1},\cdots,X^{N}):\Delta_{T}\rightarrow T^{(N)}(V)
\]
such that $\mathbf{X}_{s,t}\in$$G^{(N)}(V)$ for any $(s,t)\in\Delta_{T}$
and $X^{i}$ is $\beta$-Hölder continuous for each $1\leqslant i\leqslant N$.
\end{defn}
\begin{rem}
Here we follow the convention in \cite{FH14} and call such rough
paths \textit{geometric}. In the earlier rough path literature (e.g.
\cite{FV10}), such paths are often called weakly geometric.
\end{rem}
Given two $\beta$-Hölder geometric rough paths ${\bf X},\tilde{{\bf X}},$
we define their ``distance'' by 
\begin{equation}
\rho_{\beta}({\bf X},\tilde{{\bf X}})\triangleq\sum_{i=1}^{N}\|X^{i}-\tilde{X}^{i}\|_{i\beta}.\label{eq:BetHMet}
\end{equation}
We also denote $\|{\bf X}\|_{\beta}\triangleq\rho_{\beta}({\bf X},{\bf 1})$.
\begin{rem}
A typical way of constructing geometric rough paths is as follows.
Let $\{X^{(m)}:m\geqslant1\}$ be sequence of continuous paths in
$V$ with bounded total variation. Then the limit

\begin{equation}
\mathbf{X}_{s,t}=\lim_{m\rightarrow\infty}\big(1,\int_{s<u_{1}<t}\mathrm{d}X_{u_{1}}^{(m)},\cdots,\int_{s<u_{1}<\cdots<u_{N}<t}\mathrm{d}X_{u_{1}}^{(m)}\otimes\cdots\otimes\mathrm{d}X_{u_{N}}^{(m)}\big)\label{eq:RoughPathConstruction}
\end{equation}
yields a $\beta$-Hölder geometric rough path provided that the convergence
holds under the $\beta$-Hölder metric (\ref{eq:BetHMet}). When $V$
is finite dimensional, the union over $\{\beta:\beta<\beta'\}$ of
all functionals $\Delta_{T}\rightarrow T^{(N)}(V)$ that can be constructed
through the procedure of (\ref{eq:RoughPathConstruction}) is precisely
the set of $\beta^{\prime}$-Hölder geometric rough paths (see \cite{FV10},
Corollary 8.24).

According to \cite{Lyo98}, when ${\bf X}$ is a geometric rough path,
the solution to the differential equation (\ref{eq:SDE}) can be constructed
in the sense of geometric rough paths. 
\end{rem}

\subsection{Controlled rough paths}

In this article, we take the perspective of controlled rough paths
introduced by Gubinelli \cite{Gub04}. A benefit is that the underlying
path space is a Banach space which simplifies the algebraic considerations
(for instance, when deriving continuity estimates) to some extent.
Heuristically, the solution to the rough differential equation $\mathrm{d}Y_{t}=F(Y_{t})\mathrm{d}X_{t}$
can formulated as the fixed point of the mapping $\mathcal{M}:Y_{\cdot}\rightarrow\int_{0}^{\cdot}F\left(Y_{t}\right)\mathrm{d}X_{t}$,
provided that ${\cal M}$ is a contraction on a suitable space of
controlled rough paths. We first define the notion of controlled rough
paths precisely.

Through out the rest of this article, we fix $\frac{1}{N+1}<\alpha<\beta\leqslant\frac{1}{N}\leqslant\frac{1}{2}$.
Let ${\bf X}$ be a given $\beta$-Hölder rough path over $V$. The
exponent $\alpha$ is used for the Hölder regularity of the controlled
rough paths to be introduced in what follows.
\begin{defn}
\label{def:CRP}A collection of continuous paths ${\cal Y}_{t}=(Y_{t}^{0},Y_{t}^{1},\cdots,Y_{t}^{N-1})$,
where $Y_{t}^{0}\in U$ and $Y_{t}^{i}\in{\cal L}(V^{\otimes i};U)$
for $1\leqslant i\leqslant N-1$, is called an ($\alpha$-\textit{Hölder})
\textit{controlled rough path} over $U$ with respect to ${\bf X},$
if the ``remainder'' defined by 
\[
{\cal RY}_{s,t}^{i}\triangleq\begin{cases}
Y_{t}^{i}-Y_{s}^{i}-\sum_{j=1}^{N-1-i}Y_{s}^{i+j}X_{s,t}^{j}, & \text{if }0\leqslant i\leqslant N-2,\\
Y_{t}^{N-1}-Y_{s}^{N-1}, & \text{if }i=N-1,
\end{cases}
\]
satisfies for each $0\leqslant i\leqslant N-1,$
\[
\|{\cal RY}^{i}\|_{(N-i)\alpha}\triangleq\sup_{0\leqslant s<t\leqslant T}\frac{|{\cal RY}_{s,t}^{i}|}{|t-s|^{(N-i)\alpha}}<\infty.
\]
The space of controlled rough paths over $U$ with respect to ${\bf X}$
is denoted as ${\cal D}_{{\bf X};\alpha}(U).$ We define a semi-norm
$\|\cdot\|_{{\bf X};\alpha}$ on ${\cal D}_{{\bf X};\alpha}(U)$ by
\[
\|{\cal Y}\|_{{\bf X};\alpha}\triangleq\sum_{i=0}^{N-1}\|{\cal RY}^{i}\|_{(N-i)\alpha}.
\]
\end{defn}
\begin{rem}
We often use the shorthanded notation $Y_{s,t}^{i}\triangleq Y_{t}^{i}-Y_{s}^{i}$.
\end{rem}
Let ${\bf X},\tilde{{\bf X}}$ be two $\beta$-Hölder rough paths.
To measure the distance between ${\cal Y}\in{\cal D}_{{\bf X};\alpha}(U)$
and $\tilde{{\cal Y}}\in{\cal D}_{\tilde{{\bf X}};\alpha}(U)$, we
define the functional 
\[
d_{{\bf X},\tilde{{\bf X}};\alpha}({\cal Y},\tilde{{\cal Y}})\triangleq\sum_{i=0}^{N-1}\|{\cal RY}^{i}-{\cal R}\tilde{{\cal Y}}^{i}\|_{(N-i)\alpha}.
\]

\vspace{2mm} \noindent \textbf{Notation}. In what follows, we always
use the notation $M(\cdots)$ to denote some universal function that
is continuous and increasing in every variable. Careful inspection
into the analysis shows that $M$ depends polynomially on every variable.

\section{Hölder estimates for controlled rough paths}

The following lemma tells us how to estimate $\|Y^{i}-\tilde{Y}^{i}\|_{\alpha}$
in terms of $d_{{\bf X},\tilde{{\bf X}};\alpha}({\cal Y},\tilde{{\cal Y}})$
and the difference of the initial data. This estimate is useful in
the next section when we study the stability of controlled paths under
Lipschitz transforms. For simplicity, we introduce the notation 
\[
\delta X^{i}\triangleq X^{i}-\tilde{X}^{i},\ \delta Y^{i}\triangleq Y^{i}-\tilde{Y}^{i},\ \delta{\cal R}^{i}\triangleq{\cal RY}^{i}-{\cal R\tilde{Y}}^{i}.
\]

\begin{lem}
\textcolor{blue}{\label{lem: estimating delta Y_alpha}}For each $2\leqslant i\leqslant N,$
there exists a continuous function $M_{i}:[0,\infty)^{5}\rightarrow[0,\infty)$,
increasing in each variable, such that 
\begin{align*}
\|\delta Y^{N-i}\|_{\alpha} & \leqslant M_{i}\big(T,\Vert\mathbf{X}\Vert_{\alpha},\Vert\tilde{\mathbf{X}}\Vert_{\alpha},\max_{1\leqslant j\leqslant i-1}|Y_{0}^{N-j}|,\max_{1\leqslant j\leqslant i-1}\|{\cal R}\mathcal{Y}^{N-j}\|_{j\alpha}\big)\\
 & \quad\;\times\big[\rho_{\alpha}({\bf X},\tilde{{\bf X}})+\|\delta{\cal R}^{N-i}\|_{i\alpha}+\sum_{j=1}^{i-1}\big(|\delta Y_{0}^{N-j}|+\|\delta{\cal R}^{N-j}\|_{j\alpha}\big)\big].
\end{align*}
\end{lem}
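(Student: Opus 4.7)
The plan is to argue by induction on $i$. The base case $i=1$ (serving as the anchor, though not part of the statement) follows directly from $Y_{s,t}^{N-1}=\mathcal{RY}_{s,t}^{N-1}$, which gives $\|\delta Y^{N-1}\|_\alpha\leq\|\delta\mathcal{R}^{N-1}\|_\alpha$. For the inductive step at $i\geq 2$, I expand using Definition \ref{def:CRP} together with the identity $Y_s^{m}X_{s,t}^j-\tilde{Y}_s^{m}\tilde{X}_{s,t}^j = Y_s^{m}\delta X_{s,t}^j + \delta Y_s^{m}\tilde{X}_{s,t}^j$ (with $m=N-i+j$) to obtain
\[
\delta Y_{s,t}^{N-i} = \delta\mathcal{R}^{N-i}_{s,t} + \sum_{j=1}^{i-1}\bigl(Y_s^{N-i+j}\,\delta X_{s,t}^j + \delta Y_s^{N-i+j}\,\tilde X_{s,t}^j\bigr).
\]
I choose this decomposition, rather than the symmetric one featuring $\tilde Y_s^{N-i+j}\delta X_{s,t}^j$, because $\tilde Y_0$ and $\|\mathcal{R}\tilde{\mathcal{Y}}\|$ do \emph{not} appear among the arguments of $M_i$, so no direct bound on $|\tilde Y_s|$ in terms of those arguments is available.

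Next I estimate each summand via the H\"older norms:
\[
|X_{s,t}^j|\leq\|\mathbf{X}\|_\alpha(t-s)^{j\alpha},\quad |\tilde X_{s,t}^j|\leq\|\tilde{\mathbf{X}}\|_\alpha(t-s)^{j\alpha},\quad |\delta X_{s,t}^j|\leq\rho_\alpha(\mathbf{X},\tilde{\mathbf{X}})(t-s)^{j\alpha},
\]
and $|\delta\mathcal{R}^{N-i}_{s,t}|\leq\|\delta\mathcal{R}^{N-i}\|_{i\alpha}(t-s)^{i\alpha}$. The pointwise value $|Y_s^{N-i+j}|$ is bounded by iterating the controlled path expansion of $\mathcal Y$ from time $0$: this yields a bound depending only on $T$, $\|\mathbf{X}\|_\alpha$, and $|Y_0^{N-k}|$, $\|\mathcal{RY}^{N-k}\|_{k\alpha}$ for $1\leq k\leq i-1$, matching exactly the five inputs of $M_i$. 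For the difference I use
\[
|\delta Y_s^{N-i+j}|\leq|\delta Y_0^{N-i+j}|+T^\alpha\|\delta Y^{N-i+j}\|_\alpha,
\]
and control the last factor by the inductive hypothesis applied at the strictly smaller index $i-j<i$. The key observation is that the sum of ``smalls'' appearing in that inductive estimate is a sub-sum of $\Sigma_i:=\rho_\alpha(\mathbf{X},\tilde{\mathbf{X}})+\|\delta\mathcal{R}^{N-i}\|_{i\alpha}+\sum_{k=1}^{i-1}(|\delta Y_0^{N-k}|+\|\delta\mathcal{R}^{N-k}\|_{k\alpha})$, and its $M_{i-j}$ prefactor depends on a sub-collection of the five inputs of $M_i$.

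Assembling the bounds, factoring out $(t-s)^\alpha$ (using $(t-s)^{k\alpha}\leq T^{(k-1)\alpha}(t-s)^\alpha$ for $k\geq 1$), and taking the supremum over $0\leq s<t\leq T$ delivers an estimate of the claimed form $M_i(\cdots)\,\Sigma_i$. The only real care required is in the bookkeeping, to ensure the right-hand side is \emph{linear} in the small quantities: every contribution must fit the shape (bounded by $M_i$)$\,\cdot\,$(entry of $\Sigma_i$). For instance, $|Y_s^{N-i+j}|\cdot\rho_\alpha$ is (bounded)$\,\cdot\,$(small), while the inductive contribution $T^\alpha M_{i-j}(\cdots)\Sigma_{i-j}\cdot\|\tilde{\mathbf{X}}\|_\alpha$ remains of this form since $\|\tilde{\mathbf{X}}\|_\alpha$ is an input of $M_i$ and $\Sigma_{i-j}$ is a partial sum of $\Sigma_i$. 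Defining $M_i$ as the resulting continuous increasing (in fact polynomial) function of the five listed inputs completes the induction.
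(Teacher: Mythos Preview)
Your proof is correct and follows essentially the same approach as the paper: the same induction on $i$, the same asymmetric splitting $Y_s^m X_{s,t}^j-\tilde Y_s^m \tilde X_{s,t}^j=Y_s^m\,\delta X_{s,t}^j+\delta Y_s^m\,\tilde X_{s,t}^j$ (chosen for the same reason, namely that $\tilde Y$ data do not appear in the arguments of $M_i$), and the same use of the induction hypothesis to control $\|\delta Y^{N-i+j}\|_\alpha$. The only cosmetic difference is that you bound $|Y_s^{N-i+j}|$ by the one-line controlled-path expansion from time $0$, whereas the paper obtains the same bound by invoking the induction hypothesis with $\tilde{\mathbf X}=\mathbf X$ and $\tilde{\mathcal Y}=0$; both routes yield a quantity depending on exactly the five listed inputs.
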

\begin{rem}
If $i=1$, $\delta Y^{N-i}=\delta\mathcal{R}^{N-1}$ and hence 
\begin{equation}
\Vert\delta Y^{N-i}\Vert_{\alpha}=\Vert\delta\mathcal{R}^{N-1}\Vert_{\alpha}.\label{eq:TopDegreeRemainder}
\end{equation}
\end{rem}
\begin{proof}
We prove the lemma by induction. When $i=2,$ we have 
\begin{align*}
Y_{s,t}^{N-2}-\tilde{Y}_{s,t}^{N-2} & =\big(Y_{s}^{N-1}X_{s,t}^{1}-\tilde{Y}_{s}^{N-1}\tilde{X}_{s,t}^{1}\big)+\big({\cal RY}_{s,t}^{N-2}-{\cal R}\tilde{{\cal Y}}_{s,t}^{N-2}\big)\\
 & =Y_{s}^{N-1}(X_{s,t}^{1}-\tilde{X}_{s,t}^{1})+(Y_{s}^{N-1}-\tilde{Y}_{s}^{N-1})\tilde{X}_{s,t}^{1}+\big({\cal RY}_{s,t}^{N-2}-{\cal R}\tilde{{\cal Y}}_{s,t}^{N-2}\big).
\end{align*}
It follows from (\ref{eq:TopDegreeRemainder}) that
\begin{align*}
\|\delta Y^{N-2}\|_{\alpha} & \leqslant\left(1+T^{\alpha}\right)\big(|Y_{0}^{N-1}|+\|Y^{N-1}\|_{\alpha}\big)\|\delta X^{1}\|_{\alpha}\\
 & \ \ \ +\left(1+T^{\alpha}\right)\big[\big(|\delta Y_{0}^{N-1}|+\|\delta Y^{N-1}\|_{\alpha}\big)\|\tilde{X}^{1}\|_{\alpha}+\|\delta{\cal R}^{N-2}\|_{2\alpha}T^{\alpha}\big]\\
 & \leqslant\left(1+T^{\alpha}\right)\big(T^{\alpha}+\Vert\tilde{{\bf X}}\Vert_{\alpha}+|Y_{0}^{N-1}|+\|{\cal R}\mathcal{Y}^{N-1}\|_{\alpha}\big)\\
 & \ \ \ \times\big[\|\delta X^{1}\|_{\alpha}+|\delta Y_{0}^{N-1}|+\|\delta{\cal R}^{N-1}\|_{\alpha}+\|\delta{\cal R}^{N-2}\|_{2\alpha}\big].
\end{align*}
Therefore, the claim holds in this case. 

Suppose that the claim holds for $\delta Y^{N-1},\cdots,\delta Y^{N-i}.$
Using that 
\begin{align*}
\delta Y_{s,t}^{N-(i+1)} & =\sum_{j=1}^{i}Y_{s}^{N-j}X_{s,t}^{i+1-j}-\sum_{j=1}^{i}\tilde{Y}_{s}^{N-j}\tilde{X}_{s,t}^{i+1-j}+\delta{\cal R}^{N-(i+1)},
\end{align*}
we have 
\begin{align}
 & \|\delta Y^{N-(i+1)}\|_{\alpha}\nonumber \\
 & \leqslant\big(1+T^{(i+1)\alpha}\big)\big[\sum_{j=1}^{i}\big(|Y_{0}^{N-j}|+\|Y^{N-j}\|_{\alpha}\big)\|\delta X^{i+1-j}\|_{(i+1-j)\alpha}\nonumber \\
 & \ \ \ +\sum_{j=1}^{i}\big(|\delta Y_{0}^{N-j}|+\|\delta Y^{N-j}\|_{\alpha}\big)\|\tilde{X}^{i+j}\|_{(i+j)\alpha}+\|\delta{\cal R}^{N-(i+1)}\|_{(i+1)\alpha}\big]\nonumber \\
 & \leqslant\big(1+T^{(i+1)\alpha}\big)\big(1+\max_{1\leqslant j\leqslant i}\big(|Y_{0}^{N-j}|+\|Y^{N-j}\|_{\alpha}\big)+\|\tilde{\mathbf{X}}\|_{\alpha}\big)\nonumber \\
 & \quad\;\big[\rho_{\alpha}({\bf X},\tilde{{\bf X}})+\sum_{j=1}^{i}\big(|\delta Y_{0}^{N-j}|+\|\delta Y^{N-j}\|_{\alpha}\big)+\|\delta{\cal R}^{N-(i+1)}\|_{(i+1)\alpha}\big].\label{eq:IndStep}
\end{align}
By the induction hypothesis with $\mathbf{X}=\tilde{\mathbf{X}}$
and taking $\tilde{Y}=0$, there is a continuous function $M_{j}$,
increasing in every variable, such that 
\begin{equation}
\|Y^{N-j}\|_{\alpha}\leqslant M_{j}\big(T,\Vert\mathbf{X}\Vert_{\alpha},\max_{1\leqslant l\leqslant j-1}|Y_{0}^{N-l}|,\max_{1\leqslant l\leqslant j}\|{\cal R}\mathcal{Y}^{N-l}\|_{\alpha}\big)\label{eq:YBound}
\end{equation}
and similarly for each $1\leqslant j\leqslant i$ we have 
\begin{align}
\|\delta Y^{N-j}\|_{\alpha} & \leqslant\tilde{M}_{j}\big(T,\Vert\mathbf{X}\Vert_{\alpha},\Vert\tilde{\mathbf{X}}\Vert_{\alpha},\max_{1\leqslant l\leqslant j-1}\big|Y_{0}^{N-l}\big|,\max_{1\leqslant l\leqslant j-1}\|{\cal R}\mathcal{Y}^{N-l}\|_{\alpha}\big)\nonumber \\
 & \quad\;\times\big[\rho_{\alpha}({\bf X},\tilde{{\bf X}})+\|\delta{\cal R}^{N-j}\|_{j\alpha}+\sum_{l=1}^{j-1}\big(\big|\delta Y_{0}^{N-l}\big|+\|\delta{\cal R}^{N-l}\|_{l\alpha}\big)\big].\label{eq:DelYBound}
\end{align}
The induction step follows by substituting (\ref{eq:YBound}) and
(\ref{eq:DelYBound}) into (\ref{eq:IndStep}).
\end{proof}
\begin{rem}
\label{rem:CompletenessOfMetric}An immediate consequence of Lemma
\ref{lem: estimating delta Y_alpha} is that ${\cal D}_{{\bf X};\alpha}(U)$
is a Banach space under the norm 
\begin{equation}
\interleave{\cal Y}\interleave_{{\bf X};\alpha}\triangleq\|{\cal Y}\|_{{\bf X};\alpha}+\sum_{i=0}^{N-1}|Y_{0}^{i}|.\label{eq:ControlNorm}
\end{equation}
\end{rem}

\section{\label{sec:Distortion-under-Lipschitz}Stability of controlled rough
paths under Lipschitz transforms}

Under the framework of controlled rough paths, an essential ingredient
for solving an RDE $d{\cal Y}=F({\cal Y}){\rm d}{\bf X}$ (with Lipschitz
vector field $F$) is to show that $F({\cal Y})$ is also a controlled
rough path. We would like to point out that the extension of this
property from the case of $1/3<\alpha\leqslant1/2$ (which is the
common setting in most of the literature) to the general case of $\alpha<1/3$
is non-trivial. As we will see, the main challenge here has an \textit{algebraic}
nature rather than just being the standard regularity estimates. To
point this out concisely, the Taylor expansion of $F$ for the $0$-th
level function (i.e. equation (\ref{eq:LipFctExp}) below when $j=0$)
allows us to motivate the full construction of $F({\cal Y})$ as a
controlled rough path in one go. However, checking the remainder regularity
conditions for all the derivative paths requires deeper algebraic
considerations and the geometric nature of ${\bf X}$ plays an essential
role. For this purpose, we take the viewpoint of Reutenauer \cite{Reu93}
and rely on the coproduct structure $\delta_{k}$ introduced in Section
\ref{subsec:GeoRP} in a crucial way.

We begin by recalling the notion of Lipschitz functions in the sense
to Stein \cite{Ste70}. Let $\mathcal{L}_{\mathrm{sym}}(V^{\boxtimes j};W)$
denote the set of linear bounded operators $T$ from $V^{\boxtimes j}$
to $W$ such that for all permutations $\sigma$ over $\{1,\ldots,j\}$,
\[
T(v_{\sigma(1)}\boxtimes\cdots\boxtimes v_{\sigma(n)})=T(v_{1}\boxtimes\cdots\boxtimes v_{n}).
\]

\begin{defn}
\label{def:LipFunc}Let $W,U$ be two Banach spaces and let $K$ be
a closed subset of $W$. Suppose that $\gamma\in(N,N+1]$ where $N$
is a non-negative integer. A collection of functions $F=(F^{0},F^{1},\cdots,F^{N})$
is said to be\textit{ $\gamma$-Lipschitz} over $K$, if:

\vspace{2mm}\noindent (i) the functions $F^{0}:K\rightarrow U$ and
$F^{j}:K\rightarrow{\cal L}_{{\rm sym}}(W^{\boxtimes j};U)$ ($1\leqslant j\leqslant N$)
are bounded on $K$;\\
(ii) for each $0\leqslant j\leqslant N$, the following Taylor expansion
holds:
\begin{equation}
F^{j}(y)(\xi)=\sum_{l=0}^{N-j}\frac{1}{l!}F^{j+l}(x)((y-x)^{\boxtimes l}\boxtimes\xi)+R_{j}(x,y)(\xi),\ \ x,y\in K,\xi\in W^{\boxtimes j},\label{eq:LipFctExp}
\end{equation}
where the remainder $R_{j}:K\times K\rightarrow{\cal L}_{{\rm sym}}(V^{\boxtimes j};W)$
satisfies 
\[
\sup_{x\neq y\in K}\frac{|R_{j}(x,y)|}{|x-y|^{\gamma-j}}<\infty\qquad\qquad\text{for all }0\leqslant j\leqslant N.
\]
The\textit{ ${\rm Lip}$-$\gamma$ norm} of $F$, denoted as $\|F\|_{\text{Lip-}\gamma}$,
is defined to be the smallest number $M>0$ such that for all $x,y\in K$,
\[
|F^{j}(x)|\leqslant M,\ |R_{j}(x,y)|\leqslant M|x-y|^{\gamma-j}
\]
for all $0\leqslant j\leqslant N$. The Banach space of all $\gamma$-Lipschitz
functions $F=(F^{0},\cdots,F^{N})$ is denoted as ${\rm Lip}(\gamma,K)$.
\end{defn}
Now let $\beta\in(0,1)$, $N\triangleq[1/\beta]$ and $\alpha\in(\frac{1}{N+1},\beta)$.
Let ${\bf X}$ be a given $\beta$-Hölder geometric rough path over
a Banach space $V$. Our aim in this section is to show that, if ${\cal Y}$
is an $\alpha$-Hölder controlled rough path over $W$ with respect
to ${\bf X}$, and $F=(F^{0},\cdots,F^{N})$ is $\gamma$-Lipschitz
over $W$ taking values in $U$, then $F({\cal Y})$ is an $\alpha$-Hölder
controlled rough path over $U$. In addition, given another controlled
rough path $\tilde{\mathcal{Y}}$, we shall establish a quantitative
continuity estimate of $d_{\mathbf{X},\tilde{\mathbf{X}};\alpha}(F(\mathcal{Y}),F(\tilde{\mathcal{Y})})$
in terms of $d_{\mathbf{X},\tilde{\mathbf{X}};\alpha}(\mathcal{Y},\tilde{\mathcal{Y}})$.

In the first place, we need to elaborate the meaning of $F({\cal Y})$
as a controlled rough path, which consists of the actual path in $U$
along with its $N-1$ derivative paths. The actual path, denoted as
$Z_{t}^{0},$ should apparently be given by $Z_{t}^{0}\triangleq F^{0}(Y_{t}^{0}).$
To motivate the derivative paths, we use the Taylor expansion of $F^{0}:$
\[
F^{0}(Y_{t}^{0})-F^{0}(Y_{s}^{0})\dot{=}\sum_{j=1}^{N-1}\frac{1}{j!}F^{j}(Y_{s}^{0})\big((Y_{s,t}^{0})^{\boxtimes j}\big),
\]
where $\dot{=}$ means being equal up to a term of regularity $|t-s|^{N\alpha}$.
Note that a term of such regularity is regarded as a remainder in
the expansion of $Z^{0}$. To proceed further, we adopt the convention
that $Y_{t}^{i}\in{\cal L}(V^{\otimes i};W)$ is extended to a linear
mapping from $T^{(N)}(V)$ to $T^{(N)}(W)$ by setting $Y_{t}^{i}(\xi)\triangleq0$
if $\xi\in V^{\otimes j}$ with $j\neq i$. Using the expansion of
$Y^{0},$ we have 
\begin{align*}
(Y_{s,t}^{0})^{\boxtimes j} & \dot{=}\big(\big(\sum_{i=1}^{N-1}Y_{s}^{i}\big)\mathbf{X}_{s,t}\big)^{\boxtimes j}=\big(\sum_{i=1}^{N-1}Y_{s}^{i}\big)^{\boxtimes j}\big(\mathbf{X}_{s,t}^{\boxtimes j}\big).
\end{align*}

Since $\mathbf{X}$ is a geometric rough path, $\mathbf{X}_{s,t}$
takes values in the free nilpotent group $G^{(N)}(V)$. By using (\ref{eq:ShufPropk}),
it is not hard to see that

\[
\delta_{j}(\mathbf{X}_{s,t})\dot{=}\mathbf{X}_{s,t}\boxtimes\cdots\boxtimes\mathbf{X}_{s,t}.
\]
As a result, we have 
\begin{align*}
F^{0}(Y_{t}^{0}) & \stackrel{\cdot}{=}\sum_{j=0}^{N-1}\frac{1}{j!}F^{j}(Y_{s}^{0})\big(\big(\sum_{i=1}^{N-1}Y_{s}^{i}\big)^{\boxtimes j}\big(\delta_{j}\big(\mathbf{X}_{s,t}\big)\big)\big)\\
 & =F^{0}(Y_{s}^{0})+\sum_{r=1}^{N-1}\sum_{j=1}^{N-1}\frac{1}{j!}F^{j}(Y_{s}^{0})\big(\sum_{i_{1}+\cdots+i_{j}=r}\big(Y_{s}^{i_{1}}\boxtimes\cdots\boxtimes Y_{s}^{i_{j}}\big)\big(\delta_{j}(X_{s,t}^{r})\big)\big),
\end{align*}
where the summation $\sum_{i_{1}+\cdots+i_{j}=r}$ is taken over all
$1\leqslant i_{1},\cdots,i_{j}\leqslant N-1$. It is then clear that
the derivative paths $Z^{1},\cdots,Z^{N-1}$ should be defined by
\begin{equation}
Z_{s}^{r}\triangleq\sum_{j=1}^{N-1}\frac{1}{j!}F^{j}(Y_{s}^{0})\big(\sum_{i_{1}+\cdots+i_{j}=r}(Y_{s}^{i_{1}}\boxtimes\cdots\boxtimes Y_{s}^{i_{j}})\circ\delta_{j}|_{V^{\otimes j}}\big).\label{eq:GDerZ}
\end{equation}
Note that the requirement $i_{1}+\cdots+i_{j}=r$ together with $i_{1},\cdots,i_{j}\geqslant1$
mean that the sum $\sum_{j=1}^{N-1}$ is in reality a sum $\sum_{j=1}^{r}$
as the terms from $j=r+1$ to $j=N-1$ are zero. We have left it as
$\sum_{j=1}^{N-1}$ for the convenience of interchanging summations
later on. Note that $Z_{s}^{r}\in{\cal L}(V^{\otimes r};U).$

To prove that $\mathcal{Z}=\left(Z^{0},\cdots,Z^{N-1}\right)$ is
controlled by $\mathbf{X}$, by Definition \ref{def:CRP} we need
to show that 
\begin{equation}
Z_{s,t}^{r}\dot{=}Z_{s}^{r+1}X_{s,t}^{1}+\cdots+Z_{s}^{N-1}X_{s,t}^{N-1-r}\label{eq:ConPropZ}
\end{equation}
for each $1\leqslant r\leqslant N-1$, where in this case $\dot{=}$
means being equal up to a term of regularity $|t-s|^{(N-r)\alpha}.$
The main challenge (and essence) of proving (\ref{eq:ConPropZ}) is
algebraic rather than analytic. In particular, this relies on a key
algebraic lemma which we now motivate.

First of all, there is nothing to prove when $r=0,$ since the definition
of $Z^{r}$ guarantees the desired regularity property in this case.
For $1\leqslant r\leqslant N-1,$ let $\xi\in V^{\otimes r}$ be a
generic element. To simplify the notation in the computation below,
we set 
\begin{equation}
\eta_{t}^{j}\triangleq\sum_{i_{1}+\cdots+i_{j}=r}(Y_{t}^{i_{1}}\boxtimes\cdots\boxtimes Y_{t}^{i_{j}})\circ\delta_{j}\left(\xi\right)\in W^{\boxtimes j}.\label{eq:TempTensor}
\end{equation}
Then we can write
\begin{align}
Z_{t}^{r}(\xi) & =\sum_{j=1}^{N-1}\frac{1}{j!}F^{j}(Y_{t}^{0})\big(\eta_{t}^{j}\big)\nonumber \\
 & \dot{=}\sum_{j=1}^{N-1}\frac{1}{j!}\big(\sum_{l=0}^{N-1-j}\frac{1}{l!}F^{j+l}(Y_{s}^{0})\big((Y_{s,t}^{0})^{\boxtimes l}\boxtimes\eta_{t}^{j}\big)\big)\label{eq:1=00003D}\\
 & =\sum_{j=1}^{N-1}\frac{1}{j!}\sum_{k=j}^{N-1}\frac{1}{(k-j)!}F^{k}(Y_{s}^{0})\big(\big(Y_{s,t}^{0}\big)^{\boxtimes(k-j)}\boxtimes\eta_{t}^{j}\big)\nonumber \\
 & =\sum_{k=1}^{N-1}\sum_{j=1}^{k}\frac{1}{j!(k-j)!}F^{k}(Y_{s}^{0})\big(\big(Y_{s,t}^{0}\big)^{\boxtimes(k-j)}\boxtimes\eta_{t}^{j}\big).\nonumber 
\end{align}
Let us define 
\begin{align}
\hat{\eta}_{t}^{j} & =\sum_{i_{1}+\cdots+i_{j}=r}\Big(\sum_{l_{1}\geqslant i_{1},\cdots,l_{j}\geqslant i_{j}}Y_{s}^{l_{1}}X_{s,t}^{l_{1}-i_{1}}\boxtimes\cdots\boxtimes Y_{s}^{l_{j}}X_{s,t}^{l_{j}-i_{j}}\Big)\circ\delta_{j}\left(\xi\right)\label{eq:ApproxTempTensor}
\end{align}
and 
\begin{equation}
\hat{Y}_{s,t}^{0}=\sum_{m=1}^{N-1}Y_{s}^{m}X_{s,t}^{m},\label{eq:ApproxTempTensor2}
\end{equation}
respectively. It follows that 
\begin{align}
Z_{t}^{r}(\xi) & \dot{=}\sum_{k=1}^{N-1}\sum_{j=1}^{k}\frac{1}{j!(k-j)!}F^{k}(Y_{s}^{0})\big(\big(Y_{s,t}^{0}\big)^{\boxtimes(k-j)}\boxtimes\eta_{t}^{j}\big)\nonumber \\
 & \dot{=}\sum_{k=1}^{N-1}\sum_{j=1}^{k}\frac{1}{j!(k-j)!}F^{k}(Y_{s}^{0})\big(\big(\hat{Y}_{s,t}^{0}\big)^{\boxtimes(k-j)}\boxtimes\hat{\eta}_{t}^{j}\big).\label{eq:2=00003D}
\end{align}
On the other hand, we have
\begin{align*}
 & (Z_{s}^{r}+Z_{s}^{r+1}X_{s,t}^{1}+\cdots+Z_{s}^{N-1}X_{s,t}^{N-1-r})(\xi)\\
 & =\sum_{l=r}^{N-1}\big(\sum_{k=1}^{N-1}\sum_{i_{1}+\cdots+i_{k}=l}\frac{1}{k!}F^{k}(Y_{s}^{0})\big(Y_{s}^{i_{1}}\boxtimes\cdots\boxtimes Y_{s}^{i_{k}}\big)\circ\delta_{k}\big(\mathbf{X}_{s,t}\otimes\xi\big)\big).
\end{align*}
Consequently, to prove $\mathcal{Z}$ is a controlled path, it boils
down to showing that 
\begin{align}
 & \sum_{k=1}^{N-1}\sum_{j=1}^{k}\frac{1}{j!(k-j)!}F^{k}(Y_{s}^{0})\big(\big(\hat{Y}_{s,t}^{0}\big)^{\boxtimes(k-j)}\boxtimes\hat{\eta}_{t}^{j}\big)\nonumber \\
 & \dot{=}\sum_{k=1}^{N-1}\sum_{l=1}^{N-1}\frac{F^{k}(Y_{s}^{0})}{k!}\big(\sum_{i_{1}+\cdots+i_{k}=l}\big(Y_{s}^{i_{1}}\boxtimes\cdots\boxtimes Y_{s}^{i_{k}}\big)\circ\delta_{k}\big(\mathbf{X}_{s,t}\otimes\xi\big)\big).\label{eq:AlgLem0}
\end{align}
Here an important point is that $F^{k}(Y_{s}^{0})$ is a symmetric
functional over $W^{\boxtimes k}$. To respect the underlying symmetry,
let $S_{k}:W^{\boxtimes k}\rightarrow W^{\boxtimes k}$ be the symmetrization
operator on homogeneous $k$-tensors, and let $K$ be its kernel.
We introduce the notation $\xi\stackrel{{\rm s}}{=}\eta$ to mean
that $\xi-\eta\in K$. Using the symmetry of $F^{k}(Y_{s}^{0})$,
it remains to establish the following algebraic lemma.
\begin{lem}
\label{lem:AlgLem}For each $1\leqslant k\leqslant N-1$, we have
\begin{align}
 & \sum_{j=1}^{k}\frac{1}{j!(k-j)!}\left(\hat{Y}_{s,t}^{0}\right)^{\boxtimes(k-j)}\boxtimes\hat{\eta}_{t}^{j}\nonumber \\
\stackrel{\mathrm{s}}{=} & \frac{1}{k!}\sum_{l=r}^{N-1}\sum_{i_{1}+\cdots+i_{k}=l}\left(Y_{s}^{i_{1}}\boxtimes\cdots\boxtimes Y_{s}^{i_{k}}\right)\circ\delta_{k}\left(\mathbf{X}_{s,t}\otimes\xi\right)+\Delta_{3;s,t}^{k},\label{eq:AlgLem}
\end{align}
where we recall that $\hat{\eta}_{t}^{j}$ is defined in (\ref{eq:ApproxTempTensor}),
$\hat{Y}_{s,t}^{0}$ is defined in (\ref{eq:ApproxTempTensor2}) and
\begin{equation}
\Delta_{3;s,t}^{k}\triangleq\frac{1}{k!}\sum_{\substack{1\leqslant i_{1},\cdots,i_{k}\leqslant N-1\\
i_{1}+\ldots+i_{k}\geqslant N
}
}Y_{s}^{i_{1}}\boxtimes\ldots\boxtimes Y_{s}^{i_{k}}\big(\big(\mathbf{X}_{s,t}\boxtimes\ldots\boxtimes\mathbf{X}_{s,t}\big)*\delta_{k}\big(\xi\big)\big).\label{eq:Delta3}
\end{equation}
\end{lem}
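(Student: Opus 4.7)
The plan is to verify (\ref{eq:AlgLem}) by symmetrizing both sides with the operator $S_k$ and showing that they produce the same expression as a sum over a common combinatorial index set; since $\stackrel{\mathrm{s}}{=}$ means equality modulo $\ker S_k$, this suffices. The unifying observation is that every summand on either side may be written as an elementary tensor
\[
E_{(J_a,L_a)}\triangleq\bigotimes_{a=1}^{k}Y_{s}^{L_a}\bigl(X_{s,t}^{L_a-|J_a|}\otimes v|_{J_a}\bigr),
\]
where $\xi=v_1\otimes\cdots\otimes v_r$, $(J_1,\ldots,J_k)$ is an ordered partition of $\{1,\ldots,r\}$ into possibly empty pieces, and $L_a$ is an integer with $\max(|J_a|,1)\leq L_a\leq N-1$. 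The goal is to show that both sides, after applying $S_k$, equal $\frac{1}{k!}\sum_{(J_a),(L_a)}E_{(J_a,L_a)}$ summed over this common index set.

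For the LHS, I would first unravel $\hat{\eta}_{t}^{j}$ using (\ref{eq:ApproxTempTensor}) together with the shuffle formula (\ref{eq:DeltaShuffle}) for $\delta_{j}(\xi)$, so that the $j$ slots of $\hat{\eta}_{t}^{j}$ correspond to the non-empty pieces of an ordered partition of $\{1,\ldots,r\}$, while the $k-j$ slots of $(\hat{Y}_{s,t}^{0})^{\boxtimes(k-j)}$ correspond to the empty pieces. The tensor $(\hat{Y}_{s,t}^{0})^{\boxtimes(k-j)}\boxtimes\hat{\eta}_{t}^{j}$ is invariant under the natural $S_{k-j}\times S_{j}$-action, so applying $S_{k}$ and combining with the combinatorial weight $\frac{1}{j!(k-j)!}$ exactly produces $\frac{1}{k!}$ times the unweighted sum over all configurations whose profile has $j$ non-empty pieces and $k-j$ empty ones. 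Summing over $j=1,\ldots,k$ then covers every configuration in the target index set (the lower bound $j\geq 1$ is harmless since $r\geq 1$ forces at least one non-empty piece in any such partition).

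For the RHS, I would use that $\delta_{k}$ is an algebra homomorphism to write $\delta_{k}(\mathbf{X}_{s,t}\otimes\xi)=\delta_{k}(\mathbf{X}_{s,t})*\delta_{k}(\xi)$ and then invoke the group-like identity (\ref{eq:ShufPropk}) to expand $\delta_{k}(\mathbf{X}_{s,t})=\sum_{0\leq l_{1}+\cdots+l_{k}\leq N}X_{s,t}^{l_{1}}\boxtimes\cdots\boxtimes X_{s,t}^{l_{k}}$. Applying $(Y_{s}^{i_{1}}\boxtimes\cdots\boxtimes Y_{s}^{i_{k}})$ to the resulting product forces $l_{a}+|J_{a}|=i_{a}$ in slot $a$, and with the substitution $L_{a}=i_{a}$, $l_{a}=L_{a}-|J_{a}|$, the main RHS term reproduces $E_{(J_a,L_a)}$ exactly for those configurations with $\sum L_{a}\leq N-1$; the constraint $\sum l_{a}\leq N$ from (\ref{eq:ShufPropk}) is automatic here, since $\sum l_{a}=\sum L_{a}-r\leq N-1-r$. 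The remaining configurations with $\sum L_{a}\geq N$ (still subject to $L_{a}\leq N-1$) are supplied precisely by $\Delta_{3;s,t}^{k}$, whose definition uses the unrestricted $\mathbf{X}_{s,t}\boxtimes\cdots\boxtimes\mathbf{X}_{s,t}$ in place of $\delta_{k}(\mathbf{X}_{s,t})$.

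The hardest part of the argument is this last bookkeeping step: the geometric nature of $\mathbf{X}$ enters exactly through (\ref{eq:ShufPropk}), and the truncation $\sum l_{a}\leq N$ built into that formula is what the correction $\Delta_{3;s,t}^{k}$ is engineered to compensate. Once the two indexing sets are shown to match (both range over all ordered partitions $(J_{a})$ of $\{1,\ldots,r\}$ and levels $L_{a}\in[\max(|J_a|,1),N-1]$), the identity (\ref{eq:AlgLem}) follows from the $S_{k}$-symmetry verified in Step 2 and the manifest symmetry of the elementary-tensor sum on the right.
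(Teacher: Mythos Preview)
Your proposal is correct and follows essentially the same route as the paper's proof. Both arguments reduce the LHS via the shuffle formula (\ref{eq:DeltaShuffle}) and a symmetrization count (your $S_{k-j}\times S_j$-invariance observation is exactly the paper's $\binom{k}{j}$-placement argument) to the common form $\tfrac{1}{k!}\sum_{(J_a),(L_a)} E_{(J_a,L_a)}$, and then identify the main RHS term with the range $\sum L_a\leq N-1$ using the group-like identity (\ref{eq:ShufPropk}) together with the homomorphism property of $\delta_k$, leaving $\Delta_{3;s,t}^{k}$ to cover $\sum L_a\geq N$; the only difference is organizational---you name the common target up front, while the paper transforms the LHS step by step until it reaches the RHS.
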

\begin{rem}
The role of the term $\Delta_{3;s,t}^{k}$ is to compensate the difference
between $\delta_{k}({\bf X}_{s,t})$ and ${\bf X}_{s,t}^{\boxtimes k}$
(cf. (\ref{eq:ShufPropk})), which arises from tensor truncation.
\end{rem}
\begin{proof}
By the linearity fir both sides of (\ref{eq:AlgLem}), it is enough
to consider the case $\xi=v_{1}\otimes\ldots\otimes v_{r}$ when $v_{i}\in V$
for all $1\leqslant i\leqslant r$. Recall that
\begin{equation}
\hat{\eta}_{t}^{j}=\sum_{i_{1}+\cdots+i_{j}=r}\Big(\sum_{l_{1}\geqslant i_{1},\ldots,l_{j}\geqslant i_{j}}Y_{s}^{l_{1}}X_{s,t}^{l_{1}-i_{1}}\boxtimes\cdots\boxtimes Y_{s}^{l_{j}}X_{s,t}^{l_{j}-i_{j}}\Big)\circ\delta_{j}\left(v_{1}\otimes\cdots\otimes v_{r}\right),\label{eq:AlgLine1}
\end{equation}
and 
\begin{equation}
\delta_{j}\big(v_{1}\otimes\cdots\otimes v_{r}\big)=\sum_{(I_{\alpha})}\xi|_{I_{1}}\boxtimes\cdots\boxtimes\xi|_{I_{j}},\label{eq:CoproductShuffle}
\end{equation}
where the summation is taken over all disjoint subsets $I_{1},\ldots,I_{j}$
such that $\cup_{\alpha=1}^{j}I_{\alpha}=\{1,\cdots,r\}$. Using the
above formula for $\delta_{j}$, equation (\ref{eq:AlgLine1}) becomes
\begin{align}
 & \sum_{i_{1}+\cdots+i_{j}=r}\big(\sum_{l_{1}\geqslant i_{1},\ldots,l_{j}\geqslant i_{j}}Y_{s}^{l_{1}}X_{s,t}^{l_{1}-i_{1}}\boxtimes\cdots\boxtimes Y_{s}^{l_{j}}X_{s,t}^{l_{j}-i_{j}}\big)\sum_{(I_{\alpha})}\xi|_{I_{1}}\boxtimes\cdots\boxtimes\xi|_{I_{j}}\nonumber \\
 & =\sum_{i_{1}+\cdots+i_{j}=r}\sum_{(I_{\alpha})}\sum_{l_{1}\geqslant i_{1},\ldots,l_{j}\geqslant i_{j}}Y_{s}^{l_{1}}\big(X_{s,t}^{l_{1}-i_{1}}\otimes\xi|_{I_{1}}\big)\boxtimes\cdots\boxtimes Y_{s}^{l_{j}}\big(X_{s,t}^{l_{j}-i_{j}}\otimes\xi|_{I_{j}}\big).\label{eq:AlgLine2}
\end{align}

Since $Y_{s}^{l}$ acts on $V^{\otimes l}$ and sends on all other
elements to zero, we know that 
\[
Y_{s}^{l}\big(X_{s,t}^{l-i}\otimes\xi|_{I}\big)=0\qquad\text{if }\left|I\right|\neq i.
\]
Therefore, the summation $\sum_{(I_{\alpha})}$ in (\ref{eq:AlgLine2})
becomes a summation over all partitions $(I_{\alpha})_{\alpha=1}^{j}$
of $\{1,\ldots r\}$ such that $\left|I_{\alpha}\right|=i_{\alpha}$
for all $\alpha$. As a result, we can write 
\[
\sum_{i_{1}+\cdots+i_{j}=r}\sum_{(I_{\alpha}){}_{\alpha=1}^{j}:|I_{\alpha}|=i_{\alpha}\,\forall\alpha}=\sum_{(I_{\alpha}){}_{\alpha=1}^{j}:|I_{\alpha}|\geqslant1\ \forall\alpha},
\]
where the right hand side denotes the summation over all partitions
$(I_{\alpha})_{\alpha=1}^{j}$ of $\{1,\cdots,r\}$ such that $\left|I_{\alpha}\right|\geqslant1$
for each $\alpha$. Moreover, as $Y_{s}^{l}(X_{s,t}^{q}\otimes\xi|_{I})=0$
unless $q=l-|I|$, we have 
\[
Y_{s}^{l}\big(X_{s,t}^{q}\otimes\xi|_{I}\big)=Y_{s}^{l}\big(\mathbf{X}_{s,t}\otimes\xi|_{I}\big).
\]
Note finally that as $\mathbf{X}_{s,t}\otimes\xi|_{I}$ has degree
at least $\left|I\right|$,
\[
Y_{s}^{l}\big(\mathbf{X}_{s,t}\otimes\xi|_{I}\big)=0\qquad\text{if }l<\left|I\right|.
\]
Therefore, for each $1\leqslant\alpha\leqslant j$ the summation $\sum_{l_{\alpha}\geqslant|I_{\alpha}|}$
can be replaced by the unrestricted sum $\sum_{l_{\alpha}=1}^{N-1}$.

Taking into account the above considerations, equation (\ref{eq:AlgLine2})
now becomes 
\[
\hat{\eta}_{t}^{j}=\sum_{(I_{\alpha}){}_{\alpha=1}^{j}:I_{\alpha}\neq\emptyset\,\forall\alpha}\sum_{l_{1},\cdots,l_{j}=1}^{N-1}Y_{s}^{l_{1}}\big(\mathbf{X}_{s,t}\otimes\xi|_{I_{1}}\big)\boxtimes\cdots\boxtimes Y_{s}^{l_{j}}\big(\mathbf{X}_{s,t}\otimes\xi|_{I_{j}}\big).
\]
It follows that 
\begin{align}
 & \sum_{j=1}^{k}\frac{1}{j!(k-j)!}\big(\hat{Y}_{s,t}^{0}\big)^{\boxtimes(k-j)}\boxtimes\hat{\eta}_{t}^{j}\nonumber \\
 & =\sum_{j=1}^{k}\frac{1}{j!(k-j)!}\sum_{m_{1},\cdots,m_{k-j}=1}^{N-1}Y_{s}^{m_{1}}\mathbf{X}_{s,t}\boxtimes\cdots\boxtimes Y_{s}^{m_{k-j}}\mathbf{X}_{s,t}\nonumber \\
 & \ \ \ \boxtimes\sum_{(I_{\alpha}){}_{\alpha=1}^{j}:I_{\alpha}\neq\emptyset\,\forall\alpha}\sum_{l_{1},\cdots,l_{j}=1}^{N-1}Y_{s}^{l_{1}}\big(\mathbf{X}_{s,t}\otimes\xi|_{I_{1}}\big)\boxtimes\cdots\boxtimes Y_{s}^{l_{j}}\big(\mathbf{X}_{s,t}\otimes\xi|_{I_{j}}\big)\nonumber \\
 & =\sum_{j=1}^{k}\frac{1}{j!(k-j)!}\sum_{h_{1},\cdots,h_{k}=1}^{N-1}Y_{s}^{h_{1}}\mathbf{X}_{s,t}\boxtimes\cdots\boxtimes Y_{s}^{h_{k-j}}\mathbf{X}_{s,t}\nonumber \\
 & \ \ \ \boxtimes\sum_{(I_{\alpha}){}_{\alpha=1}^{j}:I_{\alpha}\neq\emptyset\,\forall\alpha}Y_{s}^{h_{k-j+1}}\big(\mathbf{X}_{s,t}\otimes\xi|_{I_{1}}\big)\boxtimes\cdots\boxtimes Y_{s}^{h_{k}}\big(\mathbf{X}_{s,t}\otimes\xi|_{I_{j}}\big).\label{eq:AlgLine4}
\end{align}

As the next observation, let $\left(H_{i}\right)_{i=1}^{k}$ be a
partition of $\{1,\cdots,r\}$. If there exist $\beta_{1}<\cdots<\beta_{j}$
such that $H_{\beta_{i}}=I_{i}$ for $1\leqslant i\leqslant j$ and
$H_{i}=\emptyset$ for $i\notin\{\beta_{1},\cdots,\beta_{r}\}$, then
\begin{align*}
 & \sum_{h_{1},\cdots,h_{k}=1}^{N-1}Y_{s}^{h_{1}}\mathbf{X}_{s,t}\boxtimes\cdots\boxtimes Y_{s}^{h_{k-j}}\mathbf{X}_{s,t}\boxtimes Y_{s}^{h_{k-j+1}}\big(\mathbf{X}_{s,t}\otimes\xi|_{I_{1}}\big)\boxtimes\cdots\boxtimes Y_{s}^{h_{k}}\big(\mathbf{X}_{s,t}\otimes\xi|_{I_{j}}\big)\\
 & \ \ \ \stackrel{\mathrm{s}}{=}\sum_{h_{1},\cdots,h_{k}=1}^{N-1}Y_{s}^{h_{1}}\big(\mathbf{X}_{s,t}\otimes\xi|_{H_{1}}\big)\boxtimes\cdots\boxtimes Y_{s}^{h_{k}}\big(\mathbf{X}_{s,t}\otimes\xi|_{H_{k}}\big).
\end{align*}
There are a total of ${k \choose j}$ such partitions $(H_{i})_{i=1}^{k}$
for each given $j$-tuple $(I_{1},\cdots,I_{j})$. As a result, we
have
\begin{align*}
 & \sum_{h_{1},\cdots,h_{k}=1}^{N-1}\sum_{(H_{i}):\exists\beta_{1}<\cdots<\beta_{j},H_{\beta_{i}}=I_{i}\,\forall i}Y_{s}^{h_{1}}\big(\mathbf{X}_{s,t}\otimes\xi|_{H_{1}}\big)\boxtimes\cdots\boxtimes Y_{s}^{h_{k}}\Big(\mathbf{X}_{s,t}\otimes\xi|_{H_{k}}\big)\\
 & \ \ \ \stackrel{\mathrm{s}}{=}\sum_{h_{1},\cdots,h_{k}=1}^{N-1}{k \choose j}Y_{s}^{h_{1}}\mathbf{X}_{s,t}\boxtimes\cdots\boxtimes Y_{s}^{h_{k-j}}\mathbf{X}_{s,t}\boxtimes Y_{s}^{h_{k-j+1}}\big(\mathbf{X}_{s,t}\otimes\xi|_{I_{1}})\\
 & \ \ \ \ \ \ \boxtimes\cdots\boxtimes Y_{s}^{h_{k}}\big(\mathbf{X}_{s,t}\otimes\xi|_{I_{j}}\big).
\end{align*}
Since the summation
\[
\sum_{j=1}^{k}\sum_{(I_{\alpha}){}_{\alpha=1}^{j}:I_{\alpha}\neq\emptyset\,\forall\alpha}\sum_{(H_{i}):\exists\beta_{1}<\cdots<\beta_{j},H_{\beta_{i}}=I_{i}\,\forall i}
\]
is equivalent to summing over all partitions $\left(H_{i}\right)_{j=1}^{k}$
of $\{1,\cdots,r\},$ the expression in (\ref{eq:AlgLine4}) becomes
(up to permutation symmetry with respect to $\boxtimes$) 
\begin{align}
 & \frac{1}{k!}\sum_{h_{1},\cdots,h_{k}=1}^{N-1}\sum_{(H_{i})_{i=1}^{k}:\text{partition of }\{1,\cdots,r\}}Y_{s}^{h_{1}}\big(\mathbf{X}_{s,t}\otimes\xi|_{H_{1}}\big)\boxtimes\cdots\boxtimes Y_{s}^{h_{k}}\big(\mathbf{X}_{s,t}\otimes\xi|_{H_{k}}\big)\nonumber \\
 & =\frac{1}{k!}\sum_{h_{1},\cdots,h_{k}=1}^{N-1}Y_{s}^{h_{1}}\boxtimes\cdots\boxtimes Y_{s}^{h_{k}}\big(\big(\mathbf{X}_{s,t}\boxtimes\cdots\boxtimes\mathbf{X}_{s,t}\big)*\big(\sum_{(H_{i})_{i=1}^{k}}\xi|_{H_{1}}\boxtimes\cdots\boxtimes\xi|_{H_{k}}\big)\big).\label{eq:AlgLine5}
\end{align}
By using the formula (\ref{eq:CoproductShuffle}) for $\delta_{k}$
again, the expression in (\ref{eq:AlgLine5}) becomes 
\begin{align}
 & \frac{1}{k!}\sum_{h_{1},\cdots,h_{k}=1}^{N-1}Y_{s}^{h_{1}}\boxtimes\cdots\boxtimes Y_{s}^{h_{k}}\big(\big(\mathbf{X}_{s,t}\boxtimes\cdots\boxtimes\mathbf{X}_{s,t}\big)*\delta_{k}\big(v_{1}\otimes\cdots\otimes v_{r}\big)\big)\nonumber \\
 & =\frac{1}{k!}\sum_{l=r}^{N-1}\sum_{h_{1}+\cdots+h_{k}=l}^{N-1}Y_{s}^{h_{1}}\boxtimes\ldots\boxtimes Y_{s}^{h_{k}}\big(\big(\mathbf{X}_{s,t}\boxtimes\cdots\boxtimes\mathbf{X}_{s,t}\big)*\delta_{k}\big(v_{1}\otimes\cdots\otimes v_{r}\big)\big)+\Delta_{3;s,t}^{k},\label{eq:WithDelta3}
\end{align}
where $\Delta_{3;s,t}^{k}$ is defined to be the difference of the
two expressions in (\ref{eq:WithDelta3}).

Note that when $h_{1}+\cdots+h_{k}\leqslant N$, the operator $Y_{s}^{h_{1}}\boxtimes\cdots\boxtimes Y_{s}^{h_{k}}$
only acts non-trivially on elements of $\oplus_{l_{1}+\cdots+l_{k}\leqslant N}V^{\otimes l_{1}}\boxtimes\cdots\boxtimes V^{\otimes l_{k}}$.
Since ${\bf X}_{s,t}\in G^{(N)}(V)$, according to the shuffle product
formula (\ref{eq:ShufPropk}), for such $h_{i}$'s we have 
\begin{align*}
 & Y_{s}^{h_{1}}\boxtimes\cdots\boxtimes Y_{s}^{h_{k}}\big(\big(\mathbf{X}_{s,t}\boxtimes\cdots\boxtimes\mathbf{X}_{s,t}\big)*\delta_{k}\big(v_{1}\otimes\cdots\otimes v_{r}\big)\big)\\
 & =Y_{s}^{h_{1}}\boxtimes\cdots\boxtimes Y_{s}^{h_{k}}\big(\sum_{l_{1}+\cdots+l_{k}\leqslant N}\big(X_{s,t}^{l_{1}}\boxtimes\cdots\boxtimes X_{s,t}^{l_{k}}\big)*\delta_{k}\big(v_{1}\otimes\cdots\otimes v_{r}\big)\big)\\
 & =Y_{s}^{h_{1}}\boxtimes\cdots\boxtimes Y_{s}^{h_{k}}\big(\delta_{k}\big(\mathbf{X}_{s,t}\big)*\delta_{k}\big(v_{1}\otimes\cdots\otimes v_{r}\big)\big).
\end{align*}
Since $\delta_{k}$ is a $*$-homomorphism, the expression in (\ref{eq:WithDelta3})
becomes
\begin{align*}
\frac{1}{k!} & \sum_{l=r}^{N-1}\sum_{h_{1}+\ldots+h_{k}=l}^{N-1}Y_{s}^{h_{1}}\boxtimes\cdots\boxtimes Y_{s}^{h_{k}}\big(\delta_{k}\big(\mathbf{X}_{s,t}\otimes v_{1}\otimes\cdots\otimes v_{r}\big)\big)+\Delta_{3;s,t}^{k},
\end{align*}
which is precisely the right hand side of (\ref{eq:AlgLem}).
\end{proof}
Having the above algebraic considerations, we can now prove the main
result of this section. For the need in the study of RDEs in the next
section, we also establish a continuity estimate for Lipschitz transformations.
\begin{thm}
\label{lem:LipschitzLemma} (i) {[}Stability{]} Let ${\cal Y}$ be
a controlled rough path over $[0,T]$ with respect to ${\bf X}.$
Let $F=(F^{0},F^{1},\cdots,F^{N})$ be a $\gamma$-Lipschitz function
with $\gamma\in(N,N+1]$. Then the path $\mathcal{Z}=F(\text{\ensuremath{\mathcal{Y}}})$
as defined in (\ref{eq:GDerZ}) is a path controlled by $\mathbf{X}$
in the sense of Definition \ref{def:CRP}. In addition, we have 
\begin{equation}
\Vert F\left(\mathcal{Y}\right)\Vert_{\mathbf{X};\alpha}\leqslant\Vert F\Vert_{\text{Lip-}N}\cdot M\big(T,\max_{1\leqslant i\leqslant N-1}|Y_{0}^{i}|,\big\|\mathcal{Y}\big\|{}_{\mathbf{X};\alpha},\Vert\mathbf{X}\Vert_{\alpha}\big)\label{eq:SingleLipschitz}
\end{equation}
(ii) {[}Continuity estimate{]} Let $\mathcal{Y}$ and $\tilde{\mathcal{Y}}$
be paths over $\left[0,T\right]$ controlled by $\mathbf{X}$ and
$\tilde{\mathbf{X}}$ respectively, and let $F$ be $(N+1)$-Lipschitz.
Then we have

\begin{align}
 & d_{\mathbf{X},\tilde{\mathbf{X}};\alpha}\big(F\big(\mathcal{Y}\big),F\big(\tilde{\mathcal{Y}}\big)\big)\nonumber \\
 & \leqslant\Vert F\Vert_{\text{Lip-}(N+1)}\cdot M\big(T,\max_{1\leqslant i\leqslant N-1}|Y_{0}^{i}|,\max_{1\leqslant i\leqslant N-1}|\tilde{Y}_{0}^{i}|,\big\|\mathcal{Y}\big\|{}_{\mathbf{X};\alpha},\big\|\tilde{\mathcal{Y}}\big\|{}_{\tilde{\mathbf{X}};\alpha},\Vert\mathbf{X}\Vert_{\alpha},\Vert\tilde{\mathbf{X}}\Vert_{\alpha}\big)\nonumber \\
 & \ \ \ \times\big(\max_{0\leqslant i\leqslant N-1}|Y_{0}^{i}-\tilde{Y}_{0}^{i}|+d_{\mathbf{X},\tilde{\mathbf{X}};\alpha}\big(\mathcal{Y},\tilde{\mathcal{Y}}\big)+\rho_{\alpha}({\bf X},\tilde{{\bf X}})\big).\label{eq:LipschitzDifference}
\end{align}
In both parts, $M(\cdots)$ denotes a continuous function that is
increasing in every variable.
\end{thm}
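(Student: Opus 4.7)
The construction of $\mathcal{Z}$ via \eqref{eq:GDerZ} together with $Z^0 = F^0(Y^0)$ is already motivated in the preamble to Lemma \ref{lem:AlgLem}, and the key algebraic content has been absorbed into that lemma. It therefore remains to verify, for each $1 \leqslant r \leqslant N-1$, the controlled-path remainder estimate
\[
Z_{s,t}^r - \sum_{j=1}^{N-1-r} Z_s^{r+j} X_{s,t}^j = O\bigl(|t-s|^{(N-r)\alpha}\bigr),
\]
together with a corresponding bound for $r=0$, and then to extract the norm estimate \eqref{eq:SingleLipschitz}. The plan is to chain the identities leading up to \eqref{eq:AlgLem0} with Lemma \ref{lem:AlgLem}; this amounts to estimating three kinds of errors: (a) the Taylor remainder of each $F^j$ at $Y^0_s$, (b) the error in replacing $\eta_t^j$ by $\hat\eta_t^j$ (equivalently, expanding $Y^i_{s,t}$ via the controlled-path structure and collecting the $\mathcal{RY}^i$ terms) and replacing $Y_{s,t}^0$ by $\hat Y_{s,t}^0$, and (c) the compensation term $\Delta_{3;s,t}^k$ from \eqref{eq:Delta3}.

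For (a), the Taylor remainders $R_j(Y^0_s,Y^0_t)$ from Definition \ref{def:LipFunc} contribute factors of size $|Y_{s,t}^0|^{\gamma-j}\leqslant \|F\|_{\text{Lip-}\gamma}\|Y^0\|_\alpha^{\gamma-j}|t-s|^{(\gamma-j)\alpha}$, paired with tensors of controlled size; since $\gamma>N\geqslant N-r$, the contribution is of order at least $(N-r)\alpha$ in $|t-s|$. For (b), each substitution $Y^i_t-Y^i_s=\sum_j Y^{i+j}_s X^j_{s,t}+\mathcal{RY}^i_{s,t}$ introduces a remainder of regularity $(N-i)\alpha$; the surviving factors carry enough extra H\"older regularity (since the total tensor degree must hit $r$) that the net regularity is no worse than $(N-r)\alpha$. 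For (c), the key observation is that $Y_s^{h_\alpha}$ acts only on $V^{\otimes h_\alpha}$, so a nontrivial contribution to $\Delta_{3;s,t}^k$ requires the $X$-factor in slot $\alpha$ to be of degree $h_\alpha-|I_\alpha|$; the constraint $h_1+\cdots+h_k\geqslant N$ then forces the total $X$-degree to be at least $N-r$, giving again regularity $(N-r)\alpha$. Tracking constants through each of these estimates yields the polynomial-in-norm bound \eqref{eq:SingleLipschitz} with $\|F\|_{\text{Lip-}N}$ as the overall prefactor.

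\textbf{Part (ii): Continuity.} Given $\mathcal{Y}$ controlled by $\mathbf{X}$ and $\tilde{\mathcal{Y}}$ controlled by $\tilde{\mathbf{X}}$, I would apply the construction above to both and then bound the remainders of $\mathcal{Z}-\tilde{\mathcal{Z}}$ using Leibniz-style telescoping. Each term in $Z^r_s$ and in the decomposition of $Z^r_{s,t}$ is a product of an $F^j(Y^0_s)$ factor, several $Y^{i_\alpha}_s$ factors, and $X$-increments; its difference from the analogous term for $(\tilde{\mathcal{Y}},\tilde{\mathbf{X}})$ is expanded into a sum in which exactly one factor is replaced by its difference. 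Since $F$ is $(N+1)$-Lipschitz, each $F^j$ including the top-order $F^N$ is itself globally Lipschitz, so $|F^j(Y^0_s)-F^j(\tilde Y^0_s)|\leqslant \|F\|_{\text{Lip-}(N+1)}|Y^0_s-\tilde Y^0_s|$; this is precisely the missing input to close the estimate when a derivative of maximal order is differenced. Lemma \ref{lem: estimating delta Y_alpha} then converts each $\|Y^i-\tilde Y^i\|_\alpha$ into the target quantities $|\delta Y^i_0|$, $\|\delta\mathcal{R}^j\|_{(N-j)\alpha}$ and $\rho_\alpha(\mathbf{X},\tilde{\mathbf{X}})$. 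The three error types from Part (i) carry over term-by-term, yielding \eqref{eq:LipschitzDifference}.

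\textbf{Main obstacle.} The algebraic crux is already handled by Lemma \ref{lem:AlgLem}; what remains is bookkeeping, but the subtlest piece of it deserves flagging. In Part (i), the degree analysis for $\Delta_{3;s,t}^k$ must be uniform in $k$ and must convert the purely combinatorial constraint $h_1+\cdots+h_k\geqslant N$ into an actual H\"older gain on $\mathbf{X}$-factors. This relies on the support property of the $Y_s^{h_\alpha}$, which effectively transfers the "missing" tensor degree into path regularity; getting this accounting precisely right, term by term, is where I expect the arithmetic to be most delicate. For Part (ii), the principal difficulty is organizing the Leibniz expansion of every product in the formula for $Z^r$ so that all difference terms fit inside a single inequality; here it is essential to use the $(N+1)$-Lipschitz hypothesis, because only this gives the Lipschitz continuity of $F^N$ needed to match the linear-in-differences form of the right-hand side of \eqref{eq:LipschitzDifference}.
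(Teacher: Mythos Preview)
Your proposal is correct and follows essentially the same route as the paper: the three error types (a), (b), (c) you identify are exactly the paper's $\Delta_{1;s,t}$, $\Delta_{2;s,t}^k$, $\Delta_{3;s,t}^k$ (see equations \eqref{eq:Delta1}, \eqref{eq:DeltaTwoStep}, \eqref{eq:Delta3}), and the degree-counting argument you give for $\Delta_3$ is precisely the paper's. Your treatment of Part (ii) via Leibniz telescoping together with the $(N+1)$-Lipschitz hypothesis to control $F^N$-differences, and the appeal to Lemma \ref{lem: estimating delta Y_alpha} to convert $\|Y^i-\tilde Y^i\|_\alpha$ back to remainder norms, also matches the paper's proof; the only cosmetic difference is that the paper writes the Taylor remainder in integral form involving $F^N$ from the outset (rather than the Whitney remainder $R_j$), which makes the differencing in Part (ii) slightly cleaner but is not a substantive change of strategy.
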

\begin{proof}
To ease our discussion, we use the notation ``$\lesssim$'' to denote
an estimate up to a continuous increasing function $M$ in $T$, $\max_{1\leqslant i\leqslant N-1}|Y_{0}^{i}|$,
$\max_{1\leqslant i\leqslant N-1}|\tilde{Y}_{0}^{i}|$, $||\mathcal{Y}||_{{\bf X};\alpha}$,
$||\tilde{\mathcal{Y}}||_{\tilde{{\bf X}};\alpha}$, $\Vert\mathbf{X}\Vert_{\alpha}$,
$\Vert\tilde{\mathbf{X}}\Vert_{\alpha}$, which may differ from line
to line. We closely follow the notation used earlier in the algebraic
considerations. In particular, in order to prove the theorem, essentially
we need to keep track of the remainder from each of the notions ``$\dot{=}$''
appearing earlier.

First of all, as seen before, we can write 
\begin{align*}
Z_{t}^{r}(\xi) & =\sum_{k=0}^{N-1}F^{k}(Y_{s}^{0})(C_{k}+\Delta_{2;s,t}^{k})+\Delta_{1;s,t},\\
EZ_{s,t}^{r} & (\xi)=\sum_{k=0}^{N-1}F^{k}(Y_{s}^{0})(D_{k}-\Delta_{3;s,t}^{k}),
\end{align*}
Here 
\[
EZ_{s,t}^{r}(\xi)\triangleq(Z_{s}^{r}+Z_{s}^{r+1}X_{s,t}^{1}+\cdots+Z_{s}^{N-1}X_{s,t}^{N-1-r})(\xi),
\]
$C_{k}$, $D_{k}$ are defined by the left and right hand sides of
the algebraic identity (\ref{eq:AlgLem}) respectively. The remainders
$\Delta_{1;s,t},\Delta_{2;s,t}^{k}$ are associated with the notions
``$\dot{=}$'' appearing earlier and $\Delta_{3;,s,t}^{k}$ is defined
by (\ref{eq:Delta3}). To be precise, they are defined by the following
equations.

\vspace{2mm} \noindent (i) (cf. (\ref{eq:1=00003D}) and Taylor's
theorem with integral form remainder) 
\begin{align}
\Delta_{1;s,t} & \triangleq\sum_{j=1}^{r}\int_{0}^{1}\frac{(1-\theta)^{N-1-j}}{j!(N-1-j)!}F^{N}(Y_{s}^{0}+\theta Y_{s,t}^{0})\big((Y_{s,t}^{0})^{\boxtimes(N-j)}\boxtimes\eta_{t}^{j}\big){\rm d}\theta,\label{eq:Delta1}
\end{align}
where we recall that $\sum_{j=1}^{r}=\sum_{j=1}^{N-1}.$\\
(ii) (cf. (\ref{eq:2=00003D}))
\begin{align*}
\Delta_{2;s,t}^{k} & \triangleq\sum_{j=1}^{r}\frac{1}{j!(k-j)!}\sum_{i_{1}+\cdots+i_{j}=r}\big((Y_{s,t}^{0})^{\boxtimes(k-j)}\boxtimes\big(Y_{t}^{i_{1}}\boxtimes\cdots\boxtimes Y_{t}^{i_{j}}(\delta_{j}(\xi))\big)\\
 & \ \ \ -\big(Y_{s}^{1}X_{s,t}^{1}+\cdots+Y_{s}^{N-1}X_{s,t}^{N-1}\big)^{\boxtimes(k-j)}\boxtimes\big(EY_{s,t}^{i_{1}}\boxtimes\cdots\boxtimes EY_{s,t}^{i_{j}}(\delta_{j}(\xi))\big)\big),
\end{align*}
where 
\[
EY_{s,t}^{i}\triangleq Y_{s}^{i}+Y_{s}^{i+1}X_{s,t}^{1}+\cdots+Y_{s}^{N-1}X_{s,t}^{N-1-i}.
\]
According to Lemma \ref{lem:AlgLem}, we have
\[
F^{k}(Y_{s}^{0})(C_{k})=F^{k}(Y_{s}^{0})(D_{k}).
\]
It follows that 
\begin{equation}
{\cal RZ}_{s,t}^{r}=\sum_{k=1}^{N-1}F^{k}(Y_{s}^{0})\left(\Delta_{2;s,t}^{k}+\Delta_{3;s,t}^{k}\right)+\Delta_{1;s,t}.\label{eq:RemainderLip}
\end{equation}
Similar definitions and identities hold for the tilde-quantities.
We need to estimate the regularity of the $\Delta$'s.

As a standard way, we frequently use the simple inequality 
\begin{equation}
|ab-\tilde{a}\tilde{b}|\leqslant|a-\tilde{a}|\cdot|b|+|\tilde{a}|\cdot|b-\tilde{b}|.\label{eq:trivialmethod}
\end{equation}
Also note that $|F^{k}(Y_{s}^{0})|\lesssim\|F\|_{\text{Lip-}N}$.
As a consequence of Lemma \ref{lem: estimating delta Y_alpha} (without
the presence of $\tilde{{\cal Y}}$), we have 
\begin{equation}
|Y_{s,t}^{0}|\lesssim\left|t-s\right|^{\alpha},\quad|Y_{t}^{i}|\lesssim1\;\forall i\geqslant1.\label{eq:BoundGubinelli}
\end{equation}
From these considerations and the expression (\ref{eq:Delta1}) of
$\Delta_{1};s,t$, we see that 
\begin{equation}
|\Delta_{1;s,t}|\lesssim\|F\|_{\text{Lip-}N}\cdot|t-s|^{(N-r)\alpha}.\label{eq:FirstLipRemainder}
\end{equation}

For the term $\Delta_{2;s,t}^{k}$, by forming a telescoping sum it
boils down to estimating 
\begin{align}
 & \big((Y_{s,t}^{0})^{\boxtimes(k-j)}\big)\boxtimes\big(Y_{t}^{i_{1}}\boxtimes\cdots\boxtimes Y_{t}^{i_{j}}(\delta_{j}(\xi))-\big(Y_{s}^{1}X_{s,t}^{1}+\cdots+Y_{s}^{N-1}X_{s,t}^{N-1}\big)^{\boxtimes(k-j)}\nonumber \\
 & \ \ \ \boxtimes\big(EY_{s,t}^{i_{1}}\boxtimes\cdots\boxtimes EY_{s,t}^{i_{j}}(\delta_{j}(\xi))\big)\big)\nonumber \\
 & =(Y_{s,t}^{0}-Y_{s}^{1}X_{s,t}^{1}+\cdots+Y_{s}^{N-1}X_{s,t}^{N-1})\boxtimes(Y_{s,t}^{0})^{\boxtimes(k-j-1)}\boxtimes Y_{t}^{i_{1}}\cdots\boxtimes Y_{t}^{i_{j}}(\delta_{j}(\xi))+\cdots\nonumber \\
 & \ \ \ +\left(Y_{s}^{1}X_{s,t}^{1}+\cdots+Y_{s}^{N-1}X_{s,t}^{N-1}\right)^{\boxtimes(k-j)}\boxtimes\left(EY_{s,t}^{i_{1}}\boxtimes\cdots\boxtimes(Y_{t}^{i_{j}}-EY_{s,t}^{i_{j}})(\delta_{j}(\xi))\right).\label{eq:DeltaTwoStep}
\end{align}
Note that 
\[
\big|Y_{s,t}^{0}-Y_{s}^{1}X_{s,t}^{1}+\cdots+Y_{s}^{N-1}X_{s,t}^{N-1}\big|\lesssim\left|t-s\right|^{N\alpha}
\]
and 
\begin{align*}
\big|Y_{s}^{1}X_{s,t}^{1}+\cdots+Y_{s}^{N-1}X_{s,t}^{N-1}\big| & =\big|Y_{s,t}^{0}-\mathcal{R}\mathcal{Y}_{s,t}^{0}\big|\lesssim\left|t-s\right|^{\alpha}.
\end{align*}
In addition, for each $i\geqslant1$ we have
\begin{align*}
\big|EY_{s,t}^{i}\big| & =\big|Y_{s}^{i}+Y_{s}^{i+1}X_{s,t}^{1}+\cdots+Y_{s}^{N-1}X_{s,t}^{N-1-i}\big|\\
 & =\big|Y_{t}^{i}-\mathcal{R}\mathcal{Y}_{s,t}^{i}\big|\lesssim1,
\end{align*}
and for each $i\leqslant r$ we have
\[
\big|Y_{t}^{i}-EY_{s,t}^{i}\big|\lesssim\left|t-s\right|^{(N-r)\alpha}.
\]
Consequently, we see that

\begin{equation}
|\Delta_{2;s,t}^{k}|\lesssim|t-s|^{(N-r)\alpha}.\label{eq:SecondLipRemainder}
\end{equation}

We now estimate 
\begin{align*}
\Delta_{3;s,t}^{k} & =\frac{1}{k!}\sum_{h_{1}+\cdots+h_{k}\geqslant N}Y_{s}^{h_{1}}\boxtimes\cdots\boxtimes Y_{s}^{h_{k}}\big(\big(\mathbf{X}_{s,t}\boxtimes\cdots\boxtimes\mathbf{X}_{s,t}\big)*\delta_{k}\big(\xi\big)\big)\\
 & =\frac{1}{k!}\sum_{h_{1}+\cdots+h_{k}\geqslant N}Y_{s}^{h_{1}}\boxtimes\cdots\boxtimes Y_{s}^{h_{k}}\big(\big(\sum_{l_{1},\cdots,l_{k}=1}^{N}X_{s,t}^{l_{1}}\boxtimes\cdots\boxtimes X_{s,t}^{l_{k}}\big)*\delta_{k}\big(\xi\big)\big).
\end{align*}
Since $h_{1}+\ldots+h_{k}\geqslant N$ and $\xi\in V^{\otimes r}$,
the only non-zero terms are the ones when $l_{1}+\ldots+l_{k}\geqslant N-r$.
In this case, we see that 
\[
|X_{s,t}^{l_{1}}\boxtimes\cdots\boxtimes X_{s,t}^{l_{k}}|\lesssim(t-s)^{(N-r)\alpha}.
\]
Therefore, 
\begin{align*}
|\Delta_{3;s,t}^{k}| & \lesssim\sum_{h_{1}+\ldots+h_{k}\geqslant N}|Y_{s}^{h_{1}}|\cdots|Y_{s}^{h_{k}}|\cdot(t-s)^{(N-r)\alpha}\lesssim(t-s)^{(N-r)\alpha}.
\end{align*}

We particularly point out that the constant hidden within ``$\lesssim$''
is independent of $Y_{0}^{0}$, which will be important for RDE considerations
later on. From (\ref{eq:RemainderLip}), (\ref{eq:FirstLipRemainder})
and (\ref{eq:SecondLipRemainder}) we conclude that $\mathcal{Z}$
is controlled by $\mathbf{X}$ and the estimate (\ref{eq:SingleLipschitz})
follows.

To prove the second part the theorem, we need to estimate 
\begin{align*}
 & {\cal RZ}_{s,t}^{r}-{\cal R}\tilde{{\cal Z}}_{s,t}^{r}\\
 & =\sum_{k=1}^{N-1}\big(F^{k}(Y_{s}^{0})\big(\Delta_{2;s,t}^{k}\big)-F^{k}(\tilde{Y}_{s}^{0})\big(\tilde{\Delta}_{2;s,t}^{k}\big)\big)\\
 & \ \ \ +\sum_{k=1}^{N-1}\big(F^{k}(Y_{s}^{0})\big(\Delta_{3;s,t}^{k}\big)-F^{k}(\tilde{Y}_{s}^{0})\big(\tilde{\Delta}_{3;s,t}^{k}\big)\big)+\big(\Delta_{1;s,t}-\tilde{\Delta}_{1;s,t}\big)
\end{align*}
For this purpose, let us introduce 
\[
D({\bf X},{\cal Y};\tilde{{\bf X}},\tilde{{\cal Y}})\triangleq\rho_{\alpha}({\bf X},\tilde{{\bf X}})+d_{{\bf X},\tilde{{\bf X}};\alpha}({\cal Y},\tilde{{\cal Y}})+\sum_{i=0}^{N-1}|Y_{0}^{i}-\tilde{Y}_{0}^{i}|.
\]
Now it remains to establish the following set of estimates (for $1\leqslant k\leqslant N-1$):
\begin{align}
|F^{k}(Y_{s}^{0})-F^{k}(\tilde{Y}_{s}^{0})| & \lesssim\|F\|_{\text{Lip-}N}\cdot D({\bf X},{\cal Y};\tilde{{\bf X}},\tilde{{\cal Y}}),\nonumber \\
|\Delta_{1;s,t}-\tilde{\Delta}_{1;s,t}| & \lesssim\|F\|_{\text{Lip-}(N+1)}\cdot D({\bf X},{\cal Y};\tilde{{\bf X}},\tilde{{\cal Y}})\cdot|t-s|^{(N-r)\alpha},\label{eq:DeltaOneDifference}\\
|\Delta_{2;s,t}^{k}-\tilde{\Delta}_{2;s,t}^{k}| & \lesssim D({\bf X},{\cal Y};\tilde{{\bf X}},\tilde{{\cal Y}})\cdot|t-s|^{(N-r)\alpha},\label{eq:DeltaTwoDifference}\\
|\Delta_{3;s,t}^{k}-\tilde{\Delta}_{3;s,t}^{k}| & \lesssim D({\bf X},{\cal Y};\tilde{{\bf X}},\tilde{{\cal Y}})\cdot|t-s|^{(N-r)\alpha}.\label{eq:DeltaThreeDifference}
\end{align}

To see the first inequality, first note that
\begin{align*}
 & |F^{k}(Y_{s}^{0})-F^{k}(\tilde{Y}_{s}^{0})|\\
= & \big|\int_{0}^{1}F^{k+1}\big(Y_{s}^{0}+\theta\big(Y_{s}^{0}-\tilde{Y}_{s}^{0}\big)\big)\big(Y_{s}^{0}-\tilde{Y}_{s}^{0}\big)\mathrm{d}\theta\big|\\
\lesssim & \|F\|_{\text{Lip-}N}\cdot\big(\big|Y_{0}^{0}-\tilde{Y}_{0}^{0}\big|+\big\| Y^{0}-\tilde{Y}^{0}\big\|_{\alpha}\big).
\end{align*}
The inequality then follows from Lemma \ref{lem: estimating delta Y_alpha}.

For the inequality (\ref{eq:DeltaOneDifference}), according to its
expression (\ref{eq:Delta1}), it suffices to estimate 
\[
F^{N}(Y_{s}^{0}+\theta Y_{s,t}^{0})(\eta_{t}^{j}\boxtimes(Y_{s,t}^{0})^{\boxtimes(N-j)})-F^{N}(\tilde{Y}_{s}^{0}+\theta\tilde{Y}_{s,t}^{0})(\tilde{\eta}_{t}^{j}\boxtimes(\tilde{Y}_{s,t}^{0})^{\boxtimes(N-j)}),
\]
Recall from the definition (\ref{eq:TempTensor}) of $\eta_{t}^{j}$
that
\[
\eta_{t}^{j}-\tilde{\eta}_{t}^{j}=\sum_{i_{1}+\cdots+i_{j}=r}(Y_{t}^{i_{1}}\boxtimes\cdots\boxtimes Y_{t}^{i_{j}}-\tilde{Y}_{t}^{i_{1}}\boxtimes\cdots\boxtimes\tilde{Y}_{t}^{i_{j}})\circ\delta_{j}\left(\xi\right).
\]
For each $1\leqslant i\leqslant N-1$, we have 
\begin{align}
\big|Y_{t}^{i}-\tilde{Y}_{t}^{i}\big| & \leqslant\big|Y_{0}^{i}-\tilde{Y}_{0}^{i}\big|+\big|\big(Y_{t}^{i}-Y_{0}^{i}\big)-\big(\tilde{Y}_{t}^{i}-\tilde{Y}_{0}^{i}\big)\big|\nonumber \\
 & \leqslant\big|Y_{0}^{i}-\tilde{Y}_{0}^{i}\big|+\Vert Y^{i}-\tilde{Y}^{i}\Vert_{\alpha}T^{\alpha}\nonumber \\
 & \lesssim D({\bf X},{\cal Y};\tilde{{\bf X}},\tilde{{\cal Y}}).\label{eq:DifferenceGub}
\end{align}
Since $|Y_{t}^{i}|\lesssim1$, $|\tilde{Y}_{t}^{i}|\lesssim1$, it
follows that 
\[
\big|\eta_{t}^{j}-\tilde{\eta}_{t}^{j}\big|\lesssim D({\bf X},{\cal Y};\tilde{{\bf X}},\tilde{{\cal Y}}).
\]
On the other hand, according to Lemma \ref{lem: estimating delta Y_alpha}
we have
\begin{align*}
\big|Y_{s,t}^{0}-\tilde{Y}_{s,t}^{0}\big| & \leqslant\Vert Y^{0}-\tilde{Y}^{0}\Vert_{\alpha}\left(t-s\right)^{\alpha}\lesssim D({\bf X},{\cal Y};\tilde{{\bf X}},\tilde{{\cal Y}})\left(t-s\right)^{\alpha}.
\end{align*}
Therefore, we see that 
\begin{align*}
 & \big|F^{N}(Y_{s}^{0}+\theta Y_{s,t}^{0})-F^{N}(\tilde{Y}_{s}^{0}+\theta\tilde{Y}_{s,t}^{0})\big|\\
 & \leqslant\Vert F\Vert_{\text{Lip-}(N+1)}\big|Y_{s}^{0}+\theta Y_{s,t}^{0}-\big(\tilde{Y}_{s}^{0}+\theta\tilde{Y}_{s,t}^{0}\big)\big|\\
 & \leqslant\Vert F\Vert_{\text{Lip-}(N+1)}\big|Y_{s}^{0}+\theta Y_{s,t}^{0}-\big(\tilde{Y}_{s}^{0}+\theta\tilde{Y}_{s,t}^{0}\big)\big|\\
 & \leqslant\Vert F\Vert_{\text{Lip-}(N+1)}\big(\big|Y_{0}^{0}-\tilde{Y}_{0}^{0}\big|+\big|Y_{s,t}^{0}-\tilde{Y}_{s,t}^{0}\big|+\big|Y_{0,s}^{0}-\tilde{Y}_{0,s}^{0}\big|\big)\\
 & \leqslant\Vert F\Vert_{\text{Lip-}(N+1)}\big(\big|Y_{0}^{0}-\tilde{Y}_{0}^{0}\big|+2\Vert Y^{0}-\tilde{Y}^{0}\Vert_{\alpha}T^{\alpha}\big)\\
 & \lesssim\Vert F\Vert_{\text{Lip-}(N+1)}D({\bf X},{\cal Y};\tilde{{\bf X}},\tilde{{\cal Y}}).
\end{align*}
The inequality (\ref{eq:DeltaOneDifference}) thus follows (the regularity
$|t-s|^{(N-r)\alpha}$ comes from the fact that $j\leqslant r$ in
the summation (\ref{eq:Delta1})).

For the inequality (\ref{eq:DeltaTwoDifference}), to estimate $\Delta_{2;s,t}^{k}-\tilde{\Delta}_{2;s,t}^{k}$
we write this difference in the form of a telescoping sum that is
similar to (\ref{eq:DeltaTwoStep}) but also with the tilde-quantities.
We already have the required estimates for $Y_{s,t}^{0}-\tilde{Y}_{s,t}^{0}$
and $Y_{t}^{i}-\tilde{Y}_{t}^{i}$ when analyzing $\Delta_{1;s,t}$.
We also have 
\begin{align*}
 & \big|\big(Y_{s}^{1}X_{s,t}^{1}+\cdots+Y_{s}^{N-1}X_{s,t}^{N-1}\big)-\big(\tilde{Y}_{s}^{1}\tilde{X}_{s,t}^{1}+\cdots+\tilde{Y}_{s}^{N-1}\tilde{X}_{s,t}^{N-1}\big)\big|\\
 & \leqslant\big|Y_{s,t}^{0}-\tilde{Y}_{s,t}^{0}\big|+\big|\mathcal{R}\mathcal{Y}_{s,t}^{0}-\mathcal{R}\tilde{\mathcal{Y}}_{s,t}^{0}\big|\\
 & \lesssim D({\bf X},{\cal Y};\tilde{{\bf X}},\tilde{{\cal Y}})\big|t-s\big|^{\alpha},
\end{align*}
and 
\begin{align*}
\big|EY_{s,t}^{i}-E\tilde{Y}_{s,t}^{i}\big|\leqslant & \big|Y_{t}^{i}-EY_{s,t}^{i}-\big(\tilde{Y}_{t}^{i}-E\tilde{Y}_{s,t}^{i}\big)\big|+\big|Y_{t}^{i}-\tilde{Y}_{t}^{i}\big|\\
= & \big|\mathcal{R}\mathcal{Y}_{s,t}^{i}-\mathcal{R}\tilde{\mathcal{Y}}_{s,t}^{i}\big|+\big|Y_{t}^{i}-\tilde{Y}_{t}^{i}\big|\\
\lesssim & D({\bf X},{\cal Y};\tilde{{\bf X}},\tilde{{\cal Y}}).
\end{align*}
As a result, the desired estimate for $\Delta_{2;s,t}^{k}-\tilde{\Delta}_{2;s,t}^{k}$
follows.

For the last inequality (\ref{eq:DeltaThreeDifference}), we have
\begin{align*}
\\
\Delta_{3;s,t}^{k}-\tilde{\Delta}_{3;s,t}^{k} & =\frac{1}{k!}\sum_{h_{1}+\cdots+h_{k}\geqslant N}Y_{s}^{h_{1}}\boxtimes\cdots\boxtimes Y_{s}^{h_{k}}\big(\big(\sum_{l_{1},\cdots,l_{k}=1}^{N}X_{s,t}^{l_{1}}\boxtimes\cdots\boxtimes X_{s,t}^{l_{k}}\big)*\delta_{k}\big(\xi\big)\big)\\
 & \ \ \ -\tilde{Y}_{s}^{h_{1}}\boxtimes\cdots\boxtimes\tilde{Y}_{s}^{h_{k}}\big(\big(\sum_{l_{1},\cdots,l_{k}=1}^{N}\tilde{X}_{s,t}^{l_{1}}\boxtimes\cdots\boxtimes\tilde{X}_{s,t}^{l_{k}}\big)*\delta_{k}\big(\xi\big)\big).
\end{align*}
Note that $\big|Y_{t}^{i}\big|\lesssim1$, $\big|\tilde{Y}_{t}^{i}\big|\lesssim1$
and $\big|Y_{t}^{i}-\tilde{Y}_{t}^{i}\big|\lesssim D({\bf X},{\cal Y};\tilde{{\bf X}},\tilde{{\cal Y}})$.
We also have $|X_{s,t}^{l}|\lesssim(t-s)^{l\alpha}$, $|\tilde{X}_{s,t}^{l}|\lesssim(t-s)^{l\alpha}$
and $|X_{s,t}^{l}-\tilde{X}_{s,t}^{l}|\lesssim D({\bf X},{\cal Y};\tilde{{\bf X}},\tilde{{\cal Y}})(t-s)^{l\alpha}$.
Therefore, we obtain the desired inequality (\ref{eq:DeltaThreeDifference}).

Now the proof of the theorem is complete.
\end{proof}

\section{Continuity of rough integrals}

In this section, we study the integral $\int{\cal Z}d{\bf X}$ as
a controlled rough path and establish a continuity estimate. We fix
$\frac{1}{N+1}<\alpha<\beta\leqslant\frac{1}{N}\leqslant\frac{1}{2}.$
Given a partition $\mathcal{P}:s=t_{0}<t_{1}<\cdots<t_{n}=t$, we
set 
\[
|\mathcal{P}|\triangleq\max_{0\leqslant i\leqslant n-1}(t_{i+1}-t_{i}).
\]
All paths below are defined on $[0,T]$.
\begin{prop}
\label{lem:Integration-Lemma} (i) Let ${\bf X}$ be a $\beta$-Hölder
geometric rough path over $V,$ and let ${\cal Z}$ be an $\alpha$-Hölder
controlled rough path over ${\cal L}(V;U)$ with respect to ${\bf X}$.
Then the following limit exists: 
\begin{equation}
\int_{s}^{t}Z\mathrm{d}X\triangleq\lim_{|\mathcal{P}|\rightarrow0}\sum_{t_{i}\in\mathcal{P}}\sum_{k=1}^{N}Z_{t_{i}}^{k-1}X_{t_{i},t_{i+1}}^{k}.\label{eq:roughintegral}
\end{equation}
In addition, if we define $\int_{0}^{\cdot}{\cal Z}d{\bf X}\triangleq(I^{0},I^{1},\cdots,I^{N-1})$
by
\begin{equation}
I_{t}^{0}\triangleq\int_{0}^{t}Z{\rm d}X,\ I_{t}^{1}\triangleq Z_{t}^{0},\cdots,\ I_{t}^{N-1}\triangleq Z_{t}^{N-2},\label{eq:RoughIntegralControl}
\end{equation}
Then the path $\int_{0}^{\cdot}{\cal Z}\mathrm{d}X=\left(I^{0},I^{1},\ldots,I^{N-1}\right)$
is an $\alpha$-Hölder controlled rough path with respect to $\mathbf{X}$.\\
(ii) (Continuity estimates) Let $\mathbf{X}$ and $\tilde{\mathbf{X}}$
be $\beta$-Hölder geometric rough paths, and let ${\cal Z},\tilde{\mathcal{Z}}$
be paths controlled by $\mathbf{X}$ and $\tilde{\mathbf{X}}$ respectively.
We use $\mathcal{I}=\int_{0}^{\cdot}{\cal Z}\mathrm{d}{\bf X}$ and
$\tilde{\mathcal{I}}=\int_{0}^{\cdot}\tilde{{\cal Z}}\mathrm{d}\tilde{{\bf X}}$
to denote the controlled paths obtained by integrating $\mathcal{{\cal Z}}$
and $\tilde{\mathcal{{\cal Z}}}$ respectively. Then there exists
a function $M:[0,\infty)^{5}\rightarrow[0,\infty)$ that is continuous
and increasing in every variable, such that
\begin{align}
d_{\mathbf{X},\tilde{\mathbf{X}};\alpha}\big(\mathcal{I},\tilde{\mathcal{I}}\big)\leqslant & \max\big(T^{\alpha},T^{\beta-\alpha}\big)M\big(T,\big\|\mathbf{X}\big\|_{\beta},\Vert\tilde{\mathbf{X}}\Vert_{\beta},\big\|\tilde{\mathcal{Z}}\big\|_{\tilde{\mathbf{X}};\alpha},\big|\tilde{Z}_{0}^{N-1}\big|\big)\nonumber \\
 & \times\big(d_{\mathbf{X},\tilde{\mathbf{X}};\alpha}\big(\mathcal{Z},\tilde{\mathcal{Z}}\big)+\rho_{\beta}({\bf X},\tilde{{\bf X}})+\big|Z_{0}^{N-1}-\tilde{Z}_{0}^{N-1}\big|\big).\label{eq:ContEstRI}
\end{align}
\end{prop}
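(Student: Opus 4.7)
The plan is to apply a sewing-type construction. For Part (i), introduce the local germ
\[
\Xi_{s,t} \triangleq \sum_{k=1}^{N} Z_s^{k-1} X_{s,t}^k,
\]
so that the limit in (\ref{eq:roughintegral}) reads $\int_s^t Z\,\mathrm{d}X = \lim_{|\mathcal{P}|\to 0} \sum_{t_i \in \mathcal{P}} \Xi_{t_i,t_{i+1}}$. The decisive step is to estimate the three-point defect $\Xi_{s,u} + \Xi_{u,t} - \Xi_{s,t}$. Expanding $Z^{k-1}_u - Z^{k-1}_s = \sum_{j=1}^{N-k} Z_s^{k-1+j} X_{s,u}^j + \mathcal{RZ}^{k-1}_{s,u}$ from the controlled-path structure and expanding $X^k_{s,t}$ via the multiplicativity identity $\mathbf{X}_{s,u} \otimes \mathbf{X}_{u,t} = \mathbf{X}_{s,t}$, the leading contributions cancel algebraically and the defect reduces to sums of products $\mathcal{RZ}^\ell_{s,u} \otimes X^m_{u,t}$ whose total Hölder exponent is at least $N\alpha + \beta > 1$ (using $\alpha > 1/(N+1)$ and $\beta > \alpha$). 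A standard dyadic refinement then produces the Cauchy property of the Riemann sums and the uniform bound $|\int_s^t Z\,\mathrm{d}X - \Xi_{s,t}| \lesssim |t-s|^{N\alpha+\beta}$, with the implicit constant controlled by $\|\mathbf{X}\|_\beta$, $\|\mathcal{Z}\|_{\mathbf{X};\alpha}$ and $|Z^{N-1}_0|$.

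That $\mathcal{I}$ is controlled by $\mathbf{X}$ then reduces to the direct algebraic identities
\[
\mathcal{RI}^i_{s,t} = \mathcal{RZ}^{i-1}_{s,t} + Z^{N-1}_s X^{N-i}_{s,t} \ (1\leqslant i\leqslant N-1), \qquad \mathcal{RI}^0_{s,t} = \Bigl(\int_s^t Z\,\mathrm{d}X - \Xi_{s,t}\Bigr) + Z^{N-1}_s X^{N}_{s,t},
\]
each summand of which has the required $(N-i)\alpha$ regularity (using $\beta \geqslant \alpha$ for the $X$-terms and the sewing bound for the integral remainder). For Part (ii), the same decomposition applied on both sides reduces $\mathcal{RI}^i - \mathcal{R}\tilde{\mathcal{I}}^i$ ($i\geqslant 1$) to the difference of remainders plus $Z^{N-1}_s X^{N-i}_{s,t} - \tilde Z^{N-1}_s \tilde X^{N-i}_{s,t}$, which is handled term by term via $|ab-\tilde a\tilde b|\leqslant |a-\tilde a||b|+|\tilde a||b-\tilde b|$. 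The pointwise bound on $|Z^{N-1}_s - \tilde Z^{N-1}_s|$ comes from Lemma \ref{lem: estimating delta Y_alpha} together with the initial-data term $|Z^{N-1}_0 - \tilde Z^{N-1}_0|$. The prefactor $\max(T^\alpha, T^{\beta-\alpha})$ emerges naturally: the remainder contribution yields a factor $T^\alpha$ when converting $(N-i+1)\alpha$-regularity to $(N-i)\alpha$-regularity, while the $X$ contribution yields a factor $T^{(N-i)(\beta-\alpha)}$ from the mismatch between the $\beta$-regularity of $X^{N-i}$ and the $\alpha$-regularity of the output.

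The main obstacle is the top-level comparison $\mathcal{RI}^0 - \mathcal{R}\tilde{\mathcal{I}}^0$, which requires rerunning the sewing argument on the germ difference $\Xi - \tilde\Xi$. The plan is to expand
\[
(\Xi_{s,u} + \Xi_{u,t} - \Xi_{s,t}) - (\tilde\Xi_{s,u} + \tilde\Xi_{u,t} - \tilde\Xi_{s,t})
\]
piece by piece, using the same telescoping device as above to separate contributions carrying a factor of $\mathbf{X} - \tilde{\mathbf{X}}$ (controlled by $\rho_\beta(\mathbf{X},\tilde{\mathbf{X}})$) from those carrying a factor of $\mathcal{Z} - \tilde{\mathcal{Z}}$ (controlled by $d_{\mathbf{X},\tilde{\mathbf{X}};\alpha}(\mathcal{Z},\tilde{\mathcal{Z}})$ together with $|Z^{N-1}_0 - \tilde Z^{N-1}_0|$ via Lemma \ref{lem: estimating delta Y_alpha}). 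The resulting defect still has regularity $|t-s|^{N\alpha+\beta}$, and the linear (comparison) form of the sewing construction transfers the estimate to the actual difference of integrals. The most delicate bookkeeping is to verify that only $\|\mathbf{X}\|_\beta$, $\|\tilde{\mathbf{X}}\|_\beta$, $\|\tilde{\mathcal{Z}}\|_{\tilde{\mathbf{X}};\alpha}$ and $|\tilde Z^{N-1}_0|$ appear in the final $M$, which is accomplished by repeatedly applying $\|\mathcal{Z}\|_{\mathbf{X};\alpha} \leqslant \|\tilde{\mathcal{Z}}\|_{\tilde{\mathbf{X}};\alpha} + d_{\mathbf{X},\tilde{\mathbf{X}};\alpha}(\mathcal{Z},\tilde{\mathcal{Z}})$ and its analogue for the driving rough paths, with any quadratic contributions absorbed into the growing function $M$.
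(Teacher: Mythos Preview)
Your approach is correct and essentially coincides with the paper's: both reduce to the identity
\[
\Xi_{s,u}+\Xi_{u,t}-\Xi_{s,t}=\sum_{k=1}^{N}\mathcal{RZ}^{k-1}_{s,u}\,X^{k}_{u,t},
\]
which is exactly the paper's point-removal formula (\ref{eq:CtyRI}), and your sewing/dyadic refinement is the standard alternative to the paper's greedy point-deletion. Your explicit identities for $\mathcal{RI}^{i}$ with $i\geqslant1$ are in fact cleaner than the paper's treatment, which leaves those levels implicit.

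One step to tighten: the final sentence about ``absorbing quadratic contributions into $M$'' does not work as written, because $M$ in (\ref{eq:ContEstRI}) is not allowed to depend on $d_{\mathbf{X},\tilde{\mathbf{X}};\alpha}(\mathcal{Z},\tilde{\mathcal{Z}})$ or on $\rho_\beta(\mathbf{X},\tilde{\mathbf{X}})$, so a term like $d_{\mathbf{X},\tilde{\mathbf{X}};\alpha}(\mathcal{Z},\tilde{\mathcal{Z}})^{2}$ cannot be bounded by $M\cdot d_{\mathbf{X},\tilde{\mathbf{X}};\alpha}(\mathcal{Z},\tilde{\mathcal{Z}})$ in general. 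The fix is that the substitution $\|\mathcal{Z}\|_{\mathbf{X};\alpha}\leqslant\|\tilde{\mathcal{Z}}\|_{\tilde{\mathbf{X}};\alpha}+d_{\mathbf{X},\tilde{\mathbf{X}};\alpha}(\mathcal{Z},\tilde{\mathcal{Z}})$ is never needed: provided you apply the telescoping device in the order $|ab-\tilde a\tilde b|\leqslant|a-\tilde a|\,|b|+|\tilde a|\,|b-\tilde b|$ with $a=\mathcal{RZ}^{k-1}$ and $b=X^{k}$, the coefficient on the $d$-term is $\|\mathbf{X}\|_\beta$ and the coefficient on the $\rho$-term is $\|\tilde{\mathcal{Z}}\|_{\tilde{\mathbf{X}};\alpha}$, both already admissible in $M$. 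This is precisely how the paper arranges things, and with that ordering the estimate is linear from the outset.
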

\begin{proof}
Let $s<t$ be fixed. Given any partition ${\cal P}$ of $[s,t]$,
we denote 
\[
\int_{\mathcal{P}}Z\mathrm{d}X\triangleq\sum_{t_{i}\in\mathcal{P}}\sum_{k=1}^{N}Z_{t_{i}}^{k-1}X_{t_{i},t_{i+1}}^{k}
\]
 Then 
\begin{align*}
 & \int_{\mathcal{P}}Z\mathrm{d}X-\int_{\mathcal{P}\backslash\{t_{j}\}}Z\mathrm{d}X\\
 & =\sum_{k=1}^{N}Z_{t_{j-1}}^{k-1}X_{t_{j-1},t_{j}}^{k}+\sum_{k=1}^{N}Z_{t_{j}}^{k-1}X_{t_{j},t_{j+1}}^{k}-\sum_{k=1}^{N}Z_{t_{j-1}}^{k-1}X_{t_{j-1},t_{j+1}}^{k}\\
 & =\sum_{k=1}^{N}Z_{t_{j-1}}^{k-1}X_{t_{j-1},t_{j}}^{k}+\sum_{k=1}^{N}Z_{t_{j}}^{k-1}X_{t_{j},t_{j+1}}^{k}-\sum_{k=1}^{N}\sum_{l=0}^{k}Z_{t_{j-1}}^{k-1}X_{t_{j-1},t_{j}}^{k-l}\otimes X_{t_{j},t_{j+1}}^{l}\\
 & =\sum_{k=1}^{N}Z_{t_{j}}^{k-1}X_{t_{j},t_{j+1}}^{k}-\sum_{k=1}^{N}\sum_{l=1}^{k}Z_{t_{j-1}}^{k-1}X_{t_{j-1},t_{j}}^{k-l}\otimes X_{t_{j},t_{j+1}}^{l}.
\end{align*}
We claim that the last expression is equal to $\sum_{k=1}^{N}\mathcal{R}\mathcal{Z}_{t_{j-1},t_{j}}^{k-1}\otimes X_{t_{j},t_{j+1}}^{k}$.
Indeed, by writing 
\[
Z_{t_{j}}^{k-1}=Z_{t_{j-1}}^{k-1}+\sum_{r=1}^{N-k}Z_{t_{j-1}}^{k+r-1}X_{t_{j-1},t_{j}}^{r}+{\cal R}{\cal Z}_{t_{j-1},t_{j}}^{k-1},
\]
it is equivalent to seeing that 
\begin{align*}
 & \sum_{k=1}^{N}Z_{t_{j-1}}^{k-1}X_{t_{j},t_{j+1}}^{k}+\sum_{k=1}^{N}\sum_{r=1}^{N-k}Z_{t_{j-1}}^{k+r-1}X_{t_{j-1},t_{j}}^{r}\otimes X_{t_{j},t_{j+1}}^{k}\\
 & \ \ \ -\sum_{k=1}^{N}\sum_{l=1}^{k}Z_{t_{j-1}}^{k-1}X_{t_{j-1},t_{j}}^{k-l}\otimes X_{t_{j},t_{j+1}}^{l}=0.
\end{align*}
The above equation follows by interchanging the order of summation
in the middle term. Consequently, we arrive at
\begin{equation}
\int_{\mathcal{P}}Z\mathrm{d}X-\int_{\mathcal{P}\backslash\{t_{j}\}}Z\mathrm{d}X=\sum_{k=1}^{N}\mathcal{R}\mathcal{Z}_{t_{j-1},t_{j}}^{k-1}\otimes X_{t_{j},t_{j+1}}^{k}.\label{eq:CtyRI}
\end{equation}

In the following argument, we directly consider the continuity estimate.
The case of the single ${\cal I}$ (without the tilde-paths) is easier
and only requires minor modification. Using (\ref{eq:CtyRI}), we
have 
\begin{align*}
 & \big|\int_{\mathcal{P}}Z\mathrm{d}X-\int_{\mathcal{P}\backslash\{t_{j}\}}Z\mathrm{d}X-\big(\int_{\mathcal{P}}\tilde{Z}\mathrm{d}\tilde{X}-\int_{\mathcal{P}\backslash\{t_{j}\}}\tilde{Z}\mathrm{d}\tilde{X}\big)\big|\\
 & =\big|\sum_{k=1}^{N}\mathcal{R}\mathcal{Z}_{t_{j-1},t_{j}}^{k-1}\otimes X_{t_{j},t_{j+1}}^{k}-\mathcal{R}\tilde{\mathcal{Z}}_{t_{j-1},t_{j}}^{k-1}\otimes\tilde{X}_{t_{j},t_{j+1}}^{k}\big|\\
 & \leqslant\big(d_{{\bf X},\tilde{{\bf X}};\alpha}({\cal Z},\tilde{{\cal Z}})\big\|\mathbf{X}\big\|_{\alpha}+\big\|\tilde{\mathcal{Z}}\big\|_{\tilde{\mathbf{X}};\alpha}\rho_{\alpha}({\bf X},\tilde{{\bf X}})\big)\cdot\big(t_{j+1}-t_{j-1}\big)^{\left(N+1\right)\alpha}.
\end{align*}
As $\sum_{j=1}^{n-1}\left(t_{j+1}-t_{j-1}\right)\leqslant2(t-s)$,
we may choose a $j$ such that 
\[
t_{j+1}-t_{j-1}\leqslant\frac{2\left(t-s\right)}{n-1}.
\]
It follows that

\begin{align*}
 & \big|\int_{\mathcal{P}}Z\mathrm{d}X-\int_{\mathcal{P}\backslash\{t_{j}\}}Z\mathrm{d}X-\big(\int_{\mathcal{P}}\tilde{Z}\mathrm{d}\tilde{X}-\int_{\mathcal{P}\backslash\{t_{j}\}}\tilde{Z}\mathrm{d}\tilde{X}\big)\big|\\
 & \leqslant\big(\frac{2}{n-1}\big)^{(N+1)\alpha}\left(t-s\right)^{(N+1)\alpha}\big(d_{{\bf X},\tilde{{\bf X}};\alpha}({\cal Z},\tilde{{\cal Z}})\left\Vert \mathbf{X}\right\Vert _{\alpha}+\big\|\tilde{\mathcal{Z}}\big\|_{\tilde{\mathbf{X}};\alpha}\rho_{\alpha}({\bf X},\tilde{{\bf X}})\big).
\end{align*}
By successively removing partition points from $\mathcal{P}$, we
arrive at
\begin{align}
 & \big|\int_{\mathcal{P}}Z\mathrm{d}X-\int_{\{s,t\}}Z\mathrm{d}X-\big(\int_{\mathcal{P}}\tilde{Z}\mathrm{d}\tilde{X}-\int_{\{s,t\}}\tilde{Z}\mathrm{d}\tilde{X}\big)\big|\nonumber \\
 & \leqslant C_{N,\alpha}\left(t-s\right)^{\left(N+1\right)\alpha}\big(d_{{\bf X},\tilde{{\bf X}};\alpha}({\cal Z},\tilde{{\cal Z}})\left\Vert \mathbf{X}\right\Vert _{\alpha}+\big\|\tilde{\mathcal{Z}}_{\tilde{\mathbf{X}};\alpha}\big\|\rho_{\alpha}({\bf X},\tilde{{\bf X}})\big),\label{eq:GeneralMaximalBound}
\end{align}
where 
\[
C_{N,\alpha}\triangleq\sum_{n=3}^{\infty}\big(\frac{2}{n-1}\big)^{(N+1)\alpha}.
\]
The version of the inequality (\ref{eq:GeneralMaximalBound}) without
the tilde-paths is easily seen to be 
\begin{equation}
\big|\int_{{\cal P}}Z{\rm d}X-\int_{\{s,t\}}Z{\rm d}X\big|\leqslant C_{N,\alpha}\|{\cal Z}\|_{{\bf X};\alpha}\|{\bf X}\|_{\alpha}(t-s)^{(N+1)\alpha}.\label{eq:GenMaxIneNOTild}
\end{equation}

We now use the inequality (\ref{eq:GenMaxIneNOTild}) to show that
the limit 
\[
\lim_{|\mathcal{P}|\rightarrow0}\int_{\mathcal{P}}Z\mathrm{d}X
\]
exists. Let $\hat{\mathcal{P}}$ and $\tilde{\mathcal{P}}$ be partitions
over $[s,t]$, and let $\hat{\mathcal{P}}\vee\tilde{\mathcal{P}}$
be the partition obtained by taking a union of the partition points
from $\hat{\mathcal{P}}$ and $\tilde{\mathcal{P}}$. For each pair
$(s_{l},s_{l+1})$ of adjacent points in $\hat{\mathcal{P}}$, by
applying the estimate (\ref{eq:GenMaxIneNOTild}) to the partition
$\hat{\mathcal{P}}\vee\tilde{\mathcal{P}}\cap[s_{l},s_{l+1}]$, we
obtain that 
\begin{align*}
\big|\int_{\hat{\mathcal{P}}\vee\tilde{\mathcal{P}}\cap[s_{l},s_{l+1}]}Z\mathrm{d}X-\int_{\{s_{l},s_{l+1}\}}Z\mathrm{d}X\big| & \leqslant C_{N,\alpha}\big(s_{l+1}-s_{l}\big)^{\left(N+1\right)\alpha}\left\Vert \mathcal{Z}\right\Vert _{\tilde{\mathbf{X}};\alpha}\left\Vert \mathbf{X}\right\Vert _{\alpha}.
\end{align*}
By summing over $l$, we have
\begin{align*}
 & \big|\int_{\hat{\mathcal{P}}\vee\tilde{\mathcal{P}}}Z\mathrm{d}X-\int_{\hat{\mathcal{P}}}Z\mathrm{d}X\big|\\
 & \leqslant\sum_{l}\big|\int_{\hat{\mathcal{P}}\vee\tilde{\mathcal{P}}\cap[s_{l},s_{l+1}]}Z\mathrm{d}X-\int_{\{s_{l},s_{l+1}\}}Z\mathrm{d}X\big|\\
 & \leqslant C_{N,\alpha}|\hat{\mathcal{P}}|^{\left(N+1\right)\alpha-1}\left\Vert \mathcal{Z}\right\Vert _{\tilde{\mathbf{X}};\alpha}\left\Vert \mathbf{X}\right\Vert _{\alpha}(t-s).
\end{align*}
A similar inequality holds with $\hat{\mathcal{P}}$ replaced by $\tilde{\mathcal{P}}$.
Using the triangle inequality, we end up with an estimate for $\int_{\hat{\mathcal{P}}}Z\mathrm{d}X-\int_{\tilde{\mathcal{P}}}Z\mathrm{d}X$,
from which we can deduce the convergence of (\ref{eq:roughintegral})
using the Cauchy criterion.

Next, we establish the continuity estimate (\ref{eq:ContEstRI}).
By taking $|{\cal P}|\rightarrow0$ in (\ref{eq:GeneralMaximalBound})
and using the definition of ${\cal I},\tilde{{\cal I}}$, we have
\begin{align*}
 & \big|\int_{s}^{t}Z\mathrm{d}X-\sum_{k=1}^{N}Z_{s}^{k-1}X_{s,t}^{k}-\int_{s}^{t}\tilde{Z}\mathrm{d}\tilde{X}-\sum_{k=1}^{N}\tilde{Z}_{s}^{k-1}\tilde{X}_{s,t}^{k}\big|\\
 & =\big|I_{t}^{0}-\sum_{k=1}^{N-1}I_{s}^{k}X_{s,t}^{k}-\big(\tilde{I}_{t}^{0}-\sum_{k=1}^{N-1}\tilde{I}_{s}^{k}\tilde{X}_{s,t}^{k}\big)-\big(Z_{s}^{N-1}X_{s,t}^{N}-\tilde{Z}_{s}^{N-1}\tilde{X}_{s,t}^{N}\big)\big|\\
 & \leqslant C_{N,\alpha}\left(t-s\right)^{\left(N+1\right)\alpha}\big(d_{{\bf X},\tilde{{\bf X}};\alpha}({\cal Z},\tilde{{\cal Z}})\left\Vert \mathbf{X}\right\Vert _{\alpha}+\big\|\tilde{\mathcal{Z}}\big\|_{\tilde{\mathbf{X}};\alpha}\rho_{\alpha}({\bf X},\tilde{{\bf X}})\big).
\end{align*}
Note that 
\begin{align*}
 & \big|Z_{s}^{N-1}-\tilde{Z}_{s}^{N-1}\big|\cdot\big|X_{s,t}^{N}\big|+\big|\tilde{Z}_{s}^{N-1}\big|\cdot\big|X_{s,t}^{N}-\tilde{X}_{s,t}^{N}\big|\\
 & \leqslant\big\|\mathcal{R}Z^{N-1}-\mathcal{R}\tilde{Z}^{N-1}\big\|_{\alpha}\left\Vert \mathbf{X}\right\Vert _{\beta}\left(t-s\right)^{N\beta}T^{\alpha}+\big|Z_{0}^{N-1}-\tilde{Z}_{0}^{N-1}\big|\left\Vert \mathbf{X}\right\Vert _{\beta}\left(t-s\right)^{N\beta}\\
 & \ \ \ +\big\|\tilde{\mathcal{Z}}\big\|_{\tilde{\mathbf{X}};\alpha}\rho_{\beta}({\bf X},\tilde{{\bf X}})\left(t-s\right)^{N\beta}T^{\alpha}+\big|\tilde{Z}_{0}^{N-1}\big|\rho_{\beta}({\bf X},\tilde{{\bf X}})\left(t-s\right)^{N\beta}.
\end{align*}
Therefore, we obtain that 
\begin{align*}
 & \big|I_{t}^{0}-\sum_{k=1}^{N-1}I_{s}^{k}X_{s,t}^{k}-\big(\tilde{I}_{t}^{0}-\sum_{k=1}^{N-1}\tilde{I}_{s}^{k}\tilde{X}_{s,t}^{k}\big)\big|\\
 & \leqslant2C_{N,\alpha}\left(t-s\right)^{N\alpha}T^{\alpha}\big(d_{{\bf X},\tilde{{\bf X}};\alpha}({\cal Z},\tilde{{\cal Z}})\left\Vert \mathbf{X}\right\Vert _{\alpha}+\big\|\tilde{\mathcal{Z}}\big\|_{\tilde{\mathbf{X}};\alpha}\rho_{\beta}({\bf X},\tilde{{\bf X}})\big)\\
 & \ \ \ +\big|Z_{0}^{N-1}-\tilde{Z}_{0}^{N-1}\big|\left\Vert \mathbf{X}\right\Vert _{\beta}\left(t-s\right)^{N\beta}+\big|\tilde{Z}_{0}^{N-1}\big|\rho_{\beta}({\bf X},\tilde{{\bf X}})\left(t-s\right)^{N\beta}.
\end{align*}
The desired estimate (\ref{eq:ContEstRI}) thus follows. Note that
from the above estimate (the analogue without the tilde-paths), it
is clear that ${\cal I}$ is a controlled rough path (namely the remainders
have the desired regularity properties).
\end{proof}

\section{Rough differential equations}

We now proceed to establish existence, uniqueness and continuity of
solutions for the RDE 
\begin{equation}
d{\cal Y}_{t}=F({\cal Y}_{t}){\rm d}{\bf X}_{t}\label{eq:RDE}
\end{equation}
in the space of controlled rough paths. As a standard idea, this is
formulated as a fixed point problem for the transformation 
\[
{\cal M}:{\cal Y}\mapsto Y_{0}+\int F({\cal Y}){\rm d}{\bf X}.
\]
We first derive a continuity estimate for ${\cal M}.$ Using such
continuity estimate, we then show that ${\cal M}$ is a contraction
on a small time interval. The general case follows from a patching
argument.

Throughout the rest, let $\frac{1}{N+1}\leqslant\alpha<\beta<\frac{1}{N}\leqslant\frac{1}{2}$
be fixed. Let $F=(F^{0},\cdots,F^{N})$ be a given Lip-$(N+1)$ function
defined on $U$ and taking values in ${\cal L}(V;U)$.

\subsection{Composition of Lipschitz transform and rough integration}

In this subsection, we consider paths defined on $[0,\tau]$ ($\tau>0$
is given fixed).
\begin{lem}
\label{lem:ContractionEstimates} Let $\mathbf{X}$ and $\tilde{\mathbf{X}}$
be $\beta$-Hölder geometric rough paths over $V.$ Let ${\cal Y}$
and $\tilde{{\cal Y}}$ be $U$-valued paths controlled by $\mathbf{X}$
and $\tilde{\mathbf{X}}$ respectively. Define the controlled rough
path $\mathcal{J}=(J^{0},\cdots,J^{N-1})$ with respect to $\mathbf{X}$
in the following way
\begin{equation}
J_{t}^{0}\triangleq Y_{0}^{0}+\big(\int_{0}^{\cdot}F({\cal Y}){\rm d}{\bf X}\big)_{t}^{0},\ J_{t}^{i}=\big(\int_{0}^{\cdot}F\left(\mathcal{Y}\right)\mathrm{d}{\bf X}\big)_{t}^{i}\quad\text{for }i\geqslant1.\label{eq:JPath}
\end{equation}
Define $\tilde{{\cal J}}$ controlled by $\tilde{{\bf X}}$ in a similar
way. Then the following estimates hold true:

\begin{equation}
\big\|{\cal J}\big\|_{{\bf X};\alpha}\leqslant\max\big(\tau^{\alpha},\tau^{\beta-\alpha}\big)M\big(\tau,\Vert\mathbf{X}\Vert_{\beta},\max_{1\leqslant i\leqslant N-1}\big|Y_{0}^{i}\big|,\big\|\mathcal{Y}\big\|_{{\bf X};\alpha},\Vert F\Vert_{\text{Lip-}N}\big)\label{eq:CompEst}
\end{equation}
and

\begin{align}
d_{\mathbf{X},\tilde{{\bf X}};\alpha}\big(\mathcal{J},\tilde{\mathcal{J}}\big) & \leqslant\max\left(\tau^{\alpha},\tau^{\beta-\alpha}\right)M\big(\tau,\Vert F\Vert_{\text{Lip-}(N+1)},\max_{1\leqslant i\leqslant N-1}\big|Y_{0}^{i}\big|,\max_{1\leqslant i\leqslant N-1}\big|\tilde{Y}_{0}^{i}\big|,\nonumber \\
 & \ \ \ \Vert\mathcal{Y}\Vert_{\mathbf{X};\alpha},\Vert\tilde{\mathcal{Y}}\Vert_{\tilde{\mathbf{X}};\alpha},\Vert\mathbf{X}\Vert_{\beta},\Vert\tilde{\mathbf{X}}\Vert_{\beta}\big)\cdot\big(d_{\mathbf{X},\mathbf{X};\alpha}\big(\mathcal{Y},\tilde{\mathcal{Y}}\big)+\rho_{\beta}({\bf X},\tilde{{\bf X}})\nonumber \\
 & \ \ \ +\max_{0\leqslant i\leqslant N-1}\big|Y_{0}^{i}-\tilde{Y}_{0}^{i}\big|\big).\label{eq:CompCtyEst}
\end{align}
Here $M$ is a continuous function that is increasing in every variable.
\end{lem}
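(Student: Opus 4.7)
The plan is to view $\mathcal{J}$ as the composition of two already-analyzed constructions: first apply $F$ to the controlled rough path $\mathcal{Y}$ using Theorem \ref{lem:LipschitzLemma}, and then integrate the resulting controlled rough path against $\mathbf{X}$ using Proposition \ref{lem:Integration-Lemma}. Set $\mathcal{Z}=F(\mathcal{Y})\in\mathcal{D}_{\mathbf{X};\alpha}(\mathcal{L}(V;U))$ and $\tilde{\mathcal{Z}}=F(\tilde{\mathcal{Y}})\in\mathcal{D}_{\tilde{\mathbf{X}};\alpha}(\mathcal{L}(V;U))$. A direct comparison of the definitions (\ref{eq:RoughIntegralControl}) and (\ref{eq:JPath}) shows that $\mathcal{J}$ agrees with $\int_{0}^{\cdot}\mathcal{Z}\,\mathrm{d}\mathbf{X}$ except at the zeroth level, which differs by the constant $Y_{0}^{0}$. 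Hence $\|\mathcal{J}\|_{\mathbf{X};\alpha}=\|\int_{0}^{\cdot}\mathcal{Z}\,\mathrm{d}\mathbf{X}\|_{\mathbf{X};\alpha}$ and likewise for the $d_{\mathbf{X},\tilde{\mathbf{X}};\alpha}$-distance between $\mathcal{J}$ and $\tilde{\mathcal{J}}$ modulo the additional term $|Y_{0}^{0}-\tilde{Y}_{0}^{0}|$ coming from the zeroth remainder.

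The first step is to invoke Theorem \ref{lem:LipschitzLemma}(i) to bound $\|\mathcal{Z}\|_{\mathbf{X};\alpha}$ and $|Z_{0}^{N-1}|$ in terms of the data for $\mathcal{Y}$, and analogously for $\tilde{\mathcal{Z}}$. The quantitative continuity estimate (\ref{eq:LipschitzDifference}) of Theorem \ref{lem:LipschitzLemma}(ii) then controls $d_{\mathbf{X},\tilde{\mathbf{X}};\alpha}(\mathcal{Z},\tilde{\mathcal{Z}})$ and $|Z_{0}^{N-1}-\tilde{Z}_{0}^{N-1}|$ by $d_{\mathbf{X},\tilde{\mathbf{X}};\alpha}(\mathcal{Y},\tilde{\mathcal{Y}})+\rho_{\alpha}(\mathbf{X},\tilde{\mathbf{X}})+\max_{i}|Y_{0}^{i}-\tilde{Y}_{0}^{i}|$, up to a universal factor $M(\cdots)$. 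Feeding these estimates into Proposition \ref{lem:Integration-Lemma}(i) gives that $\mathcal{J}\in\mathcal{D}_{\mathbf{X};\alpha}(U)$ and into (\ref{eq:ContEstRI}) gives the right-hand side needed in (\ref{eq:CompCtyEst}), modulo the explicit $\max(\tau^{\alpha},\tau^{\beta-\alpha})$ prefactor.

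To extract that prefactor, the remainders of $\mathcal{J}$ have to be examined level by level. For $1\leqslant i\leqslant N-1$ the identity $J^{i}=Z^{i-1}$ gives
\[
\mathcal{R}\mathcal{J}_{s,t}^{i}=\mathcal{R}\mathcal{Z}_{s,t}^{i-1}+Z_{s}^{N-1}X_{s,t}^{N-i},
\]
and the first term yields a gain of $\tau^{\alpha}$ when passing from the natural regularity $(N-i+1)\alpha$ to the required $(N-i)\alpha$, while the second term yields a gain of $\tau^{(N-i)(\beta-\alpha)}\leqslant\max(1,\tau^{\beta-\alpha})$ by comparing the $\beta$-H\"older control of $X^{N-i}$ with the $\alpha$-H\"older requirement. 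For $i=0$, the maximal inequality (\ref{eq:GenMaxIneNOTild}) and its tilde-analogue in the proof of Proposition \ref{lem:Integration-Lemma} give
\[
\mathcal{R}\mathcal{J}_{s,t}^{0}=Z_{s}^{N-1}X_{s,t}^{N}+O\bigl((t-s)^{(N+1)\alpha}\bigr),
\]
so the same two gain exponents appear; combining them yields the common prefactor $\max(\tau^{\alpha},\tau^{\beta-\alpha})$ claimed in (\ref{eq:CompEst}) and (\ref{eq:CompCtyEst}).

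The main obstacle will be keeping the bookkeeping of the $\tau$-gains consistent across all levels $0\leqslant i\leqslant N-1$ and, in the continuity estimate, across the two separate kinds of ``difference'' terms, those involving $\mathcal{Z}-\tilde{\mathcal{Z}}$ and those involving $\mathbf{X}-\tilde{\mathbf{X}}$. In particular, the $\tau^{\beta-\alpha}$ factor must be pulled out of the terms where one compares $\beta$-H\"older norms of $\mathbf{X},\tilde{\mathbf{X}}$ (or $\rho_{\beta}$) against the $\alpha$-H\"older requirement on the remainders of $\mathcal{J}$, while the $\tau^{\alpha}$ factor comes from the integration step itself. Once these two sources of smallness are isolated, the estimate (\ref{eq:CompCtyEst}) follows by assembling the bounds coming from Theorem \ref{lem:LipschitzLemma}(ii) and Proposition \ref{lem:Integration-Lemma}(ii), absorbing all coefficients into a single continuous, increasing function $M(\cdots)$.
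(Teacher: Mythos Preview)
Your approach is exactly the paper's: set $\mathcal{Z}=F(\mathcal{Y})$, observe that $\mathcal{J}$ and $\int_0^\cdot \mathcal{Z}\,\mathrm{d}\mathbf{X}$ have identical remainders (the additive constant $Y_0^0$ cancels in $\mathcal{RJ}^0_{s,t}$, so there is \emph{no} extra $|Y_0^0-\tilde Y_0^0|$ term, contrary to what you write), then compose Theorem~\ref{lem:LipschitzLemma} with Proposition~\ref{lem:Integration-Lemma}.

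Two small corrections. First, the prefactor $\max(\tau^{\alpha},\tau^{\beta-\alpha})$ is already present in the conclusion (\ref{eq:ContEstRI}) of Proposition~\ref{lem:Integration-Lemma}(ii); you do not need to re-derive it by inspecting $\mathcal{RJ}^i$ level by level, so your last two paragraphs are redundant. Second, Theorem~\ref{lem:LipschitzLemma}(ii) bounds $d_{\mathbf{X},\tilde{\mathbf{X}};\alpha}(\mathcal{Z},\tilde{\mathcal{Z}})$ but says nothing about $|Z_0^{N-1}-\tilde Z_0^{N-1}|$; for that you must use the explicit polynomial formula (\ref{eq:GDerZ}) for $F(\mathcal{Y})^{N-1}_0$ in terms of $Y_0^0,\dots,Y_0^{N-2}$ and the Lipschitz bound on $F$, which is what the paper does.
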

\begin{rem}
The factor $\max(\tau^{\alpha},\tau^{\beta-\alpha})$ and the independence
of $Y_{0}^{0}$ in the function $M$ are both important for the patching
argument in the RDE context.
\end{rem}
\begin{proof}
Observe that ${\cal R}{\cal J}_{s,t}^{i}={\cal R}{\cal I}_{s,t}^{i}$
where ${\cal I}\triangleq\int_{0}^{\cdot}F({\cal Y})d{\bf X}.$ By
the integration estimate (cf. Lemma \ref{lem:Integration-Lemma})
with $\tilde{\mathcal{Z}}=0$ and $\tilde{\mathbf{X}}=\mathbf{X}$,
we have 
\begin{align}
\big\|{\cal J}\big\|_{{\bf X};\alpha} & \leqslant\max\big(\tau^{\alpha},\tau^{\beta-\alpha}\big)M\big(\tau,\Vert\mathbf{X}\Vert_{\beta}\big)\big(\Vert F\big(\mathcal{Y}\big)\Vert_{\mathbf{X};\alpha}+\big|F({\cal Y})_{0}^{N-1}\big|\big).\label{eq:InvarianceStep1}
\end{align}
Since $F(\mathcal{Y})_{0}^{N-1}$ can be expressed as a polynomial
of $Y_{0}^{1},\cdots,Y_{0}^{N-2}$ (cf (\ref{eq:GDerZ})), there is
a continuous increasing function $M$ such that 
\begin{equation}
\big|F({\cal Y})_{0}^{N-1}\big|\leqslant\Vert F\Vert_{\text{Lip-}N}M\big(\max_{1\leqslant i\leqslant N-1}\big|Y_{0}^{i}\big|\big).\label{eq:InitialN-1}
\end{equation}
 In addition, from Lemma \ref{lem:LipschitzLemma} we know that 
\begin{equation}
\Vert F\big(\mathcal{Y}\big)\Vert_{\mathbf{X};\alpha}\leqslant\Vert F\Vert_{\text{Lip-}N}M\big(\tau,\max_{1\leqslant i\leqslant N-1}\big|Y_{0}^{i}\big|,\big\|\mathcal{Y}\big\|_{{\bf X};\alpha},\Vert\mathbf{X}\Vert_{\alpha}\big).\label{eq:LipschitzSolution}
\end{equation}
Note that $\Vert\mathbf{X}\Vert_{\alpha}\leqslant M(T)\Vert\mathbf{X}\Vert_{\beta}$
for some increasing continuous function $M$. The inequality (\ref{eq:CompEst})
follows by putting the estimates (\ref{eq:InvarianceStep1}) and (\ref{eq:LipschitzSolution})
together.

For the continuity estimate, first note from Lemma \ref{lem:Integration-Lemma}
that

\begin{align}
 & d_{\mathbf{X},\tilde{{\bf X}};\alpha}\big(\mathcal{J},\tilde{\mathcal{J}}\big)\nonumber \\
 & =d_{\mathbf{X},\tilde{{\bf X}};\alpha}\big(\int_{0}^{\cdot}F(\mathcal{Y})\mathrm{d}{\bf X},\int_{0}^{\cdot}F(\tilde{\mathcal{Y}})\mathrm{d}\tilde{{\bf X}}\big)\nonumber \\
 & \leqslant\max\big(\tau^{\alpha},\tau^{\beta-\alpha}\big)M\big(\tau,\Vert\mathbf{X}\Vert_{\beta},\big\| F(\tilde{\mathcal{Y}})\big\|{}_{\tilde{{\bf X}};\alpha},\big|F(\tilde{{\cal Y}})_{0}^{N-1}\big|\big)\nonumber \\
 & \ \ \ \times\big(d_{\mathbf{X},\tilde{{\bf X}};\alpha}\big(F(\mathcal{Y}),F(\tilde{\mathcal{Y}})\big)+\rho_{\beta}({\bf X},\tilde{{\bf X}})+\big|F({\cal Y})_{0}^{N-1}-F(\tilde{{\cal Y}})_{0}^{N-1}\big|\big).\label{eq:CompCtyInt}
\end{align}
By (\ref{eq:InitialN-1}) and (\ref{eq:LipschitzSolution}), we have
\begin{equation}
\|F(\tilde{{\cal Y}})\|_{\tilde{{\bf X}};\alpha}\vee\big|F(\tilde{{\cal Y}})_{0}^{N-1}\big|\leqslant M\big(\tau,\|F\|_{\text{Lip-}N},\|\tilde{{\bf X}}\|_{\beta},\max_{1\leqslant i\leqslant N-1}\big|\tilde{Y}_{0}^{i}\big|,\|\tilde{{\cal Y}}\|_{\tilde{{\bf X}};\alpha}\big)\label{eq:CompCtyS0}
\end{equation}
with some continuous increasing function $M$. Moreover, from (\ref{eq:LipschitzDifference})
we have

\begin{align}
 & d_{\mathbf{X},\tilde{{\bf X}};\alpha}\big(F(\mathcal{Y}),F(\tilde{\mathcal{Y}})\big)\nonumber \\
 & \leqslant\Vert F\Vert_{\text{Lip-}N+1}M\big(\tau,\max_{1\leqslant i\leqslant N-1}\big|Y_{0}^{i}\big|,\max_{1\leqslant i\leqslant N-1}\big|\tilde{Y}_{0}^{i}\big|,\Vert\mathcal{Y}\Vert_{\mathbf{X};\alpha},\Vert\tilde{\mathcal{Y}}\Vert_{\tilde{\mathbf{X}};\alpha},\Vert\mathbf{X}\Vert_{\alpha},\Vert\tilde{\mathbf{X}}\Vert_{\alpha}\big)\nonumber \\
 & \ \ \ \times\big(d_{\mathbf{X},\tilde{{\bf X}};\alpha}\big(\mathcal{Y},\tilde{\mathcal{Y}}\big)+\max_{0\leqslant i\leqslant N-1}\big|Y_{0}^{i}-\tilde{Y}_{0}^{i}\big|+\rho_{\alpha}({\bf X},\tilde{{\bf X}})\big).\label{eq:CompCtyS1}
\end{align}
Also note from (\ref{eq:GDerZ}) that 
\begin{equation}
\big|F({\cal Y})_{0}^{N-1}-F(\tilde{{\cal Y}})_{0}^{N-1}\big|\leqslant M\big(\max_{1\leqslant i\leqslant N-1}\big|Y_{0}^{i}\big|\big)\cdot\Vert F\Vert_{\text{Lip-}N}\max_{0\leqslant i\leqslant N-1}\big|Y_{0}^{i}-\tilde{Y}_{0}^{i}\big|.\label{eq:CompCtyS2}
\end{equation}
The continuity estimate (\ref{eq:CompCtyEst}) follows by applying
(\ref{eq:CompCtyS0}) (\ref{eq:CompCtyS1}) and (\ref{eq:CompCtyS2})
to the inequality (\ref{eq:CompCtyInt}).
\end{proof}

\subsection{Existence, uniqueness and continuity of RDE solutions}

We first define the notion of solution for the RDE (\ref{eq:RDE}).
All paths are assumed to be defined on $[0,T]$ ($T>0$ is given fixed).
\begin{defn}
\label{def:DefinitionRDESolution} Let $\mathbf{X}$ be a $\beta$-Hölder
geometric rough path over $V$, and let $Y_{0}\in U$. We say that
${\cal Y}\in{\cal D}_{{\bf x};\alpha}(U)$ is a \textit{solution}
to the RDE (\ref{eq:RDE}) with initial condition $Y_{0}$, if 
\begin{align*}
Y_{t}^{0} & =Y_{0}+\big(\int_{0}^{\cdot}F\left(\mathcal{Y}\right)\mathrm{d}{\bf X}\big)_{t}^{0},\ Y_{t}^{i}=\big(\int_{0}^{\cdot}F\left(\mathcal{Y}\right)\mathrm{d}{\bf X}\big){}_{t}^{i}\qquad\text{for }1\leqslant i\leqslant N-1.
\end{align*}
\end{defn}
The main theorem in this part is stated as follows.
\begin{thm}
\label{thm:RDESol}(i) {[}Existence and uniqueness{]} Let $\mathbf{X}$
be a given $\beta$-Hölder geometric rough path over $V$. For each
$Y_{0}\in U$, there exists a unique solution ${\cal Y}\in{\cal D}_{{\bf X};\alpha}(U)$
to the RDE (\ref{eq:RDE}) in the sense of Definition \ref{def:DefinitionRDESolution}.\\
(ii) {[}Continuity estimate{]} Let $\mathbf{X}$ and $\tilde{\mathbf{X}}$
be $\beta$-Hölder geometric rough paths over $V$, and let $Y_{0},\tilde{Y}_{0}\in U$.
Suppose that 
\[
\|{\bf X}\|_{\beta}\vee\|\tilde{{\bf X}}\|_{\beta}\vee|Y_{0}|\vee|\tilde{Y}_{0}|\leqslant B
\]
with some constant $B>0.$ Let $\mathcal{Y}$ and $\tilde{\mathcal{Y}}$
be the solutions to (\ref{eq:RDE}) driven by $\mathbf{X}$ and $\tilde{\mathbf{X}}$
with initial conditions $Y_{0}$ and $\tilde{Y}_{0}$ respectively.
Then the following estimate holds true:
\begin{equation}
d_{\mathbf{X},\tilde{\mathbf{X}};\alpha}\big(\mathcal{Y},\tilde{\mathcal{Y}}\big)\leqslant M(T,B,\|F\|_{\text{Lip-}(N+1)})\big(\rho_{\beta}({\bf X},\tilde{{\bf X}})+\big|Y_{0}-\tilde{Y}_{0}\big|\big).\label{eq:CtyEstRDE}
\end{equation}
\end{thm}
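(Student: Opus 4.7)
The plan is to apply Banach's fixed-point theorem to the map
$\mathcal{M}:\mathcal{Y}\mapsto Y_{0}+\int_{0}^{\cdot}F(\mathcal{Y})\mathrm{d}\mathbf{X}$
on a short interval $[0,\tau]$ and then patch finitely many local solutions to cover $[0,T]$. To set up the fixed-point problem, observe that Definition \ref{def:DefinitionRDESolution} forces the initial derivative levels of any solution: $Y_{0}^{i}=F(\mathcal{Y})_{0}^{i-1}$ for $1\leqslant i\leqslant N-1$ is determined by $Y_{0}^{0}=Y_{0}$ via the polynomial formula (\ref{eq:GDerZ}), and is bounded in terms of $|Y_{0}|$ and $\|F\|_{\text{Lip-}N}$. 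Fixing these prescribed initial data together with a radius $R>0$, I consider the closed subset
\[
\mathcal{B}_{R,\tau}\triangleq\big\{\mathcal{Y}\in\mathcal{D}_{\mathbf{X};\alpha}(U)\big|_{[0,\tau]}:Y_{0}^{i}\text{ take the prescribed values, }\|\mathcal{Y}\|_{\mathbf{X};\alpha}\leqslant R\big\},
\]
which is a complete metric space under $\interleave\cdot\interleave_{\mathbf{X};\alpha}$ by Remark \ref{rem:CompletenessOfMetric}.

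The first step is to verify that $\mathcal{M}$ maps $\mathcal{B}_{R,\tau}$ into itself and is a strict contraction for sufficiently small $\tau$. The invariance estimate (\ref{eq:CompEst}) of Lemma \ref{lem:ContractionEstimates} bounds $\|\mathcal{M}(\mathcal{Y})\|_{\mathbf{X};\alpha}$ by $\max(\tau^{\alpha},\tau^{\beta-\alpha})$ times a constant depending only on $R$, $\|\mathbf{X}\|_{\beta}$, $|Y_{0}|$ and $\|F\|_{\text{Lip-}N}$, which falls below $R$ once $\tau$ is small. Similarly, applying (\ref{eq:CompCtyEst}) with $\tilde{\mathbf{X}}=\mathbf{X}$ and common initial data, so that $\rho_{\beta}$ and the initial-data differences drop out, yields a contraction factor below $\tfrac{1}{2}$ for $\tau$ small enough. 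Banach's theorem then produces a unique fixed point in $\mathcal{B}_{R,\tau}$; uniqueness in the full space follows because any solution satisfies an a priori bound on $\|\cdot\|_{\mathbf{X};\alpha}$ obtained from (\ref{eq:CompEst}) applied to the fixed-point identity, and hence lies in $\mathcal{B}_{R,\tau}$ for $R$ chosen large enough.

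The key to globalization is the observation recorded in the Remark following Lemma \ref{lem:ContractionEstimates}: the function $M$ is \emph{independent} of $Y_{0}^{0}$, while the forced initial derivative values $|Y_{0}^{i}|$ depend only on $|Y_{0}|$ and $\|F\|_{\text{Lip-}N}$, both of which remain controlled along the trajectory (the derivative paths $Y_{t}^{i}$ are uniformly bounded by a polynomial in $\|F\|_{\text{Lip-}N}$ through the RDE structure). Hence the contraction time $\tau$ can be chosen uniformly along the trajectory. Having solved on $[0,\tau]$, I restart from time $\tau$ with initial datum $Y_{\tau}^{0}$, obtain a solution on $[\tau,2\tau]$, and iterate; after at most $\lceil T/\tau\rceil$ steps the solution extends to all of $[0,T]$. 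Global uniqueness follows by local uniqueness on each subinterval.

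For the continuity estimate (\ref{eq:CtyEstRDE}), I apply (\ref{eq:CompCtyEst}) to the two fixed points $\mathcal{Y},\tilde{\mathcal{Y}}$ on $[0,\tau]$. Since both are fixed points, and since the differences $|Y_{0}^{i}-\tilde{Y}_{0}^{i}|$ for $i\geqslant1$ are controlled by $|Y_{0}-\tilde{Y}_{0}|$ and $\|F\|_{\text{Lip-}(N+1)}$ via (\ref{eq:GDerZ}), the contraction factor $\tfrac{1}{2}$ can be absorbed into the left-hand side, yielding
\[
d_{\mathbf{X},\tilde{\mathbf{X}};\alpha}(\mathcal{Y},\tilde{\mathcal{Y}})\leqslant C\big(\rho_{\beta}(\mathbf{X},\tilde{\mathbf{X}})+|Y_{0}-\tilde{Y}_{0}|\big)
\]
on $[0,\tau]$. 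The main technical obstacle is propagating this estimate across the patched subintervals: at each restart, the initial-data distance $|Y_{k\tau}^{0}-\tilde{Y}_{k\tau}^{0}|$ must be bounded in terms of data at the previous step, producing a Gr\"{o}nwall-type multiplicative accumulation over the $\lceil T/\tau\rceil$ patches. Because $\tau$ depends only on $B$ and $\|F\|_{\text{Lip-}(N+1)}$, the number of steps is finite and the resulting exponential factor is absorbed into the constant $M(T,B,\|F\|_{\text{Lip-}(N+1)})$, yielding (\ref{eq:CtyEstRDE}).
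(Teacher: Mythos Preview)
Your proof is correct and follows essentially the same strategy as the paper: local contraction via Lemma \ref{lem:ContractionEstimates}, a uniform choice of $\tau$ exploiting the $Y_{0}^{0}$-independence of the constants together with the fact that the forced derivative levels $Y_{t}^{i}$ are bounded purely in terms of $\|F\|_{\text{Lip-}N}$, and a Gr\"onwall-type iteration for the continuity estimate. The only place where the paper supplies substantive detail beyond your sketch is the patching step (their Lemma \ref{lem:PatchingOfControlledNorm}), which checks that concatenated controlled paths remain controlled and that the global $d_{\mathbf{X},\tilde{\mathbf{X}};\alpha}$ is bounded by the subinterval distances plus a cross term involving $\rho_{\alpha}(\mathbf{X},\tilde{\mathbf{X}})$ --- a point your outline uses implicitly but does not argue.
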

The rest of this subsection is devoted to the proof of Theorem \ref{thm:RDESol}.

\subsubsection{Local contraction}

We shall prove existence and uniqueness by using the Banach fixed
point theorem. Note that the ``constants'' appearing in the rough
integration and Lipschitz transformation estimates depend on $\Vert\mathcal{Y}\Vert_{\mathbf{X};\alpha}$.
As a result, the mapping ${\cal M}:{\cal Y}\mapsto Y_{0}+\int F({\cal Y})d{\bf X}$
can only be a contraction if we restrict ${\cal M}$ on a bounded
subset, say a unit ball. To determine the center $\mathcal{W}$ (as
a controlled rough path) of such a ball, it is natural to require
$W_{0}^{0}=Y_{0}$ as this is the given initial condition. The higher
order terms $W^{i}$ ($i\geqslant1$) are chosen such that $\mathcal{R}\mathcal{W}_{s,t}^{i}=0$.
This is formulated precisely in the following lemma.
\begin{lem}
\label{lem:ConstantControlled}Let $Y_{0}\in U$ be given. We set
\begin{align*}
W_{0}^{0} & \triangleq Y_{0},\ W_{0}^{1}\triangleq F^{0}(Y_{0}),
\end{align*}
and inductively
\begin{equation}
W_{0}^{r+1}\triangleq\sum_{j=0}^{N-1}\frac{F^{j}(Y_{0})}{j!}\big(\sum_{i_{1}+\ldots+i_{j}=r}\big(W_{0}^{i_{1}}\boxtimes\ldots\boxtimes W_{0}^{i_{j}}\big)\circ\delta_{j}\big)\in{\cal L}\big(V^{\otimes(r+1)};U\big).\label{eq:InductiveW}
\end{equation}
Define the path $\mathcal{W}=(W^{0},W^{1},W^{2},\ldots,W^{N-1})$
by 
\[
W_{t}^{i}(\xi)=W_{0}^{i}(\xi)+W_{0}^{i+1}\big(X_{0,t}^{1}\otimes\xi\big)+\cdots+W_{0}^{N-1}\big(X_{0,t}^{N-1-i}\otimes\xi\big).
\]
Then $\mathcal{W}$ is a controlled rough path with respect to $\mathbf{X}$.
More specifically, we have ${\cal RW}^{i}\equiv0$ for each $0\leqslant i\leqslant N-1$.
\end{lem}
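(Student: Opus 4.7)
The plan is to verify directly from the explicit formula for $W_t^i$ that the remainder $\mathcal{RW}^i_{s,t}$ vanishes identically for every $0\leqslant i\leqslant N-1$. The key algebraic input is Chen's identity $\mathbf{X}_{0,s}\otimes\mathbf{X}_{s,t}=\mathbf{X}_{0,t}$, which at each tensor level reads
\[
X_{0,t}^k \;=\; \sum_{l=0}^{k} X_{0,s}^l\otimes X_{s,t}^{k-l}, \qquad k\geqslant 1.
\]
The inductive construction of the $W_0^{r}$ via the formula (\ref{eq:InductiveW}) plays no role in the proof of the lemma itself; only the ansatz for $W_t^{i}$ in terms of the $W_0^{j}$'s and the iterated integrals of $\mathbf{X}$ is needed.

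I would handle the top level $i=N-1$ separately: the defining formula collapses to $W_t^{N-1}(\xi)=W_0^{N-1}(\xi)$, hence $W^{N-1}$ is constant and $\mathcal{RW}^{N-1}_{s,t}=0$ trivially. For $0\leqslant i\leqslant N-2$ and $\xi\in V^{\otimes i}$, the increment becomes
\[
W_t^i(\xi)-W_s^i(\xi)\;=\;\sum_{k=1}^{N-1-i}W_0^{i+k}\bigl((X_{0,t}^{k}-X_{0,s}^{k})\otimes\xi\bigr),
\]
and Chen's identity rewrites $X_{0,t}^{k}-X_{0,s}^{k}=X_{s,t}^{k}+\sum_{l=1}^{k-1}X_{0,s}^{l}\otimes X_{s,t}^{k-l}$. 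Independently, expanding each $W_s^{i+j}$ via the defining formula gives
\[
\sum_{j=1}^{N-1-i}W_s^{i+j}\bigl(X_{s,t}^{j}\otimes\xi\bigr)=\sum_{j=1}^{N-1-i}W_0^{i+j}(X_{s,t}^{j}\otimes\xi)+\sum_{j=1}^{N-1-i}\sum_{l=1}^{N-1-i-j}W_0^{i+j+l}\bigl(X_{0,s}^{l}\otimes X_{s,t}^{j}\otimes\xi\bigr).
\]
The last step is a reindexing: in the double sum arising from Chen's identity on the left-hand side, substitute $j=k-l$, which produces a bijection onto the double sum on the right-hand side. The two expressions then match term by term, giving $\mathcal{RW}^{i}_{s,t}=0$.

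The regularity hypothesis in Definition \ref{def:CRP} is then vacuous: since each $\mathcal{RW}^{i}$ vanishes, it is automatically of class $|t-s|^{(N-i)\alpha}$, and continuity of the paths $t\mapsto W_t^{i}$ is inherited from the continuity of the $X_{0,t}^{k}$. I do not expect a genuine obstacle here; the whole argument is a bookkeeping exercise organized around Chen's identity and a single change of summation index, with the only care needed being to keep track of the index ranges so that the reindexing is exact rather than ``up to remainder''.
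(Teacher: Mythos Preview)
Your proposal is correct and takes essentially the same approach as the paper: both arguments use Chen's identity $X_{0,t}^{k}=\sum_{l=0}^{k}X_{0,s}^{l}\otimes X_{s,t}^{k-l}$ together with a single reindexing of the resulting double sum to conclude $\mathcal{RW}^{i}\equiv0$. The only cosmetic difference is that the paper computes $W_t^i(\xi)=\sum_{k=0}^{N-1-i}W_s^{i+k}(X_{s,t}^{k}\otimes\xi)$ directly (keeping the $k=0$ term inside the sum and not treating $i=N-1$ separately), which makes the bookkeeping marginally shorter but is otherwise the same computation you outline.
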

\begin{rem}
The initial value ${\cal W}_{0}$ is canonically determined by $Y_{0}$
and $F.$
\end{rem}
\begin{proof}
Note that 
\begin{align*}
W_{t}^{i}(\xi) & =\sum_{j=i}^{N-1}W_{0}^{j}\big(X_{0,t}^{j-i}\otimes\xi\big)=\sum_{j=i}^{N-1}W_{0}^{j}\big(\sum_{k=0}^{j-i}X_{0,s}^{j-i-k}\otimes X_{s,t}^{k}\otimes\xi\big)\\
 & =\sum_{k=0}^{N-1-i}\sum_{j=i+k}^{N-1}W_{0}^{j}\big(X_{0,s}^{j-i-k}\otimes X_{s,t}^{k}\otimes\xi\big)=\sum_{k=0}^{N-1-i}W_{s}^{i+k}\big(X_{s,t}^{k}\otimes\xi\big)
\end{align*}
As a result, we have $\mathcal{R}\mathcal{W}_{s,t}^{i}=0$ for all
$s\leqslant t$ and $0\leqslant i\leqslant N-1$.
\end{proof}
The following lemma gives the local existence and uniqueness for the
RDE (\ref{eq:RDE}).
\begin{lem}
\label{lem:LocalExistenceAndUniqueness}Given $\tau>0$, let 
\[
\mathcal{B}_{\tau}\triangleq\big\{\mathcal{Y}\in\mathcal{D}_{\mathbf{X};\alpha}(U):\|{\cal Y}-{\cal W}\|_{{\bf X};\alpha}\leqslant1,\mathcal{Y}_{0}=\mathcal{W}_{0}\big\}
\]
Then there exists $\tau>0$ , which is independent of $Y_{0}$ and
depends only on $\alpha,\beta,\mathbf{X}$ and $\Vert F\Vert_{\text{Lip-}(N+1)}$,
such that:

\vspace{2mm}\noindent (i) the mapping ${\cal M}:{\cal Y}\mapsto{\cal J}$
sends $\mathcal{B}_{\tau}$ to $\mathcal{B}_{\tau}$, where ${\cal J}$
is the controlled rough path defined by (\ref{eq:JPath});\\
(ii) the mapping $\mathcal{M}$ is a contraction on $\mathcal{B}_{\tau}$
with respect to the norm $\interleave\cdot\interleave_{{\bf X};\alpha}$
defined by (\ref{eq:ControlNorm}).\\
(iii) The RDE (\ref{eq:RDE}) has a unique solution $\mathcal{Y}$
on $[0,\tau]$ satisfying
\begin{equation}
\|{\cal Y}\|_{{\bf X};\alpha}\leqslant1.\label{eq:BoundOnSolution}
\end{equation}
\end{lem}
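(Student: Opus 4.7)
The strategy is to apply the Banach fixed point theorem to $\mathcal{M}$ on $\mathcal{B}_\tau$, viewed as a complete metric space (it is a closed subset of the Banach space $\mathcal{D}_{\mathbf{X};\alpha}(U)$ via Remark \ref{rem:CompletenessOfMetric}). Three structural observations drive all the quantitative estimates. First, because $\mathcal{RW}^i \equiv 0$ by Lemma \ref{lem:ConstantControlled}, one has $\|\mathcal{Y}-\mathcal{W}\|_{\mathbf{X};\alpha} = \|\mathcal{Y}\|_{\mathbf{X};\alpha}$ for every $\mathcal{Y} \in \mathcal{B}_\tau$, so $\|\mathcal{Y}\|_{\mathbf{X};\alpha} \leqslant 1$ automatically. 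Second, iterating (\ref{eq:InductiveW}) shows that each $W_0^i$ with $i \geqslant 1$ is a polynomial expression in $F^0(Y_0),\dots,F^{N-1}(Y_0)$, hence bounded by a quantity depending only on $\|F\|_{\text{Lip-}N}$, uniformly in $Y_0$. Third, the right-hand sides of (\ref{eq:CompEst}) and (\ref{eq:CompCtyEst}) never involve $Y_0^0$; this is precisely what will make the $\tau$ chosen below independent of $Y_0$.

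For part (i), I first verify $\mathcal{J}_0 = \mathcal{W}_0$. The identity $J_0^0 = Y_0 = W_0^0$ is immediate from (\ref{eq:JPath}), and for $i \geqslant 1$, $J_0^i = F(\mathcal{Y})_0^{i-1}$. Substituting $Y_0^j = W_0^j$ into (\ref{eq:GDerZ}) with $r = i-1$ yields exactly the recursive expression (\ref{eq:InductiveW}) for $W_0^i$ (the $j=0$ contribution in (\ref{eq:InductiveW}) is empty for $r \geqslant 1$, matching the fact that the sum in (\ref{eq:GDerZ}) starts at $j=1$), so $J_0^i = W_0^i$. For the remainder bound, $\|\mathcal{M}\mathcal{Y} - \mathcal{W}\|_{\mathbf{X};\alpha} = \|\mathcal{J}\|_{\mathbf{X};\alpha}$ (again by $\mathcal{RW}\equiv 0$), and (\ref{eq:CompEst}) controls the latter with all its $M$-arguments bounded uniformly in $Y_0$ by the preceding observations. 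The prefactor $\max(\tau^\alpha, \tau^{\beta-\alpha})$ vanishes as $\tau \to 0$, so $\tau$ can be chosen small, depending only on $\alpha,\beta,\mathbf{X},\|F\|_{\text{Lip-}(N+1)}$, to force this bound to be $\leqslant 1$.

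For part (ii), I apply (\ref{eq:CompCtyEst}) with $\tilde{\mathbf{X}} = \mathbf{X}$ to two elements $\mathcal{Y},\tilde{\mathcal{Y}} \in \mathcal{B}_\tau$: the terms $\rho_\beta(\mathbf{X},\mathbf{X})$ and $|Y_0^i - \tilde{Y}_0^i|$ both vanish, leaving
\[
d_{\mathbf{X},\mathbf{X};\alpha}(\mathcal{M}\mathcal{Y},\mathcal{M}\tilde{\mathcal{Y}}) \leqslant \max(\tau^\alpha,\tau^{\beta-\alpha}) \cdot M \cdot d_{\mathbf{X},\mathbf{X};\alpha}(\mathcal{Y},\tilde{\mathcal{Y}})
\]
with $M$ uniform in $Y_0$. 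On $\mathcal{B}_\tau$ the quantities $\interleave \mathcal{Y} - \tilde{\mathcal{Y}}\interleave_{\mathbf{X};\alpha}$, $\|\mathcal{Y}-\tilde{\mathcal{Y}}\|_{\mathbf{X};\alpha}$ and $d_{\mathbf{X},\mathbf{X};\alpha}(\mathcal{Y},\tilde{\mathcal{Y}})$ all coincide (initial values agree and $\mathbf{X} = \tilde{\mathbf{X}}$), and similarly $\mathcal{M}\mathcal{Y},\mathcal{M}\tilde{\mathcal{Y}}$ both have initial value $\mathcal{W}_0$ by part (i). Shrinking $\tau$ further therefore yields a strict contraction with respect to $\interleave\cdot\interleave_{\mathbf{X};\alpha}$. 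Banach's theorem then produces a unique fixed point $\mathcal{Y} \in \mathcal{B}_\tau$, which by construction solves the RDE in the sense of Definition \ref{def:DefinitionRDESolution}.

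To extend uniqueness to the class $\{\mathcal{Y} : \|\mathcal{Y}\|_{\mathbf{X};\alpha} \leqslant 1\}$, I note that any such solution satisfies $Y_0^0 = Y_0 = W_0^0$ and, for $i \geqslant 1$, $Y_0^i = F(\mathcal{Y})_0^{i-1}$; the same inductive comparison of (\ref{eq:GDerZ}) with (\ref{eq:InductiveW}) as in part (i) then shows $Y_0^i = W_0^i$. Combined with $\|\mathcal{Y}-\mathcal{W}\|_{\mathbf{X};\alpha} = \|\mathcal{Y}\|_{\mathbf{X};\alpha} \leqslant 1$, this places $\mathcal{Y}$ in $\mathcal{B}_\tau$, and uniqueness is inherited from the Banach argument. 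The principal obstacle is the algebraic bookkeeping in part (i) that matches $F(\mathcal{Y})_0^{i-1}$ to $W_0^i$, together with the companion observation that the estimates of Lemma \ref{lem:ContractionEstimates} are insensitive to $Y_0^0$; this uniformity in the initial data is exactly what allows a single $\tau$ to serve all starting points and is crucial for the patching argument that subsequently promotes local to global existence.
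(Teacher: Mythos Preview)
Your proposal is correct and follows essentially the same route as the paper's proof: verify $\mathcal{J}_0=\mathcal{W}_0$ by matching (\ref{eq:GDerZ}) against (\ref{eq:InductiveW}), use $\mathcal{RW}\equiv 0$ to reduce $\|\mathcal{Y}-\mathcal{W}\|_{\mathbf{X};\alpha}$ to $\|\mathcal{Y}\|_{\mathbf{X};\alpha}$, then feed the uniform-in-$Y_0$ bounds into (\ref{eq:CompEst}) and (\ref{eq:CompCtyEst}) to obtain invariance and contraction for small $\tau$. Your final paragraph, showing that any solution with $\|\mathcal{Y}\|_{\mathbf{X};\alpha}\leqslant 1$ must already lie in $\mathcal{B}_\tau$, is a small but worthwhile addition that the paper leaves implicit.
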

\begin{proof}
(i) We first prove by induction that $W_{0}^{i}=J_{0}^{i}$ for all
$i$. The $i=0,1$ cases follow directly from the definition of $\mathcal{J}$
and $\mathcal{W}$. For the induction step, note that by the definition
of $\mathcal{J}$, if $\xi\in V^{\otimes\left(r+1\right)}$, then
\begin{align*}
J_{0}^{r+1}\left(\xi\right) & =\left(F\left({\cal Y}\right)\right)_{0}^{r}(\xi)\\
 & =\sum_{j=0}^{N-1}\frac{F^{j}(Y_{0})}{j!}\sum_{i_{1}+\cdots+i_{j}=r}\big(Y_{0}^{i_{1}}\boxtimes\cdots\boxtimes Y_{0}^{i_{j}}\big)\circ\delta_{j}(\mathcal{\xi})\\
 & =\sum_{j=0}^{N-1}\frac{F^{j}(Y_{0})}{j!}\sum_{i_{1}+\cdots+i_{j}=r}\big(W_{0}^{i_{1}}\boxtimes\cdots\boxtimes W_{0}^{i_{j}}\big)\circ\delta_{j}(\mathcal{\xi})\qquad(\text{since \ensuremath{{\cal Y}_{0}={\cal W}_{0}}})\\
 & =W_{0}^{r+1}\qquad(\text{by definition of }W_{0}^{r+1}).
\end{align*}
Therefore, ${\cal W}_{0}={\cal J}_{0}.$

Next, we recall from (\ref{eq:CompEst}) that
\[
\big\|{\cal J}\big\|_{{\bf X};\alpha}\leqslant\max\big(\tau^{\alpha},\tau^{\beta-\alpha}\big)M\big(\tau,\Vert\mathbf{X}\Vert_{\beta},\max_{1\leqslant i\leqslant N-1}\big|Y_{0}^{i}\big|,\big\|\mathcal{Y}\big\|{}_{\mathbf{X};\alpha},\Vert F\Vert_{\text{Lip-}N}\big).
\]
Since $\mathcal{R}\mathcal{W}_{s,t}^{i}=0$ by Lemma \ref{lem:ConstantControlled}
and ${\cal Y}\in{\cal B}_{\tau}$, we know that 
\[
\|{\cal Y}\|_{{\bf X};\alpha}=d_{\mathbf{X},\mathbf{X};\alpha}\big(\mathcal{Y},\mathcal{W}\big)\leqslant1.
\]
The inductive definition of $\mathcal{W}_{0}$ in (\ref{eq:InductiveW})
implies that there is a continuous increasing function $M$ such that
for $1\leqslant i\leqslant N-1$ 
\begin{equation}
\big|Y_{0}^{i}\big|=\big|W_{0}^{i}\big|\leqslant M\big(\Vert F\Vert_{\text{Lip-}(N-1)}\big).\label{eq:Y_0Bound}
\end{equation}
As a result, we can choose $\tau$ to be sufficiently small (depending
on $\|{\bf X}\|_{\beta}$ and $\|F\|_{\text{Lip-}N}$), such that
\begin{equation}
\max\big(\tau^{\alpha},\tau^{\beta-\alpha}\big)M\big(\tau,\Vert\mathbf{X}\Vert_{\beta},\Vert F\Vert_{\text{Lip-}N}\big)<1.\label{eq:ChoosingT}
\end{equation}
This ensures that ${\cal J}=\mathcal{M}(\mathcal{Y})\in\mathcal{B}_{\tau}$.
Note that the choice of $\tau$ is independent of $Y_{0}^{0}$.

(ii) Let $\mathcal{Y},\tilde{\mathcal{Y}}\in\mathcal{B}_{\tau}$.
By using Lemma \ref{lem:ContractionEstimates} and that $\mathcal{Y}_{0}=\tilde{\mathcal{Y}}_{0}$,
we have

\begin{align*}
d_{\mathbf{X},\mathbf{X};\alpha}\big(\mathcal{M}(\mathcal{Y}),\mathcal{M}(\tilde{\mathcal{Y}})\big)\leqslant & \max\big(\tau^{\alpha},\tau^{\beta-\alpha}\big)M\big(\tau,\Vert F\|_{\text{Lip-}(N+1)},\max_{1\leqslant i\leqslant N-1}\big|Y_{0}^{i}\big|,\\
 & \big\|\mathcal{Y}\big\|{}_{\mathbf{X};\alpha},\big\|\tilde{\mathcal{Y}}\big\|{}_{\mathbf{X};\alpha},\Vert\mathbf{X}\Vert_{\beta}\big)d_{\mathbf{X},\mathbf{X};\alpha}\big(\mathcal{Y},\tilde{\mathcal{Y}}\big).
\end{align*}
According to (\ref{eq:Y_0Bound}) and the fact that ${\cal Y},\tilde{{\cal Y}}\in{\cal B}_{\tau},$
we may further choose $\tau$ such that
\[
\max\big(\tau^{\alpha},\tau^{\beta-\alpha}\big)M\big(\tau,\Vert F\|_{\text{Lip-}(N+1)},\max_{1\leqslant i\leqslant N-1}\big|Y_{0}^{i}\big|,\big\|\mathcal{Y}\big\|{}_{\mathbf{X};\alpha},\big\|\tilde{\mathcal{Y}}\big\|{}_{\mathbf{X};\alpha},\Vert\mathbf{X}\Vert_{\beta}\big)<\frac{1}{2}.
\]
Therefore, we have
\[
\interleave{\cal M}({\cal Y})-{\cal M}(\tilde{{\cal Y}})\interleave_{{\bf X};\alpha}\leqslant\frac{1}{2}\interleave{\cal Y}-\tilde{{\cal Y}}\interleave_{{\bf X};\alpha},
\]
which shows that the mapping ${\cal M}:{\cal B}_{\tau}\rightarrow{\cal B}_{\tau}$
is a contraction for such choice of $\tau$.

(iii) Let $\tau$ be chosen as in Part (ii). Note that a solution
to the RDE (\ref{eq:RDE}) is a fixed point of the mapping $\mathcal{M}$.
Since ${\cal B}_{\tau}$ is a closed subset of the Banach space $({\cal D}_{{\bf X},\alpha}(U),\interleave\cdot\interleave_{{\bf X};\alpha}),$
by Part (ii) and the Banach fixed point theorem, we know that the
RDE (\ref{eq:RDE}) admits a unique solution ${\cal Y}\in{\cal B}_{\tau}$
as a controlled rough path on $[0,\tau].$ The inequality (\ref{eq:BoundOnSolution})
is just a consequence of $\mathcal{Y}\in\mathcal{B}_{\tau}$.
\end{proof}
\begin{rem}
\label{rem:Y0DetYi}It is interesting to point out that, if ${\cal Y}=(Y^{0},\cdots,Y^{N-1})$
is a solution to the RDE (\ref{eq:RDE}), then at each $t$ the values
$Y_{t}^{i}$ ($1\leqslant i\leqslant N-1$) are all canonically determined
by the value $Y_{t}^{0}$ of the $0$-th level path. Indeed, by Definition
\ref{def:DefinitionRDESolution} we have $Y_{t}^{i}=F({\cal Y})_{t}^{i-1}$
for all $i\geqslant1.$ The determination of $Y_{t}^{i}$ from $Y_{t}^{0}$
is through the same relation as (\ref{eq:InductiveW}). This observation
in the later patching argument.
\end{rem}

\subsubsection{A patching lemma}

In order to obtain global existence, we need to patch local solutions
in the sense controlled rough paths. The lemma below justifies the
patching of controlled rough paths in general.
\begin{lem}
\label{lem:PatchingOfControlledNorm} (i) Let ${\bf X}$ be a geometric
rough path on $[a,b]$ and let $u\in(a,b)$ be fixed. Let ${\cal Y}$
be a continuous path on $[a,b]$ such that ${\cal Y}|_{[a,u]}$ (respectively,
${\cal Y}|_{[u,b]}$) is controlled by ${\bf X}|_{[a,u]}$ (respectively,
by ${\bf X}|_{[u,b]}$). Then ${\cal Y}$ is controlled by ${\bf X}$
on $[a,b]$.\\
(ii) Let ${\bf X}$, $\tilde{{\bf X}}$ be $\beta$-H\"older geometric
rough paths on $[a,b]$ and let ${\cal Y}$, $\tilde{{\cal Y}}$ be
$\alpha$-H\"older controlled rough paths with respect to ${\bf X}$,
$\tilde{{\bf X}}$ respectively. Let $u\in(a,b)$ be fixed. Then we
have
\begin{align}
d_{\mathbf{X},\tilde{\mathbf{X}};\alpha}\big(\mathcal{Y},\tilde{\mathcal{Y}}\big) & \leqslant d_{\mathbf{X},\tilde{\mathbf{X}};\alpha}\big(\mathcal{Y}|_{[u,b]},\tilde{\mathcal{Y}}|_{[u,b]}\big)+d_{\mathbf{X},\tilde{\mathbf{X}};\alpha}\big(\mathcal{Y}|_{[a,u]},\tilde{\mathcal{Y}}|_{[a,u]}\big)\big(1+\Vert\mathbf{X}\Vert_{\alpha}\big)\nonumber \\
 & \qquad+\Vert\tilde{\mathcal{Y}}|_{[a,u]}\Vert_{\tilde{\mathbf{X}};\alpha}\rho_{\alpha}({\bf X},\tilde{{\bf X}}).\label{eq:CtyPatch}
\end{align}
\end{lem}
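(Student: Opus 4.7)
The plan is to deduce both parts from a single algebraic identity: for every $0\leqslant i\leqslant N-1$ and every $s<u<t$ in $[a,b]$,
\begin{equation*}
\mathcal{RY}^{i}_{s,t} = \mathcal{RY}^{i}_{s,u} + \mathcal{RY}^{i}_{u,t} + \sum_{j=1}^{N-1-i} \mathcal{RY}^{i+j}_{s,u}\, X^{j}_{u,t}.
\end{equation*}
To derive it, I would split $Y^{i}_{t}-Y^{i}_{s}$ as $(Y^{i}_{t}-Y^{i}_{u})+(Y^{i}_{u}-Y^{i}_{s})$ and apply the controlled expansion on $[u,b]$ and $[a,u]$ to each piece, then further expand every $Y^{i+j}_{u}$ appearing in the first piece around the base point $s$ via the controlled property on $[a,u]$. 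Chen's relation $X^{j}_{s,t}=\sum_{k=0}^{j} X^{j-k}_{s,u}\otimes X^{k}_{u,t}$ produces, after reindexing the cross terms, exactly the sum $\sum_{l,k\geqslant 1,\, l+k\leqslant N-1-i} Y^{i+k+l}_{s}\, X^{l}_{s,u}\otimes X^{k}_{u,t}$ arising in the expansion, so all $Y$-valued contributions cancel and only the three remainders survive.

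Part (i) is then immediate: using $(u-s)^{(N-i-j)\alpha}(t-u)^{j\alpha}\leqslant(t-s)^{(N-i)\alpha}$ in the identity gives, for $s<u<t$,
\begin{equation*}
|\mathcal{RY}^{i}_{s,t}| \leqslant \Bigl(\|\mathcal{RY}^{i}\|_{[a,u],(N-i)\alpha}+\|\mathcal{RY}^{i}\|_{[u,b],(N-i)\alpha}+\sum_{j=1}^{N-1-i}\|\mathcal{RY}^{i+j}\|_{[a,u]}\|X^{j}\|_{j\alpha}\Bigr)(t-s)^{(N-i)\alpha},
\end{equation*}
and when $(s,t)$ lies entirely in one of the two subintervals the bound is trivial. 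This shows $\mathcal{Y}\in\mathcal{D}_{\mathbf{X};\alpha}(U)$ on $[a,b]$.

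For part (ii) I would apply the identity both to $\mathcal{Y}$ and to $\tilde{\mathcal{Y}}$ and subtract. Decomposing the cross term via $ab-\tilde a\tilde b=(a-\tilde a)b+\tilde a(b-\tilde b)$ yields
\begin{equation*}
\delta\mathcal{R}^{i}_{s,t} = \delta\mathcal{R}^{i}_{s,u} + \delta\mathcal{R}^{i}_{u,t} + \sum_{j=1}^{N-1-i} \bigl(\delta\mathcal{R}^{i+j}_{s,u}\, X^{j}_{u,t} + \mathcal{R}\tilde{\mathcal{Y}}^{i+j}_{s,u}\,\delta X^{j}_{u,t}\bigr).
\end{equation*}
Dividing by $(t-s)^{(N-i)\alpha}$, estimating each factor as in part (i), taking the supremum, and finally summing over $0\leqslant i\leqslant N-1$, the double sum $\sum_{i}\sum_{j}\|\delta\mathcal{R}^{i+j}\|_{[a,u]}\|X^{j}\|_{j\alpha}$ collapses under the reindexing $k=i+j$ to at most $\|\mathbf{X}\|_{\alpha}\cdot d_{\mathbf{X},\tilde{\mathbf{X}};\alpha}(\mathcal{Y}|_{[a,u]},\tilde{\mathcal{Y}}|_{[a,u]})$ (since for each $k$ the inner sum $\sum_{j=1}^{k}\|X^{j}\|_{j\alpha}\leqslant\|\mathbf{X}\|_{\alpha}$), while the remaining double sum is bounded by $\|\tilde{\mathcal{Y}}|_{[a,u]}\|_{\tilde{\mathbf{X}};\alpha}\rho_{\alpha}(\mathbf{X},\tilde{\mathbf{X}})$. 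This yields exactly (\ref{eq:CtyPatch}).

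The only substantive step is the derivation of the midpoint identity itself; it is the one place where Chen's relation and the precise definition of $\mathcal{RY}^{i}$ must be combined carefully, but once the reindexing is lined up the cancellation is clean. Everything downstream is norm bookkeeping using $(u-s)^{a}(t-u)^{b}\leqslant(t-s)^{a+b}$ and the standard telescoping trick.
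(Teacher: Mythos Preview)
Your proposal is correct and follows essentially the same route as the paper: the paper derives the equivalent identity $\mathcal{R}\mathcal{Y}_{s,t}^{k}=\mathcal{R}\mathcal{Y}_{u,t}^{k}+\sum_{j=k}^{N-1}\mathcal{R}\mathcal{Y}_{s,u}^{j}X_{u,t}^{j-k}$ (your three-term formula with the $j=0$ term absorbed via $X^{0}_{u,t}=1$) by expanding $\sum_{i=k}^{N-1}Y_{s}^{i}X_{s,t}^{i-k}$ through Chen's relation, and then estimates the difference exactly as you do. Your reindexing argument for the double sum in part~(ii) is slightly more explicit than the paper's, but the substance is identical.
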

\begin{proof}
(i) It is enough to consider the remainder ${\cal RY}_{s,t}^{k}$
when $s<u<t.$ Note that 
\begin{align*}
\sum_{i=k}^{N-1}Y_{s}^{i}X_{s,t}^{i-k} & =\sum_{i=k}^{N-1}Y_{s}^{i}\sum_{j=0}^{i-k}X_{s,u}^{i-k-j}X_{u,t}^{j}=\sum_{i=k}^{N-1}Y_{s}^{i}\sum_{j=k}^{i}X_{s,u}^{i-j}X_{u,t}^{j-k}\\
 & =\sum_{j=k}^{N-1}\big(\sum_{i=j}^{N-1}Y_{s}^{i}X_{s,u}^{i-j}\big)X_{u,t}^{j-k}\\
 & =\sum_{j=k}^{N-1}Y_{u}^{j}X_{u,t}^{j-k}-\sum_{j=k}^{N-1}\big(Y_{u}^{j}-\sum_{i=j}^{N-1}Y_{s}^{i}X_{s,u}^{i-j}\big)X_{u,t}^{j-k}.
\end{align*}
Therefore, 
\begin{align*}
Y_{t}^{k}-\sum_{i=k}^{N-1}Y_{s}^{i}X_{s,t}^{i-k} & =Y_{t}^{k}-\sum_{j=k}^{N-1}Y_{u}^{j}X_{u,t}^{j-k}+\sum_{j=k}^{N-1}\big(Y_{u}^{j}-\sum_{i=j}^{N-1}Y_{s}^{i}X_{s,u}^{i-j}\big)X_{u,t}^{j-k},
\end{align*}
or equivalently 
\begin{equation}
\mathcal{R}\mathcal{Y}_{s,t}^{k}=\mathcal{R}\mathcal{Y}_{u,t}^{k}+\sum_{j=k}^{N-1}\mathcal{R}\mathcal{Y}_{s,u}^{j}X_{u,t}^{j-k}.\label{eq:ControlledConcatenation}
\end{equation}
From (\ref{eq:ControlledConcatenation}), it is clear that the H\"oler
regularity of ${\cal R}{\cal Y}_{s,t}^{k}$ is $|t-s|^{(N-k)\alpha}$.

(ii) According to (\ref{eq:ControlledConcatenation}), for $s<u<t$
we also have 
\begin{align*}
 & \big|\mathcal{R}\mathcal{Y}_{s,t}^{k}-\mathcal{R}\tilde{\mathcal{Y}}_{s,t}^{k}\big|\\
 & \leqslant\big|\mathcal{R}\mathcal{Y}_{u,t}^{k}-\mathcal{R}\tilde{\mathcal{Y}}_{u,t}^{k}\big|+\sum_{j=k}^{N-1}\big|\mathcal{R}\mathcal{Y}_{s,u}^{j}-\mathcal{R}\tilde{\mathcal{Y}}_{s,u}^{j}\big|\|X^{j-k}\|_{\alpha}\left(t-u\right)^{(j-k)\alpha}\\
 & \ \ \ +\sum_{j=k}^{N-1}\big|\mathcal{R}\tilde{\mathcal{Y}}_{s,u}^{j}\big|\Vert X^{j-k}-\tilde{X}^{j-k}\Vert_{\alpha}(t-u)^{(j-k)\alpha}\\
 & \leqslant d_{\mathbf{X},\tilde{\mathbf{X}};\alpha}\big(\mathcal{Y}|_{[u,b]},\tilde{\mathcal{Y}}|_{[u,b]}\big)(t-u)^{(N-k)\alpha}\\
 & \ \ \ +\sum_{j=k}^{N-1}\Vert\mathcal{R}\mathcal{Y}^{j}|_{[a,u]}-\mathcal{R}\tilde{\mathcal{Y}}^{j}|_{[a,u]}\Vert_{(N-j)\alpha}(u-s)^{(N-j)\alpha}\Vert\mathbf{X}\Vert_{\alpha}(t-u)^{(j-k)\alpha}\\
 & \ \ \ +\sum_{j=k}^{N-1}\Vert\mathcal{R}\tilde{\mathcal{Y}}^{j}|_{[a,u]}\Vert_{(N-j)\alpha}\rho_{\alpha}({\bf X},\tilde{{\bf X}})(u-s)^{(N-j)\alpha}(t-u)^{(j-k)\alpha}.
\end{align*}
The inequality (\ref{eq:CtyPatch}) thus follows.
\end{proof}

\subsubsection{Global existence, uniqueness and continuity}

By patching local solutions and local estimates, we are now able to
establish the global well-posedness of the RDE (\ref{eq:RDE}) in
the space of controlled rough paths. Let $\alpha,\beta,N,F$ be given
as before.

\begin{proof}[Proof of Theorem \ref{thm:RDESol}]\textit{Existence}.
Let $\tau$ be given by Lemma \ref{lem:LocalExistenceAndUniqueness}.
According to Lemma \ref{lem:LocalExistenceAndUniqueness}, we have
a solution $\mathcal{Y}[1]$ on $[0,\tau]$ satisfying 
\begin{align*}
Y[1]_{t}^{0} & =Y_{0}+\int_{0}^{t}F(Y[1])\mathrm{d}X,\ Y[1]_{t}^{i}=\big[F(\mathcal{Y}[1])\big]_{t}^{i-1}\qquad\forall t\in[0,\tau].
\end{align*}
We define a sequence of controlled paths $\{{\cal Y}[n]:n\geqslant1\}$
on $[0,\tau]$ inductively in the following way. By applying Lemma
\ref{lem:LocalExistenceAndUniqueness} with $Y_{0}={\cal Y}[n-1]_{\tau}$
and $\mathbf{X}_{t}={\bf X}_{(n-1)\tau+t}$, we obtain a controlled
rough path $\mathcal{Y}[n]$ on $[0,\tau]$ satisfying 
\begin{align*}
Y[n]_{t}^{0} & =Y[n-1]_{\tau}^{0}+\int_{0}^{t}F(Y[n])\mathrm{d}X,\ Y[n]_{t}^{i}=\big[F({\cal Y}[n])\big]_{t}^{i-1}\ \ \ \forall t\in[0,\tau].
\end{align*}
We now define $\mathcal{Y}=(Y^{0},\ldots,Y^{N-1})$ as a path on $[0,\infty)$
by concatenating all the ${\cal Y}[n]$'s, namely
\[
\mathcal{Y}_{(n-1)\tau+t}=\mathcal{Y}[n]_{t},\qquad t\in[0,\tau].
\]
Note from Remark \ref{rem:Y0DetYi} that ${\cal Y}$ is well defined.
By Lemma \ref{lem:PatchingOfControlledNorm}, $\mathcal{Y}$ is a
controlled rough path with respect to ${\bf X}$.

For any $t\geqslant0,$ if $t\in[(n-1)\tau,n\tau]$ we have 
\begin{align*}
Y_{t}^{i} & =Y[n]_{t-(n-1)\tau}^{i}=F({\cal Y}[n])_{t-(n-1)\tau}^{i-1}=F({\cal Y})_{t}^{i-1}.
\end{align*}
It remains to show that, 
\begin{equation}
Y_{t}^{0}=Y_{0}+\int_{0}^{t}F(Y)\mathrm{d}X\qquad\forall t\geqslant0.\label{eq:0Level}
\end{equation}
We use induction on $n$. If $t\in[(n-1)\tau,n\tau]$, then
\begin{align*}
Y_{t}^{0} & =Y[n-1]_{t-(n-1)T}^{0}\\
 & =Y[n-1]_{0}^{0}+\int_{0}^{t-(n-1)\tau}F(Y[n-1]_{\cdot})\mathrm{d}X_{\cdot+(n-1)\tau}\\
 & =Y_{0}+\int_{0}^{(n-1)\tau}F(Y)\mathrm{d}X+\int_{0}^{t-(n-1)\tau}F(Y[n-1]_{\cdot})\mathrm{d}X_{\cdot+(n-1)\tau}\\
 & =Y_{0}+\int_{0}^{(n-1)\tau}F(Y)\mathrm{d}X+\int_{(n-1)\tau}^{t}F(Y)\mathrm{d}X\\
 & =Y_{0}+\int_{0}^{t}F(Y)\mathrm{d}X,
\end{align*}
where the third equality follows from induction hypothesis. Therefore,
(\ref{eq:0Level}) holds. We have thus obtained the existence of solution
on $[0,\infty)$.

\textit{Uniqueness}. Let $\mathcal{Y}^{1}$ and $\mathcal{Y}^{2}$
be two solutions to the RDE (\ref{eq:RDE}). Suppose that
\[
\sigma=\sup\{t\in[0,\infty):\mathcal{Y}_{s}^{1}=\mathcal{Y}_{s}^{2}\quad\forall s\in[0,t]\}<\infty
\]
Then $\mathcal{Y}_{\sigma}^{1}=\mathcal{Y}_{\sigma}^{2}=(Y_{\sigma}^{0},\ldots,Y_{\sigma}^{N-1})$.
According to Lemma \ref{lem:ContractionEstimates} with ${\bf X}=\tilde{{\bf X}}$,
for all $\tau$ sufficiently small, we have
\begin{align*}
 & d_{\mathbf{X},\mathbf{X};\alpha}\big(\mathcal{Y}^{1}|_{[\sigma,\sigma+\tau]},\mathcal{Y}^{2}|_{[\sigma,\sigma+\tau]}\big)\\
 & \leqslant\max\big(\tau^{\alpha},\tau^{\beta-\alpha}\big)M\big(\tau,\Vert F\Vert_{\text{Lip}_{N+1}},\max_{1\leqslant i\leqslant N-1}\big|(Y^{1})_{\sigma}^{i}\big|,\max_{1\leqslant i\leqslant N-1}\big|(Y^{2})_{\sigma}^{i}\big|,\\
 & \ \ \ \Vert\mathcal{Y}^{1}\Vert_{\mathbf{X};\alpha},\Vert\mathcal{Y}^{2}\Vert_{\mathbf{X};\alpha},\Vert\mathbf{X}\Vert_{\beta}\big)d_{\mathbf{X},\mathbf{X};\alpha}\big(\mathcal{Y}^{1}|_{[\sigma,\sigma+\tau]},\mathcal{Y}^{2}|_{[\sigma,\sigma+\tau]}\big).
\end{align*}
 If we choose $\tau$ to be such that 
\begin{align*}
 & \max\big(\tau^{\alpha},\tau^{\beta-\alpha}\big)M\big(\tau,\Vert F\Vert_{\text{Lip}_{N+1}},\max_{1\leqslant i\leqslant N-1}\big|(Y^{1})_{\sigma}^{i}\big|,\max_{1\leqslant i\leqslant N-1}\big|(Y^{2})_{\sigma}^{i}\big|,\\
 & \ \ \ \ \ \Vert\mathcal{Y}^{1}|_{[\sigma,\sigma+\tau]}\Vert_{\mathbf{X};\alpha},\Vert\mathcal{Y}^{2}|_{[\sigma,\sigma+\tau]}\Vert_{\mathbf{X};\alpha},\Vert\mathbf{X}\Vert_{\beta}\big)<1,
\end{align*}
then 
\[
d_{\mathbf{X},\mathbf{X};\alpha}\big(\mathcal{Y}^{1}|_{[\sigma,\sigma+\tau]},\mathcal{Y}^{2}|_{[\sigma,\sigma+\tau]}\big)=0.
\]
Since ${\cal Y}_{\sigma}^{1}={\cal Y}_{\sigma}^{2}$, this implies
that $\mathcal{Y}^{1}|_{[\sigma,\sigma+\tau]}=\mathcal{Y}^{2}|_{[\sigma,\sigma+\tau]}$
, which contradicts the definition of $\sigma.$ Therefore, ${\cal Y}^{1}={\cal Y}^{2}$
on $[0,\infty).$

\textit{Continuity estimate}. We now assume that all underlying paths
are defined on a given fixed interval $[0,T].$ To establish (\ref{eq:CtyEstRDE}),
it is equivalent to showing that 
\begin{equation}
d_{\mathbf{X},\tilde{\mathbf{X}};\alpha}\big(\mathcal{Y},\tilde{\mathcal{Y}}\big)\leqslant M\big(T,\Vert F\Vert_{\text{Lip-}(N+1)},B\big)\big(\sum_{i=0}^{N-1}\big|Y_{0}^{i}-\tilde{Y}_{0}^{i}\big|+\rho_{\beta}({\bf X},\tilde{{\bf X}})\big).\label{eq:PreCty}
\end{equation}
Indeed, observe that
\[
\sum_{i=0}^{N-1}\big|Y_{0}^{i}-\tilde{Y}_{0}^{i}\big|\leqslant M(\|F\|_{\text{Lip-}(N)})\big|Y_{0}^{0}-\tilde{Y}_{0}^{0}\big|,
\]
which is clear since all the $Y_{0}^{i}$'s and $\tilde{Y}_{0}^{i}$'s
are canonically determined by $Y_{0}^{0}$ and $\tilde{Y}_{0}^{0}$
via the relation (\ref{eq:InductiveW}). Therefore, (\ref{eq:CtyEstRDE})
follows from (\ref{eq:PreCty}).

To establish (\ref{eq:PreCty}), according to Lemma \ref{lem:ContractionEstimates}
and the fact that ${\cal Y},\tilde{{\cal Y}}$ are RDE solutions,
for any $\tau>0$ we have 
\begin{align*}
d_{\mathbf{X},\tilde{\mathbf{X}};\alpha}\big(\mathcal{Y},\tilde{\mathcal{Y}}\big)\leqslant & \max\big(\tau^{\alpha},\tau^{\beta-\alpha}\big)M\big(\tau,\Vert F\Vert_{\text{Lip-}(N+1)},\max_{1\leqslant i\leqslant N-1}\big|Y_{0}^{i}\big|,\max_{1\leqslant i\leqslant N-1}\big|\tilde{Y}_{0}^{i}\big|,\\
 & \Vert\mathcal{Y}\Vert_{\mathbf{X};\alpha},\Vert\tilde{\mathcal{Y}}\Vert_{\mathbf{X};\alpha},\Vert\mathbf{X}\Vert_{\beta},\Vert\tilde{\mathbf{X}}\Vert_{\beta}\big)\big(d_{\mathbf{X},\tilde{\mathbf{X}};\alpha}\big(\mathcal{Y},\tilde{\mathcal{Y}}\big)+\rho_{\beta}({\bf X},\tilde{{\bf X}})\\
 & +\max_{0\leqslant i\leqslant N-1}\Vert Y_{0}^{i}-\tilde{Y}_{0}^{i}\Vert\big)
\end{align*}
when restricted on $[0,\tau].$ Let $\tau_{B}$ be given by (\ref{eq:ChoosingT})
with $\Vert\mathbf{X}\Vert_{\beta}$ replaced by $B$. From (\ref{eq:Y_0Bound})
we have 
\begin{equation}
\max_{1\leqslant i\leqslant N-1}\big|Y_{0}^{i}\big|\vee\big|\tilde{Y}_{0}^{i}\big|\leqslant M\big(\Vert F\Vert_{\text{Lip-}(N-1)}\big)\label{eq:Y_0NormAgain}
\end{equation}
and from (\ref{eq:BoundOnSolution}) we also have (restricted on $[0,\tau_{B}]$)
\begin{equation}
\Vert{\cal Y}]\|_{{\bf X};\alpha}\vee\Vert\tilde{\mathcal{Y}}\Vert_{\mathbf{X};\alpha}\leqslant1.\label{eq:GubNorm}
\end{equation}
We choose $\tau$ to be sufficiently small, so that $\tau<\tau_{B}$,
$T$ is an integer multiple of $\tau$ and
\begin{align*}
\delta & \triangleq\max\big(\tau^{\alpha},\tau^{\beta-\alpha}\big)\times M\big(\tau,\Vert F\Vert_{\text{Lip}_{N+1}},M\big(\Vert F\Vert_{\text{Lip}_{N-1}}\big),\\
 & \ \ \ \ \ M\big(\Vert F\Vert_{\text{Lip}_{N-1}}\big),1,1,B,B\big)<1.
\end{align*}
It is important to note that $\tau$ is independent of $Y_{0}$ and
$\tilde{Y}_{0}$. It follows that 
\begin{equation}
d_{\mathbf{X},\tilde{\mathbf{X}};\alpha}\big(\mathcal{Y},\tilde{\mathcal{Y}}\big)\leqslant\frac{\delta}{1-\delta}\big(\rho_{\beta}({\bf X},\tilde{{\bf X}})+\max_{0\leqslant i\leqslant N-1}\big|Y_{0}^{i}-\tilde{Y}_{0}^{i}\big|\big)\label{eq:CtyEst1stInt}
\end{equation}
on $[0,\tau]$. By using (\ref{eq:CtyEst1stInt}) on each sub-interval
$[(n-1)\tau,n\tau]$ ($1\leqslant n\leqslant T/\tau$), we arrive
at 
\begin{equation}
d_{\mathbf{X},\tilde{\mathbf{X}};\alpha}\big(\mathcal{Y},\tilde{\mathcal{Y}}\big)\leqslant\frac{\delta}{1-\delta}\big(\rho_{\beta}({\bf X},\tilde{{\bf X}})+\max_{0\leqslant i\leqslant N-1}\big|Y_{(n-1)\tau}^{i}-\tilde{Y}_{(n-1)\tau}^{i}\big|\big)\label{eq:CtySubInt}
\end{equation}
when restricted on $[(n-1)\tau,n\tau]$.

From (\ref{eq:GubNorm}) it is clear that
\[
\Vert{\cal Y}|_{[(n-1)\tau,n\tau]}\Vert_{{\bf X};\alpha}\vee\Vert\tilde{\mathcal{Y}}|_{[(n-1)\tau,n\tau]}\Vert_{\tilde{{\bf X}};\alpha}\leqslant1\ \ \ \forall n.
\]
In addition, according to Lemma \ref{lem: estimating delta Y_alpha},
for each $0\leqslant i\leqslant N-2$ we have (restricted on $[0,\tau]$)
\begin{align*}
\big|Y_{\tau}^{i}-\tilde{Y}_{\tau}^{i}\big|\leqslant & \big|Y_{0}^{i}-\tilde{Y}_{0}^{i}\big|+\tau^{\alpha}\Vert Y^{i}-\tilde{Y}^{i}\Vert_{\alpha}\\
\leqslant & \tau^{\alpha}M\big(\tau,\Vert\mathbf{X}\Vert_{\alpha},\Vert\tilde{\mathbf{X}}\Vert_{\alpha},\max_{i+1\leqslant j\leqslant N-1}\big|Y_{0}^{j}\big|,\max_{i+1\leqslant j\leqslant N-1}\Vert\mathcal{R}\mathcal{Y}^{N-j}\Vert_{j\alpha}\big)\\
 & \times\big(\rho_{\alpha}({\bf X},\tilde{{\bf X}})+\sum_{i=0}^{N-1}\big|Y_{0}^{i}-\tilde{Y}_{0}^{i}\big|+d_{\mathbf{X},\tilde{\mathbf{X}};\alpha}\big(\mathcal{Y},\tilde{\mathcal{Y}}\big)\big).
\end{align*}
In view of (\ref{eq:Y_0NormAgain}) and (\ref{eq:GubNorm}), we can
further write 
\begin{equation}
\big|Y_{\tau}^{i}-\tilde{Y}_{\tau}^{i}\big|\leqslant M\big(\tau,\Vert F\Vert_{\text{Lip-}(N+1)},B\big)\big(\sum_{i=0}^{N-1}\big|Y_{0}^{i}-\tilde{Y}_{0}^{i}\big|+\rho_{\beta}({\bf X},\tilde{{\bf X}})\big).\label{eq:OneStepHolderNorm-1}
\end{equation}
By applying (\ref{eq:OneStepHolderNorm-1}) iteratively, we have 
\begin{equation}
\big|Y_{n\tau}^{i}-\tilde{Y}_{n\tau}^{i}\big|\leqslant M_{n}\big(\tau,\Vert F\Vert_{\text{Lip-}(N+1)},B\big)\big(\sum_{i=0}^{N-1}\big|Y_{0}^{i}-\tilde{Y}_{0}^{i}\big|+\rho_{\beta}({\bf X},\tilde{{\bf X}})\big)\label{eq:YntauDif}
\end{equation}
for all $n,$ where the increasing function $M_{n}$ can depend on
$n$.

To proceed further, we show by induction that 
\begin{equation}
d_{\mathbf{X},\tilde{\mathbf{X}};\alpha}\big(\mathcal{Y}|_{[0,n\tau]},\tilde{\mathcal{Y}}|_{[0,n\tau]}\big)\leqslant M_{n}\big(\tau,\Vert F\Vert_{\text{Lip-}(N+1)},B\big)\big(\sum_{i=0}^{N-1}\big|Y_{0}^{i}-\tilde{Y}_{0}^{i}\big|+\rho_{\beta}({\bf X},\tilde{{\bf X}})\big)\label{eq:Estntau}
\end{equation}
for each $1\leqslant n\leqslant T/\tau.$ Suppose that (\ref{eq:Estntau})
is true on $[0,(n-1)\tau]$. According to (\ref{eq:CtySubInt}) and
(\ref{eq:YntauDif}), we have
\begin{align}
 & d_{\mathbf{X},\tilde{\mathbf{X}};\alpha}\big(\mathcal{Y}|_{[(n-1)\tau,n\tau]},\tilde{\mathcal{Y}}|_{[(n-1)\tau,n\tau]}\big)\nonumber \\
 & \leqslant M\big(\tau,\Vert F\Vert_{\text{Lip-}(N+1)},B\big)\rho_{\beta}({\bf X},\tilde{{\bf X}})\nonumber \\
 & \ \ \ +M_{n-1}\big(\tau,\Vert F\Vert_{\text{Lip-}(N+1)},B\big)\big(\sum_{i=0}^{N-1}\big|Y_{0}^{i}-\tilde{Y}_{0}^{i}\big|+\rho_{\beta}({\bf X},\tilde{{\bf X}})\big)\nonumber \\
 & \leqslant M_{n}\big(\tau,\Vert F\Vert_{\text{Lip-}(N+1)},B\big)\big(\sum_{i=0}^{N-1}\big|Y_{0}^{i}-\tilde{Y}_{0}^{i}\big|+\rho_{\beta}({\bf X},\tilde{{\bf X}})\big).\label{eq:BddlastInt}
\end{align}
We can then apply Lemma \ref{lem:PatchingOfControlledNorm} to patch
the estimate on $[0,(n-1)\tau]$ with the one on $[(n-1)\tau,n\tau]$
given by (\ref{eq:BddlastInt}). This completes the induction step.
The estimate (\ref{eq:PreCty}) follows by taking $n=T/\tau$.

Now the proof of Theorem \ref{thm:RDESol} is complete.

\end{proof}
\begin{rem}
If the vector field $F$ and its derivatives are not uniformly bounded,
the solution to the RDE (\ref{eq:RDE}) may explode in finite time.
Similar discussion gives existence and uniqueness up to the explosion
time.
\end{rem}
\begin{rem}
In the continuity estimate (\ref{eq:CtyEstRDE}), it is possible to
also take into account the perturbation of the vector field $F$.
In this case, an extra term of $\|F-\tilde{F}\|_{\text{Lip-}(N+1)}$
will be included on the right hand side of (\ref{eq:CtyEstRDE}).
This extension is routine and for the sake of conciseness we do not
provide the details.
\end{rem}


\begin{thebibliography}{Dav08}
\bibitem[Dav08]{Dav08}A.M. Davie. Differential equations driven by
rough paths: an approach via discrete approximation. \textit{Appl.
Math. Res. Express.} AMRX 2008 (1): 1--40.

\bibitem[FV10]{FV10}P.K. Friz and N.B. Victoir. \textit{Multidimensional
stochastic processes as rough paths: theory and applications} (Vol.
120). Cambridge University Press, 2010.

\bibitem[Gub04]{Gub04}M. Gubinelli. Controlling rough paths. \textit{J.
Funct. Anal.} 216 (1) (2004): 86--140.

\bibitem[Gub10]{Gub10}M. Gubinelli. Ramification of rough paths.
\textit{J. Differential Equations} 248 (2010): 693--721.

\bibitem[FH14]{FH14}P.K. Friz and M. Hairer. \textit{A course on
rough paths}. Universitext, 2014.

\bibitem[Hai14]{Hai14}M. Hairer. A theory of regularity structures.
\textit{Invent. math.} 198 (2014): 269--504.

\bibitem[Lyo98]{Lyo98}T.J. Lyons. Differential equations driven by
rough signals. Rev. Mat. Iberoamericana 14 (2) (1998): 215--310.

\bibitem[LQ02]{LQ02}T.J. Lyons and Z. Qian, \textit{System control
and rough paths}. Oxford Mathematical Monographs, Oxford University
Press, 2002.

\bibitem[LY14]{LY14}T.J. Lyons and D. Yang. Integration of time-varying
cocyclic one-forms against rough paths. \textit{arXiv preprint} arXiv:1408.2785,
2014.

\bibitem[LT15]{LY15}T.J. Lyons and D. Yang. The theory of rough paths
via one-forms and the extension of an argument of Schwartz to rough
differential equations. \textit{J. Math. Soc. Japan} 67 (4) (2015):
1681--1703.

\bibitem[Mal78]{Mal78}P. Malliavin. Stochastic calculus of variations
and hypoelliptic operators. In Proc. Internat. Symposium on Stochastic
Differential Equations (1978): 195--263.

\bibitem[Oks13]{Oks13}B. Oksendal. \textit{Stochastic differential
equations: an introduction with applications.} Springer Science \&
Business Media, 2013.

\bibitem[Reu93]{Reu93}C. Reutenauer. \textit{Free Lie algebras}.
London Mathematical Society Monographs, 1993.

\bibitem[Ste70]{Ste70}E.M. Stein. \textit{Singular integrals and
differentiability properties of functions} (Vol. 2). Princeton University
Press, 1970.
\end{thebibliography}
\end{document}